\documentclass[a4paper, 12pt]{amsart}
\usepackage[T1]{fontenc}
\usepackage{textcomp }
\usepackage{amsmath}
\usepackage{amsfonts}
\usepackage[utf8]{inputenc}
\usepackage{stmaryrd}
\usepackage{tikz-cd}
\usepackage{amsmath, amssymb}
\usepackage{hyperref}
\usepackage[noabbrev]{cleveref}
\usepackage{amssymb}
\usepackage{amsthm}
\usepackage[backend=biber]{biblatex}
\usepackage{tikz}
\usetikzlibrary{cd}
\usepackage{cancel}
\usepackage{multicol}
\usepackage{geometry} 
\usepackage{parskip}
\usepackage{multicol}
\usepackage{xcolor}
\usepackage{ stmaryrd }
\usepackage[polish,english]{babel}
\usepackage{csquotes}

\newtheorem{thm}{Theorem}[subsection]

\newtheorem{claim}[thm]{Claim}
\newtheorem{fact}[thm]{Fact}
\newtheorem{proposition}[thm]{Proposition}
\newtheorem{cor}[thm]{Corollary}

\theoremstyle{definition}
\newtheorem{df}[thm]{Definition}

\newtheorem{quest}[thm]{Question}
\renewcommand{\thethm}{%
  \ifnum\value{subsection}=0 
    \thesection
  \else
    \thesubsection
  \fi%
  .\arabic{thm}}

\newcommand{\forces}{\Vdash}

\newcommand{\add}{\mathrm{add}}

\title{On Star operation and some ideals on the Baire space}

\author[A. Cieślak]{Aleksander Cieślak}
\email{aleksander.cieslak@pwr.edu.pl}

\author[D. Perkowska]{Daria Perkowska}
\email{daria.perkowska@pwr.edu.pl}

\author[Sz. Żeberski]{Szymon Żeberski}
\email{szymon.zeberski@pwr.edu.pl}

\date{}
\begin{document}

\maketitle
 
\begin{abstract}
We investigate several $\sigma$-ideals on the Baire space $\omega^\omega$
($\mathbb{Z}^\omega$), introducing and studying the ideals $\mathcal{G}$ and 
$\mathcal{SMZ}^+$, alongside the classical ideals of meager
sets, strong measure zero sets, the eventually different ideal and infinitely equal ideal  We establish structural relationships and proper inclusions
among these ideals.
 Also we compute the cardinal invariants of $\mathcal{M}_-$, proving that they are same as invariants of $\sigma$-ideal of meager sets.
We further analyze the operation $^*$ on families of
sets, establishing dual relationships such as $\mathcal{ED}^* = \mathcal{IE}$, $\mathcal{IE}^*=\mathcal{ED}$, $\mathcal{M}_-^*=\mathcal{SMZ}^+$ and
$\mathcal{H}^* = \mathcal{G}$, and derive separations between ideals under additional
set-theoretic assumptions.
Finally, we prove a tree dichotomy theorem for the ideal $\mathcal{M}_-$ and we study the associated forcing notion and its effects on cardinal invariants.
\end{abstract}
\section{Introduction}

We will be investigating subsets of the Cantor space $2^\omega$ and the Baire space $\omega^\omega$ and families of such subsets. The basis of the Cantor and Baire  space is formed by clopen sets:
$[\sigma]=\{x\in 2^\omega\text{ or } x\in\omega^\omega:\ \sigma\subseteq x\}$, where $\sigma\in 2^{<\omega}$ or $\sigma\in \omega^{<\omega}$.

\begin{df}
We say that a subset $\mathcal{I} \subseteq P(X)$ is an ideal if the following properties are satisfied:
\begin{itemize}
    \item $\emptyset \in \mathcal{I}$,
    \item When $A \in \mathcal{I}$ and $B \subseteq X$, then $B \subseteq A \Longrightarrow B \in \mathcal{I}$,
    \item If $A,B \in \mathcal{I}$ then $A \cup B \in \mathcal{I}$.
\end{itemize}
\end{df}

We say that a subset $\mathcal{I} \subseteq P(X)$ is a $\sigma$-ideal if it is an ideal and it is closed under countable unions.
    We say that an ideal $\mathcal{I}\subseteq\mathcal{P}(X) $ is proper if $X\notin\mathcal{I}$. In this paper we will only consider proper ideals.

Other important families are the family of sets of Lebesgue's measure zero, which we shall denote by $\mathcal{N}$, and the family of meager sets denoted by $\mathcal{M}$.  By $K_\sigma$ we denote the ideal of countable unions of compact sets in the Baire space $\omega^\omega$ or $\mathbb{Z}^\omega$.

For an ideal $\mathcal{I}$ of polish space $X$ we define standard cardinal coefficients:
\begin{itemize}
    \item $add(\mathcal{I})=\min \{|\mathcal{A}|:\ \mathcal{A} \subseteq \mathcal{I} \wedge \bigcup \mathcal{A} \notin \mathcal{I} \},$
    \item $cov(\mathcal{I})=\min \{|\mathcal{A}|: \mathcal{A} \subseteq \mathcal{I} \wedge \bigcup \mathcal{A} =X \},$
    \item $non(\mathcal{I})=min\{|A|: A\notin \mathcal{I}\}$,
    \item $cof(\mathcal{I})=\min \{|\mathcal{B}|: \mathcal{B} \subseteq \mathcal{I} \wedge (\forall A \in \mathcal{I})(\exists B \in \mathcal{B})(A \subseteq B) \}.$
\end{itemize}
These numbers are known as the \emph{additivity}, the \emph{covering}, the \emph{uniformity} and the \emph{cofinality} of the ideal $\mathcal{I}$.
For $f,g\in \omega^\omega$ (or $\mathbb{Z}^\omega$) let $f<^*g$ mean $\forall^\infty n f(n)<g(n).$
We say that family $F\subseteq\omega^\omega$ is \emph{unbounded} if there is no function $g\in\omega^\omega$ such that for ever $f\in F$ we have $f<^*g.$
We say that family $F\subseteq\omega^\omega$ is \emph{dominating} if for every function $g\in\omega^\omega$ we can find  $f\in F$  and we have $g<^*f.$  As $\exists^\infty n$ we mean that something happens for infinitely many $n$. 
 As $\forall^\infty n$ we mean that something happens for almost all $n$. 

Connected to those concepts we have next cardinal invariants:
\begin{itemize}
    \item $\mathfrak{b}= min \{ |F| : F \subseteq \omega^\omega \ \wedge F  \text{ is unbounded} \},$
    \item $\mathfrak{d}=min \{ |F| : F \subseteq \omega^\omega \ \wedge F \text{ is dominating} \}.
$
\end{itemize}

\begin{df}
    We say that $\bar{I}=(I_n)_{n<\omega}$ is an interval partition of $\omega$ if $\bigcup_{n<\omega} I_n=\omega$ and max $I_{n}=$ min $I_{n+1} -1$. By $\mathbb{IP}$ we denote all interval partitions of $\omega.$ 
\end{df}

\begin{df}
    We say that for interval partitions $\bar{I}=(I_n)_{n<\omega},\bar{J}=(J_n)_{n<\omega}$ we have $\bar{I}\sqsubseteq^* \bar{J}$ if for almost every $n$ there exists $k$ such that $I_k\subseteq J_n$.
\end{df}
We say that $F\subseteq\mathbb{IP}$ is \emph{unbounded} if there is no interval partition $\bar{I}\in\mathbb{IP}$ such that for every partition $\bar{J}\in F$ we have $\bar{J}\sqsubseteq^* \bar{I}.$
We say that $F\subseteq\mathbb{IP}$ is dominating if for every interval partition $\bar{I}\in\mathbb{IP}$ we can find $\bar{J}\in F$ such that $\bar{I}\sqsubseteq^* \bar{J}.$ 
We will also use another useful representation of invariants $\mathfrak{b}$ and $\mathfrak{d}$ using interval partitions of $\omega.$

\begin{thm}\label{XDXD} (Blass, \cite{Blass2010})
For interval partitions we have cardinal invariants:
\begin{itemize}
    \item $\mathfrak{b}=min\{|F|: F\subseteq IP  \wedge F\text{ is unbounded}\}$,
    \item $\mathfrak{d}=min\{|F|: F\subseteq IP\wedge F\text{ is dominating} \}$.
\end{itemize}
\end{thm}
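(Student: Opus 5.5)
We prove the two equalities by translating between functions and interval partitions in both directions, so that a bounding (dominating) family on one side yields one of the same size on the other. Two translations are needed. First, to a partition $\bar{I}=(I_n)$ we attach the function $f_{\bar I}(m)=\min I_{k+2}$, where $k$ is the index with $m\in I_k$. Second, to an increasing $g\in\omega^\omega$ we attach the partition $\bar J^g$ built by iteration: $j_0=0$, $j_{n+1}=g(j_n)+1$ (we may assume $g(m)>m$), and $J^g_n=[j_n,j_{n+1})$. We may restrict to increasing functions throughout, since every function is dominated by an increasing one and this changes neither $\mathfrak{b}$ nor $\mathfrak{d}$.

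\textbf{Step 1} (from partitions to functions). Suppose $f_{\bar I}\le^* g$ with $g$ increasing. We claim $\bar I\sqsubseteq^*\bar J^g$. Take $n$ large. Let $k$ be such that $j_n\in I_k$. Then $f_{\bar I}(j_n)=\min I_{k+2}\le g(j_n)<j_{n+1}$. Hence $I_{k+1}\subseteq[j_n,j_{n+1})=J^g_n$, so $J^g_n$ contains a whole interval of $\bar I$.

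\textbf{Step 2} (from functions to partitions). Given increasing $g$, we need a partition $\bar K$ such that $\bar K\sqsubseteq^*\bar I$ implies $g\le^* f_{\bar I}$, possibly after a harmless reindexing of $f_{\bar I}$. Take $\bar K$ with endpoints $k_0=0$, $k_{n+1}=g(k_n)+1$. Suppose each interval $I_m$ with $m$ large contains some $K_i$. Let $m\in I_a$ with $a$ large, and choose $K_i\subseteq I_{a+1}$. Then $k_i\ge\min I_{a+1}>m$, and
\[
g(m)\le g(k_i)<k_{i+1}\le \min I_{a+2}= f_{\bar I}(m).
\]
So $g\le^* f_{\bar I}$.

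\textbf{Step 3} (counting). If $F\subseteq\omega^\omega$ is unbounded, then $\{\bar J^g: g\in F\}$ is unbounded among partitions. Indeed, an $\bar I$ with $\bar J^g\sqsubseteq^*\bar I$ for all $g\in F$ would give, by Step 2 applied to each $g$ (where $\bar K=\bar J^g$), that $f_{\bar I}$ bounds $F$. Conversely, if $F$ is an unbounded family of partitions, then $\{f_{\bar I}:\bar I\in F\}$ is unbounded: a bound $g$ would give, by Step 1, $\bar I\sqsubseteq^*\bar J^g$ for all $\bar I\in F$. The same two maps show that a dominating family in one sense transfers to a dominating family in the other sense of the same cardinality. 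Together these give both equalities.

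The main obstacle is the index bookkeeping in Steps 1 and 2. The shift to $I_{k+2}$ in the definition of $f_{\bar I}$ is what guarantees that an entire interval fits inside one block. The "almost all $n$" clauses also need care: finitely many exceptional blocks must translate into finitely many exceptional arguments $m$, and this holds because the blocks have finite length.
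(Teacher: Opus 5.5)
Your proof is correct. Steps 1 and 2 check out: the shift to $I_{k+2}$ forces $I_{k+1}\subseteq J^g_n$, and Step 2 actually yields the strict $g(m)<f_{\bar I}(m)$, so the argument also works with the paper's $<^*$ convention. The four transfers in Step 3 then give both equalities (with families of at most the same cardinality, which is all that is needed). The paper gives no proof and simply cites Blass, and your argument is essentially Blass's standard translation between functions and interval partitions.
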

If $(X,+)$ is (an abelian) group then
for $A, B \in \mathcal{P}(X)$, 
we will write 
$$A+B= \{a+b: a \in A, b \in B\}.$$
Next we define operation $^*$ introduced by Seredyński in \cite{Seredyski1989SomeOR}.
\begin{df}
For a family $\mathcal{F}\subseteq \mathcal{P}(X)$, where $X$ is a Polish group let:
$$\mathcal{F}^{*}=\{A\subseteq X:\ \forall F\in \mathcal{F} \;\; A+F \neq X \}.$$
\end{df}

The following diagram, called Cichoń's diagram, summarizes all of the $ZFC$-provable inequalities between cardinal invariants of ideals $\mathcal{M}$, $\mathcal{N}$, $\mathfrak{b}$, and $\mathfrak{d}$.

\begin{center}
\begin{tikzpicture}[node distance=1cm, auto]
    \node (alef1) at (-6, 0) {$\aleph_1$};
    \node (addn) at (-4, 0) {$add(\mathcal{N})$};
    \node (addm) at (-1, 0) {$add(\mathcal{M})$};
    \node (covm) at (2, 0) {$cov(\mathcal{M})$};
    \node (nonn) at (5, 0) {$non(\mathcal{N})$};
    \node (b) at (-1, 2) {$\mathfrak{b}$};
    \node (d) at (2, 2) {$\mathfrak{d}$};
        \node (covn) at (-4, 4) {$cov(\mathcal{N})$};
        \node (nonm) at (-1, 4) {$non(\mathcal{M})$};
                \node (cofm) at (2, 4) {$cof(\mathcal{M})$};
        \node (cofn) at (5, 4) {$cof(\mathcal{N})$};
        \node (c) at (7, 4) {$\mathfrak{c}$};
    \draw[->] (alef1) -- (addn);
       \draw[->] (addn) -- (addm);
     \draw[->] (addm) -- (covm);
    \draw[->] (addm) -- (b);
    \draw[->] (addn) -- (covn);
    \draw[->] (covn) -- (nonm); 
    \draw[->] (b) -- (nonm);
    \draw[->] (b) -- (d);
    \draw[->] (nonm) -- (cofm);
    \draw[->] (covm) -- (d);
    \draw[->] (d) -- (cofm);
    \draw[->] (cofm) -- (cofn);
    \draw[->] (covm) -- (nonn);
    \draw[->] (nonn) -- (cofn);
    \draw[->] (cofn) -- (c);

\end{tikzpicture}
\end{center}

\section{Ideals on the Baire space}

In this chapter, we will introduce some new ideals $\mathcal{G}$ and $ \mathcal{SMZ}^+$ in the Baire space. We will investigate the structural relationships and interactions between these ideals and other known ideals in the Baire space.

We will begin by recalling the characterization of meager sets in the Cantor space and their fundamental properties.

\begin{df}
    For $x\in 2^\omega $ and $\bar{I}\in \mathbb{IP}$ let  $M(x,\bar{I})=\{y: \forall^{\infty} n \;\;    x\upharpoonright I_n\neq y\upharpoonright I_n\}$.
\end{df}

\begin{thm}(\cite{BBB} Talagrand)\label{XD}

    \begin{itemize}
    \item For every $x\in 2^\omega$ and $\bar{I}\in \mathbb{IP} $ set $M(x,\bar{I})$ is meager,
    \item    for every meager set $M$ we have $M\subseteq M(x,\bar{I})$ for some $x\in 2^\omega$ and interval partition $\bar{I}$,

        \item  if $I\sqsubseteq^* J$ then $M(x,\bar{I})\subseteq M(x,\bar{J}),$
        \item if    $M(y,\bar{I})\subseteq M(z,\bar{J})$, then $  \bar{I}\sqsubseteq^*\bar{J},$
        \item $M(x,\bar{I})+y=M(x+y,\bar{I})$.
    \end{itemize}
\end{thm}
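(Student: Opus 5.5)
We prove the five items in turn. Addition on $2^\omega$ is coordinatewise mod $2$. Throughout, write $\bar I=(I_n)$ and $\bar J=(J_n)$.

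\emph{First item.} Write $M(x,\bar I)=\bigcup_{m}F_m$, where $F_m=\{y:\forall n\ge m\ x\upharpoonright I_n\neq y\upharpoonright I_n\}$. Each $F_m$ is closed, being an intersection of clopen sets. It has empty interior: given a basic set $[\sigma]$, choose $n\ge m$ with $\min I_n\ge|\sigma|$ and extend $\sigma$ to agree with $x$ on $I_n$; the resulting basic set misses $F_m$. So $M(x,\bar I)$ is meager.

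\emph{Second item; this is the main work.} Let $M\subseteq\bigcup_k C_k$, where the $C_k$ are closed nowhere dense and increasing. We build $\bar I$ and $x$ by recursion. Suppose $\max I_{n-1}=\ell_n-1$ has been fixed. Enumerate all $s\in 2^{\ell_n}$. Extend successively a finite block $t$ on $[\ell_n,\ell_{n+1})$ so that for each $s$, the set $[s^\frown t]$ misses $C_n$; this is possible by nowhere density, treating one $s$ at a time and lengthening $t$. Put $I_n=[\ell_n,\ell_{n+1})$ and $x\upharpoonright I_n=t$. Now let $y\in C_k$. If $y\upharpoonright I_n=x\upharpoonright I_n$ for some $n\ge k$, then $y\in[(y\upharpoonright\ell_n)^\frown t]$, which misses $C_n\supseteq C_k$, a contradiction. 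Hence $y$ agrees with $x$ on only finitely many $I_n$, so $C_k\subseteq M(x,\bar I)$. The only care needed is handling all $2^{\ell_n}$ prefixes at once.

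\emph{Third item.} Suppose that for almost every $n$ some $I_k\subseteq J_n$, and let $y\in M(x,\bar I)$, so $y$ differs from $x$ on $I_k$ for all $k\ge k_0$. For all large $n$, $J_n$ contains some $I_k$, and since the partitions are increasing, $k\ge k_0$ once $n$ is large. Hence $y\upharpoonright J_n\neq x\upharpoonright J_n$ for almost all $n$.

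\emph{Fourth item.} We argue by contraposition. Suppose that for infinitely many $n$, $J_n$ contains no $I_k$; then each such $J_n$ is covered by at most two consecutive intervals $I_k$ and $I_{k+1}$. Choose an infinite sparse set of such $n$ for which the corresponding covering pairs are pairwise disjoint and nonadjacent. Define $w$ to agree with $z$ on those $J_n$. On the remaining coordinates, make $w$ differ from $y$ on every $I_k$ not used in the covering pairs: such an $I_k$ is disjoint from the chosen $J_n$, so $w$ is free there. On a used $I_k$, $w$ need only differ from $y$ somewhere in $I_k$ outside the chosen $J_n$. If that is impossible, $I_k\subseteq J_n$, which contradicts the choice of $n$. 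So $w$ differs from $y$ on almost every $I_k$, giving $w\in M(y,\bar I)$, while $w$ agrees with $z$ on infinitely many $J_n$, so $w\notin M(z,\bar J)$. This contradicts the inclusion $M(y,\bar I)\subseteq M(z,\bar J)$.

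\emph{Fifth item.} This is immediate: $(w+y)\upharpoonright I_n=(x+y)\upharpoonright I_n$ if and only if $w\upharpoonright I_n=x\upharpoonright I_n$.
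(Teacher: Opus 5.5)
Your proof is correct. The paper gives no argument of its own here and simply cites Bartoszy\'nski--Judah; your treatment of the substantive second item, where you successively lengthen one block so that all $2^{\ell_n}$ prefixes are sent outside $C_n$ at once, is the standard proof from that source, and the other four items are routine verifications that you carry out correctly.
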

 Strong measure zero sets were defined originally on the real line, by Borel, but we can translate the definition   from $\mathbb{R}$ to $2^\omega$.

\begin{df}
    A set $A \subseteq 2^\omega$ has strong zero measure $(A\in \mathcal{SMZ})$ if for every sequence $(k_n)_{n<\omega}$ of natural numbers there exists a sequence $(\sigma_n)_{n<\omega}$ such that  $A\subseteq \bigcup [\sigma_n]$ and $|\sigma_n|=k_n$  for every $n<\omega$.
\end{df}

Next, we move to the characterization of strong measure zero sets in the Cantor space.

\begin{thm}(Bartoszyński, Judah \cite{BBB})\label{xdxdxd}
 Set $X\subseteq2^\omega$ has strong measure zero if and only if for every interval partition $\bar{I}\in \mathbb{IP}$ there exists $z\in 2^\omega$ such that:
 $$\forall x\in X\;\; \exists^\infty n\;\; x\upharpoonright I_n=z\upharpoonright I_n.$$
\end{thm}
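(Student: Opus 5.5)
We prove the two implications separately. In both directions we use the standard fact that $X\in\mathcal{SMZ}$ does not change if the covering condition is required only for a fast-growing sequence $(k_n)$: replacing $k_n$ by a larger value only shrinks the basic sets $[\sigma_n]$. Recall also that $[\sigma]$ with $|\sigma|=k$ has measure $2^{-k}$, so covering with lengths $k_n$ is covering with diameters $2^{-k_n}$.

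\emph{($\Leftarrow$).} Suppose the interval-partition condition holds and $(k_n)$ is given; we may assume $(k_n)$ is increasing. Choose an interval partition $\bar I$ with $\max I_n+1\ge k_m$ for all $m\le n$; for instance, let $I_n$ end at or beyond $\max_{m\le n}k_m$. Take $z$ as in the hypothesis, so every $x\in X$ agrees with $z$ on $I_n$ for infinitely many $n$. This condition only ties $x$ to $z$ on the block $I_n$, not on the initial segment below it. So we cover $X$ not by single initial segments of $z$ but by all strings that match $z$ on $I_n$, enumerating them along the sequence $(k_m)$. The number of strings of length $\max I_n+1$ that agree with $z$ on $I_n$ is $2^{\min I_n}$. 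We therefore need to reserve $2^{\min I_n}$ indices $m$ with $k_m\le \max I_n+1$ for block $n$. Build $\bar I$ recursively so that this is possible: reserve a block $B_n$ of $2^{\min I_n}$ fresh indices $m$, and then set $\max I_n\ge\max_{m\in B_n}k_m$. For $m\in B_n$, let $\sigma_m$ be the length-$k_m$ initial segment of one of the strings $\tau\in 2^{\max I_n+1}$ with $\tau\upharpoonright I_n=z\upharpoonright I_n$. Any $x\in X$ has some (indeed infinitely many) $n$ with $x\upharpoonright I_n=z\upharpoonright I_n$, so $x\upharpoonright(\max I_n+1)$ is one of these $\tau$ and $x\in[\sigma_m]$. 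Hence $X\in\mathcal{SMZ}$.

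\emph{($\Rightarrow$).} Let $X\in\mathcal{SMZ}$ and $\bar I$ be given. Split $\omega$ into infinitely many infinite sets $A_j$, e.g.\ $A_j=\{n:\ n\equiv j$ along the $j$-th column of a pairing$\}$. For each $j$, apply the strong measure zero property with lengths $k_n=\max I_{a}+1$, where $a$ is the $n$-th element of $A_j$ (after deleting finitely many). This gives strings $\sigma^j_a$ with $|\sigma^j_a|=\max I_a+1$ covering $X$, so every $x$ extends some $\sigma^j_a$ with $a\in A_j$. Define $z\upharpoonright I_a=\sigma^j_a\upharpoonright I_a$ for $a\in A_j$; the blocks are disjoint, so $z$ is well defined. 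Each $x\in X$ then matches $z$ on some $I_a$ with $a\in A_j$, for every $j$. These indices $a$ are distinct because the sets $A_j$ are disjoint, so $x$ matches $z$ on $I_a$ for infinitely many $a$.

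The main obstacle is the $(\Leftarrow)$ direction, specifically arranging the bookkeeping of $\bar I$ so that the exponentially many prefixes of each block fit into the prescribed lengths. The recursive construction sketched above handles this.
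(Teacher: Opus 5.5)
Your proof is correct in both directions. The paper gives no proof of this statement (it is quoted from \cite{BBB}), and your argument is the standard one. For $(\Rightarrow)$ you split $\omega$ into infinitely many infinite pieces, which turns single covers into infinitely many matches with $z$. For $(\Leftarrow)$ you reserve $2^{\min I_n}$ indices for the block $I_n$. This lets you assign lengths $k_m\le \max I_n+1$ to the finitely many strings of length $\max I_n+1$ that agree with $z$ on $I_n$.

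There is one presentational fix in $(\Leftarrow)$. The partition $\bar I$ is determined by the recursion from $(k_m)$ alone, so you should build it first and only then take $z$. Your first tentative choice of $\bar I$ should simply be dropped.

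Your proof also shows where the paper's Baire-space discussion comes from. The $(\Rightarrow)$ argument works verbatim in $\mathbb{Z}^\omega$ and gives $\mathcal{SMZ}\subseteq\mathcal{SMZ}^+$. The $(\Leftarrow)$ argument breaks there precisely because, when $\min I_n>0$, infinitely many strings in $\mathbb{Z}^{\max I_n+1}$ agree with $z$ on $I_n$. This is why the paper has to introduce $\mathcal{SMZ}^+$ as a separate ideal in the Baire space.
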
 

The relationship between strong measure zero sets and meager sets on the real line was first shown in \cite{galvin1973strong} by Galvin, Mycielski, and Solovay. Alternatively, this connection can be proven by using the  characterizations of these two ideals.

\begin{thm}(Galvin-Myicelski-Solovay \cite{galvin1973strong})
    $X\in\mathcal{SMZ}$ iff and only iff for all $H\in \mathcal{M}$  we have $X+H\neq 2^\omega$.
\end{thm}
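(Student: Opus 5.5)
We will prove the Galvin--Mycielski--Solovay theorem in $2^\omega$ directly from the two combinatorial characterizations above: Talagrand's description of meager sets (Theorem \ref{XD}) and the Bartoszyński--Judah description of strong measure zero sets (Theorem \ref{xdxdxd}). In $2^\omega$ addition is coordinatewise mod $2$, so $x=-x$ and $x+z=y$ iff $y+x=z$. We will use this freely.

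\emph{($\Rightarrow$)} Let $X\in\mathcal{SMZ}$ and $H\in\mathcal{M}$. By Theorem \ref{XD} there are $x_0\in 2^\omega$ and $\bar I\in\mathbb{IP}$ with $H\subseteq M(x_0,\bar I)$. It therefore suffices to find $y\notin X+M(x_0,\bar I)$. Apply Theorem \ref{xdxdxd} to $\bar I$ and obtain $z$ such that every $x\in X$ satisfies $\exists^\infty n\; x\upharpoonright I_n=z\upharpoonright I_n$. Put $y=z+x_0$ and suppose $y=x+h$ with $x\in X$ and $h\in M(x_0,\bar I)$. Then $h=y+x=x_0+(z+x)$. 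On the infinitely many $n$ with $x\upharpoonright I_n=z\upharpoonright I_n$ we get $h\upharpoonright I_n=x_0\upharpoonright I_n$. This contradicts $\forall^\infty n\; h\upharpoonright I_n\neq x_0\upharpoonright I_n$. Hence $X+H\neq 2^\omega$.

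\emph{($\Leftarrow$)} Assume $X+H\neq 2^\omega$ for every meager $H$, and fix $\bar I\in\mathbb{IP}$. Take $H=M(0,\bar I)$, which is meager by Theorem \ref{XD}, and pick $z\notin X+H$. For each $x\in X$ we have $z+x\notin M(0,\bar I)$, since otherwise $z=x+(z+x)\in X+H$. So for infinitely many $n$, $(z+x)\upharpoonright I_n=0\upharpoonright I_n$, that is, $x\upharpoonright I_n=z\upharpoonright I_n$. By Theorem \ref{xdxdxd}, $X\in\mathcal{SMZ}$.

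Neither direction has a real obstacle once the two characterizations are available. The only points needing care are the following.
\begin{itemize}
\item We use the translation identity $M(x,\bar I)+y=M(x+y,\bar I)$, or equivalently the direct computation above, together with the fact that subtraction equals addition in $2^\omega$.
\item The statement concerns $2^\omega$; the real-line version is the original GMS result. If one insisted on $\mathbb{R}$, the main difficulty would be transferring these characterizations through the binary expansion map, which is not a homomorphism. That step is the one we avoid by working in $2^\omega$ as the surrounding text indicates.
\end{itemize}
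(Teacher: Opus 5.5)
Your proof is correct and follows the route the paper takes. The paper only cites Galvin--Mycielski--Solovay for $2^\omega$ and remarks that it follows from the Talagrand and Bartoszy\'nski--Judah characterizations; it then carries out the same argument for the Baire-space analogue $\mathcal{SMZ}^+=\mathcal{M}_-^*$ (reduce to $H=M(0,\bar I)$ for one direction, use the point $z+x_0$ for the other), and your choice of $H=M(0,\bar I)$ exactly, so that $X+H=\bigcup_{x\in X}M(x,\bar I)$, is slightly cleaner than the paper's ``$H\subseteq M(y,\bar I)$'' phrasing.
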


In  \cite{Wohofsky}, Wohofsky showed that the Galvin-Mycielski-Solovay Theorem doesn't hold in the Baire space $(\mathbb{Z^\omega})$. He showed that $\mathcal{M}^*\subseteq\mathcal{SMZ}$, and that assuming the Continuum Hypothesis it is possible to construct a  set $X\in \mathcal{SMZ}\backslash \mathcal{M}^*.$ 

The characterization of meager sets was adapted to the Baire space in \cite{EEEE}. However, it turns out that in the Baire space the characterization given by \ref{XD}.

\begin{df}

Let $\mathcal{M}_-$ be a $\sigma$-ideal generated by the sets $M(x,\bar{I})$ for $x\in \mathbb{Z}^\omega$ and $\bar{I}\in \mathbb{IP}.$
\end{df}

Now we will rewrite definition and characterizations of  strong measure zero sets to the Baire space. 

\begin{df}
    A set $A \subseteq \mathbb{Z}^\omega$ has strong zero measure $(A\in \mathcal{SMZ})$ if for every sequence $(k_n)_{n<\omega}$ of natural numbers there exists a sequence $(\sigma_n)_{n<\omega}$ such that  $A\subseteq \bigcup [\sigma_n]$ and $|\sigma_n|=k_n$  for every $n<\omega$.
\end{df}

\begin{proposition}
    A set $X\subseteq \mathbb{Z}^\omega$ has strong measure zero if and only if for every increasing sequence $k_n\in \omega^\omega$ there exists $(\sigma_n)_{n<\omega}$ such that $\sigma_n\in \mathbb{Z}^{k_n}$ and:
     $$X\subseteq\{x:\exists ^\infty _n \;\; \sigma_n\subseteq x\}.$$
 \end{proposition}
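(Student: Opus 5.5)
Both directions reduce to the standard trick of splitting the index set into infinitely many infinite pieces, or of reindexing a given sequence of lengths. Throughout, $[\sigma]=\{x\in\mathbb{Z}^\omega:\sigma\subseteq x\}$.

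For the implication from right to left, assume $X$ satisfies the condition and let an arbitrary sequence $(k_n)_{n<\omega}$ of natural numbers be given. We may assume the $k_n$ are increasing, since replacing $k_n$ by $k'_n=\max_{i\le n}k_i+n$ only makes the cylinders smaller: a cover by strings of lengths $k'_n$ yields one by strings of lengths $k_n$, obtained by truncating each $\sigma_n$ to $\sigma_n\upharpoonright k_n$. The condition then gives $(\sigma_n)$ with $\sigma_n\in\mathbb{Z}^{k'_n}$ such that every $x\in X$ extends infinitely many $\sigma_n$. In particular every $x\in X$ lies in at least one $[\sigma_n]$, so $X\subseteq\bigcup_n[\sigma_n]$. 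Truncating to length $k_n$ finishes this direction.

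For the implication from left to right, assume $X\in\mathcal{SMZ}$ and fix an increasing sequence $(k_n)$. Partition $\omega$ into infinitely many infinite sets $A_m=\{a^m_0<a^m_1<\dots\}$, for example by the pairing function. For each $m$, apply the definition of $\mathcal{SMZ}$ to the length sequence $(k_{a^m_j})_{j<\omega}$. This gives strings $\tau^m_j$ with $|\tau^m_j|=k_{a^m_j}$ and $X\subseteq\bigcup_j[\tau^m_j]$. Now define $\sigma_{a^m_j}=\tau^m_j$. Since the $A_m$ partition $\omega$, this defines $\sigma_n$ for every $n$ with $\sigma_n\in\mathbb{Z}^{k_n}$. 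Every $x\in X$ lies in some $[\sigma_n]$ with $n\in A_m$, for each of the infinitely many $m$. These indices are distinct because the $A_m$ are disjoint, so $x$ extends $\sigma_n$ for infinitely many $n$, as required.

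No step is a real obstacle. The only points needing care are the two bookkeeping steps: reducing to an increasing sequence, which is needed because the definition of $\mathcal{SMZ}$ allows arbitrary sequences while the characterization only quantifies over increasing ones, and checking that the subsequences $(k_{a^m_j})_j$ are legitimate inputs to the definition. The latter is immediate, since the definition of $\mathcal{SMZ}$ accepts any sequence of natural numbers.
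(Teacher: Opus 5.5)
Your proof is correct and matches the paper's approach. The paper only says the proof is the same as in $2^\omega$, and that standard Cantor-space argument is what you give for $\mathbb{Z}^\omega$: split $\omega$ into infinitely many infinite pieces for the forward direction, and for the converse enlarge the lengths to a strictly increasing sequence, then truncate.
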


 \begin{proof}
    Proof is the same as in $2^\omega.$  
 \end{proof}

As it turns out, the characterization of strong measure zero sets provided in \ref{xdxdxd} does not hold in the Baire space, which we will demonstrate in Chapter 4. This motivates the following definition.

\begin{df}
 Set $X\in \mathcal{SMZ}^+$  if and only if for every interval partition $\bar{I}\in \mathbb{IP}$ there exists $z\in\mathbb{Z}^\omega$ such that:
 $$\forall x\in X\;\; \exists^\infty n\;\; x\upharpoonright I_n=z\upharpoonright I_n.$$
\end{df} 

Of course we have $\mathcal{SMZ} \subseteq \mathcal{SMZ}^+$. In
Chapter $4$ we will show that under additional assumptions
the reverse inclusion does not hold. We now present a version of the Galvin-Mycielski-Solovay theorem
for the Baire space in terms of the ideals introduced above.

\begin{thm}

    For $X\subseteq\mathbb{Z}^\omega$ we have  $X\in\mathcal{SMZ}^+\Longleftrightarrow $ for all $H\in \mathcal{M}_-$  we have $X+H\neq \mathbb{Z}^\omega$.
\end{thm}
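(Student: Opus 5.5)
We prove the two implications separately. Throughout we use that $\mathbb{Z}^\omega$ is a group under coordinatewise addition, and the translation identity $M(x,\bar I)+y=M(x+y,\bar I)$, which holds verbatim in $\mathbb{Z}^\omega$. We also use the observation that $y\notin M(x,\bar I)$ iff $\exists^\infty n\ y\upharpoonright I_n=x\upharpoonright I_n$.

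\textbf{($\Rightarrow$)} Let $X\in\mathcal{SMZ}^+$ and $H\in\mathcal{M}_-$.

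First, $H\subseteq\bigcup_k M(x_k,\bar I^k)$. We replace the countably many partitions by a single one. Choose $\bar J$ so that every $J_n$ contains, for each $k\le n$, some whole interval of $\bar I^k$; this is possible by choosing $J_n$ long enough. Then $\bar I^k\sqsubseteq^*\bar J$ for every $k$. The monotonicity item of the Talagrand theorem holds in $\mathbb{Z}^\omega$ with the same proof, so $M(x_k,\bar I^k)\subseteq M(x_k,\bar J)$.

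Next we build a partition $\bar K$ that lets us handle all $x_k$ at once. Let each $K_m$ be a union of $m+1$ consecutive intervals of $\bar J$, say $J_{n_m},\dots,J_{n_m+m}$. Apply $X\in\mathcal{SMZ}^+$ to $\bar K$ to get $z$ with $\forall x\in X\ \exists^\infty m\ x\upharpoonright K_m=z\upharpoonright K_m$.

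Now define $y\in\mathbb{Z}^\omega$ so that for each $m$ and each $k\le m$ we have
$$(z-y)\upharpoonright J_{n_m+k}\neq x_k\upharpoonright J_{n_m+k}.$$
For instance, set $y\upharpoonright J_{n_m+k}=z-x_k+1$ there, that is, change every coordinate.

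We claim $y\notin X+H$. Suppose $y=x+h$ with $x\in X$ and $h\in M(x_k,\bar J)$. For infinitely many $m\ge k$ we have $x\upharpoonright K_m=z\upharpoonright K_m$, hence $h=y-x$ agrees with $y-z$ on $J_{n_m+k}$. By the choice of $y$, $(y-z)\upharpoonright J_{n_m+k}\neq -x_k\upharpoonright J_{n_m+k}$. Here a sign adjustment is needed: define $y$ so that $y-z$ differs from $x_k$ on the relevant block, and we want $h$ to agree with $x_k$ there. So reverse the choice and take $y\upharpoonright J_{n_m+k}=z+x_k$. Then $h\upharpoonright J_{n_m+k}=x_k\upharpoonright J_{n_m+k}$ for infinitely many blocks, contradicting $h\in M(x_k,\bar J)$. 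Hence $X+H\neq\mathbb{Z}^\omega$.

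\textbf{($\Leftarrow$)} Suppose $X+H\neq\mathbb{Z}^\omega$ for all $H\in\mathcal{M}_-$, and let $\bar I$ be given. Take $H=M(0,\bar I)$ and pick $w\notin X+H$. Then for every $x\in X$ we have $w-x\notin M(0,\bar I)$, i.e. $\exists^\infty n\ (w-x)\upharpoonright I_n=0$. Thus $x\upharpoonright I_n=w\upharpoonright I_n$ infinitely often, and $z=w$ witnesses the definition of $\mathcal{SMZ}^+$.

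The main obstacle is the forward direction. The step that needs care is the amalgamation into a single partition together with the blocked partition $\bar K$, which lets one $z$ serve all countably many generators simultaneously. We also need to check that the monotonicity item of the Talagrand theorem transfers to $\mathbb{Z}^\omega$; its proof does not use that the space is $2^\omega$, so it does.
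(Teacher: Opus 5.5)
Your proof is correct and is essentially the paper's. For $(\Leftarrow)$ you take $H=M(0,\bar I)$ and a point outside $X+H$. For $(\Rightarrow)$ you apply $\mathcal{SMZ}^+$ to the partition and shift the witness $z$ by the center, which is the paper's choice $a=y+z$. The one real difference is that you amalgamate the countably many generators explicitly, via $\bar J$ and the block partition $\bar K$, with $y-z$ acting as a combined center. The paper instead simply assumes $H\subseteq M(y,\bar I)$ for a single generator and never justifies this in $\mathbb{Z}^\omega$, so your version is the more complete one. In a write-up, delete the false start with $z-x_k+1$, define $y\upharpoonright J_{n_m+k}=(z+x_k)\upharpoonright J_{n_m+k}$ directly, and fix $n_0=0$, $n_{m+1}=n_m+m+1$.
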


\begin{proof}
    $(\Longleftarrow)$ Assume $X+H\neq \mathbb{Z}^\omega $ for any $H\in \mathcal{M_-}$. Take an interval partition $\bar{I}\in \mathbb{IP}$ and take $H\subseteq M(y,\bar{I})$, by translating $H$ we may assume that $y$ is constant with value $0$ on every coordinate. Since $X+H\neq \mathbb{Z}^\omega$ there exists $z\notin X+H\subseteq\bigcup_{x\in X} M(x,\bar{I}).$ So for every $x\in X$ there exists infinitely many $n $ such that $z\upharpoonright I_n = x\upharpoonright I_n$. So we get
    $X \in\mathcal{SMZ}^+.$

    $(\Longrightarrow)$
Take any $X\in \mathcal{SMZ}^+$ and any $H\in \mathcal{M}_-$, where $H\subseteq M(y,\bar{I})$ for some $y\in \mathbb{Z}^\omega$ and $\bar{I}\in \mathbb{IP}.$ Since $X\in \mathcal{SMZ}^+$ we have $X\subseteq\{x: \exists^{\infty} n \;\; x\upharpoonright I_n =z\upharpoonright I_n  \}$. We have $X+H\subseteq \bigcup_{x\in X} M(y+x,\bar{I})$. We have to show that there exists $a\notin X+H$. Let $a=y+z$.
Assume that $y+z\in X+H$, then for almost all $n$ and some $x\in X$
$y+x \upharpoonright I_n \neq y+z\upharpoonright I_n$ , so $x\upharpoonright I_n\neq z\upharpoonright I_n$. 
We get a contradiction, because $X\in \mathcal{SMZ}^+$, so for infinitely many $n$ we have $x\upharpoonright I_n =z \upharpoonright I_n$.
\end{proof}

\begin{cor}
    In the Baire space we have ${\mathcal{SMZ}}^+=\mathcal{M}_-^*$.
\end{cor}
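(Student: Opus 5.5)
This is a direct unpacking of the definition of the operation ${}^*$ combined with the theorem just proved. By the definition of Seredyński's operation applied to the family $\mathcal{F}=\mathcal{M}_-$ in the Polish group $(\mathbb{Z}^\omega,+)$ we have
$$\mathcal{M}_-^{*}=\{X\subseteq \mathbb{Z}^\omega:\ \forall H\in\mathcal{M}_-\;\; X+H\neq\mathbb{Z}^\omega\}.$$
The preceding theorem states that for every $X\subseteq\mathbb{Z}^\omega$ one has $X\in\mathcal{SMZ}^+$ if and only if $X+H\neq\mathbb{Z}^\omega$ for all $H\in\mathcal{M}_-$. The right-hand side is exactly the membership condition for $\mathcal{M}_-^*$. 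So the two families have the same members, and the corollary follows by extensionality.

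In order, I would first write out $\mathcal{M}_-^*$ from the definition. Then I would prove the inclusion $\mathcal{SMZ}^+\subseteq\mathcal{M}_-^*$ using the direction $(\Longrightarrow)$ of the theorem, and the inclusion $\mathcal{M}_-^*\subseteq\mathcal{SMZ}^+$ using the direction $(\Longleftarrow)$.

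There is no real obstacle, because the theorem is quantified over all $X$ and all $H\in\mathcal{M}_-$. The one point I would double-check is that the theorem really covers arbitrary elements of the $\sigma$-ideal $\mathcal{M}_-$, not just its generators $M(y,\bar I)$.

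The proof of $(\Longrightarrow)$ starts from ``$H\subseteq M(y,\bar I)$'', which only handles subsets of a single generator. A general $H\in\mathcal{M}_-$ is contained in a countable union $\bigcup_k M(y_k,\bar I_k)$. To patch this, I would reduce such a union to a single generator. Choose one interval partition $\bar J$ that is $\sqsupseteq^*$ every $\bar I_k$ and whose intervals are long enough to absorb the first $k$ partitions. Combine the $y_k$ diagonally into a single $y$ by letting $y\upharpoonright J_n$ agree with $y_k$ on a subinterval of $J_n$ coming from $\bar I_k$, for $k\le n$. Then $\bigcup_k M(y_k,\bar I_k)\subseteq M(y,\bar J)$.

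With this reduction, the theorem applies to every $H\in\mathcal{M}_-$ as stated, and the equality $\mathcal{SMZ}^+=\mathcal{M}_-^*$ follows.
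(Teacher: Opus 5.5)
Your proposal is correct and matches the paper: the corollary is just the preceding theorem read through the definition of ${}^*$, and the paper gives no further argument. Your reduction of an arbitrary $H\in\mathcal{M}_-$ to a single $M(y,\bar J)$ is valid, provided that for $k\le n$ the pieces of $J_n$ you use are pairwise disjoint whole intervals of $\bar I_k$ on which $y$ copies $y_k$; it fills in a step the paper uses implicitly without proof.
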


The next ideal we consider is the eventually different ideal, which was studied  in \cite{KHOMSKII}, where a tree dichotomy for this ideal was established.

\begin{df}
    Let $\mathcal{ED}$ be $\sigma$-ideal $\sigma$-generated by $E_f$ where $f\in\mathbb{Z}^{\omega}$ and $$E_f=\{x\in\mathbb{Z}^{\omega}: \forall^{\infty}_n \;\; x(n)\neq f(n)\}.$$
\end{df}

The modern notion of porosity was formulated by J.Väisälä in \cite{PpP}, where he 
 proved that that every porous set in 
$\mathbb{R}^n$  has Hausdorff dimension strictly less than $n$.

\begin{df}
We say that a set $X\subseteq \mathbb{Z}^{\omega}$ is porous $(X\in \mathcal{P})$ if there exist $k$ such that for every $\sigma \in \mathbb{Z}^{m}$ there exists $\tau \in \mathbb{Z}^{m+k}$, $\sigma \subseteq \tau$ and $[\tau]\cap X=\emptyset.$   
  \end{df}

\begin{thm}
    There exists $X\in\mathcal{ED}$ such that $X\notin \mathcal{P}$.
    
\end{thm}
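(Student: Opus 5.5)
The plan is to take the simplest generator of $\mathcal{ED}$, namely $X=E_0$ where $0$ is the constant zero function:
$$E_0=\{x\in\mathbb{Z}^\omega:\ \forall^\infty n\;\; x(n)\neq 0\}\in\mathcal{ED},$$
and to show that $X$ does not belong to the family generated by porous sets under \emph{finite} unions. I will also explain why, with the definition of porosity given above, no member of $\mathcal{ED}$ can avoid the $\sigma$-ideal generated by porous sets. So the statement can only hold when $\mathcal{P}$ is read as the porous sets themselves, or as the ideal they generate under finite unions.

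\emph{Step 1: porous sets and their finite unions are nowhere dense.} Let $A$ be porous with constant $k$. Then $\overline{A}$ is porous with the same $k$: if $[\tau]\cap A=\emptyset$ then $[\tau]\cap\overline{A}=\emptyset$, since $[\tau]$ is open. Every basic set $[\sigma]$ contains a basic set $[\tau]$ missing $\overline{A}$, so $\overline{A}$ has empty interior. A finite union of nowhere dense sets is nowhere dense. Hence every set in the ideal generated by $\mathcal{P}$ is nowhere dense.

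\emph{Step 2: $E_0$ is dense.} Given any $\tau\in\mathbb{Z}^{m}$, the point $\tau^\frown(1,1,1,\dots)$ lies in $[\tau]\cap E_0$. Therefore $E_0$ is not nowhere dense. By Step 1 it is neither porous nor a finite union of porous sets, which proves the statement.

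\emph{The main obstacle is the $\sigma$-version, and it cannot be overcome.} Write $E_f=\bigcup_N C_N$, where
$$C_N=\{x:\ \forall n\ge N\;\; x(n)\neq f(n)\}.$$
Each $C_N$ is porous with constant $k=N+1$. Indeed, given $\sigma\in\mathbb{Z}^m$, extend it to some $\tau\in\mathbb{Z}^{m+N+1}$ with $\tau(j)=f(j)$ for some $j$ satisfying $N\le j<m+N+1$; such a $j$ always exists. Then $[\tau]\cap C_N=\emptyset$. Consequently every $E_f$, and hence every member of $\mathcal{ED}$, is $\sigma$-porous.

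So the argument in Steps 1--2 is the correct proof of the statement as worded, with $\mathcal{P}$ the family of porous sets. A claim that $\mathcal{ED}\not\subseteq\sigma\text{-}\mathcal{P}$ would be false under this definition of porosity. It would need a stronger porosity notion, for instance one that also bounds the values of $\tau$ on the new coordinates, since the construction above uses the unbounded value $f(j)$.
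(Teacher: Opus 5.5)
Your proof is correct and is essentially the paper's argument. The paper also takes $X=E_0$ and shows directly that every basic set $[\beta]$ meets $E_0$, using a point that agrees with $\beta$ and is constantly $1$ afterwards; you reach the same density observation through ``porous $\Rightarrow$ nowhere dense''. Your side remark is also right: each $C_N$ is porous with constant $N+1$, so $\mathcal{ED}$ lies inside the $\sigma$-porous sets, and the statement holds only with $\mathcal{P}$ read as the porous sets themselves, which is how the paper defines it.
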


\begin{proof}

    Assume $E_0$ is $k$-porous. Take  any $y\in \mathbb{Z}^n$. Take any $\beta\in\mathbb{Z}^{n+k}$ such that $y\subseteq \beta$. Take $x$ such that for $s\leq n+k+1$ we have $x(s)=\beta (s)$. And for $s>n+k+1$ let $x(s)=1$. Of course $x\in E_0$ and $x\in [\beta]$, so $E_0\notin\mathcal{P}.$    
\end{proof}

\begin{proposition}
    $\mathcal{ED}\subseteq\mathcal{M}_{-}$.
\end{proposition}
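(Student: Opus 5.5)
Since $\mathcal{M}_-$ is a $\sigma$-ideal and $\mathcal{ED}$ is $\sigma$-generated by the sets $E_f$, it is enough to show that each generator $E_f$, for $f\in\mathbb{Z}^\omega$, is contained in a single set $M(x,\bar{I})$ for suitable $x\in\mathbb{Z}^\omega$ and $\bar{I}\in\mathbb{IP}$. Countable unions of such sets then lie in $\mathcal{M}_-$, which gives $\mathcal{ED}\subseteq\mathcal{M}_-$.

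Fix $f\in\mathbb{Z}^\omega$. I take $x=f$ and the simplest partition, $\bar{I}$ with $I_n=\{n\}$, which is an interval partition of $\omega$. Then
$$M(f,\bar{I})=\{y:\ \forall^\infty n\ \ y\upharpoonright\{n\}\neq f\upharpoonright\{n\}\}=\{y:\ \forall^\infty n\ \ y(n)\neq f(n)\}=E_f,$$
so $E_f\in\mathcal{M}_-$ directly. More generally, for any $\bar{I}$ we have $E_f\subseteq M(f,\bar{I})$: if $y(n)\neq f(n)$ for all $n\geq N$, then for every $n$ with $\min I_n\geq N$ the restrictions $y\upharpoonright I_n$ and $f\upharpoonright I_n$ already differ. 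This shows the inclusion is robust to the choice of partition.

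There is no real obstacle here. The only point to check is that the degenerate partition into singletons is allowed by the definition of $\mathbb{IP}$. If one prefers to avoid it, the monotonicity argument just given shows that $E_f\subseteq M(f,\bar{I})$ holds for an arbitrary $\bar{I}$, so the conclusion is the same.
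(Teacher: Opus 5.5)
Your proof is correct and follows essentially the same route as the paper, which also reduces to generators and notes that $E_f\subseteq M(f,\bar{I})$ for any interval partition $\bar{I}$. Your observation that the partition into singletons, which the paper's definition of $\mathbb{IP}$ allows, even gives equality $M(f,\bar{I})=E_f$ is a small bonus.
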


\begin{proof}

    Take any set $E_f$. Then  for any partition $\bar{I}$ we get  $E_f\subseteq M(f, \bar{I})\in \mathcal{M}_-.$
\end{proof}

\begin{proposition}

    There exists $X\in \mathcal{M}_-$ such that $X\notin \mathcal{ED}.$
\end{proposition}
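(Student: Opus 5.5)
We will show that a single generator of $\mathcal{M}_-$ already lies outside $\mathcal{ED}$. Take $X=M(0,\bar{I})$, where $0$ is the constant zero sequence and $\bar{I}$ is the interval partition into consecutive pairs, $I_n=\{2n,2n+1\}$. By definition
$$X=\{x\in\mathbb{Z}^\omega:\ \forall^\infty n\ \ (x(2n),x(2n+1))\neq(0,0)\},$$
and $X\in\mathcal{M}_-$. We argue by contradiction: suppose $X\subseteq\bigcup_{k<\omega}E_{f_k}$ for some countable family $(f_k)_{k<\omega}\subseteq\mathbb{Z}^\omega$.

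The plan is to diagonalize and build $x\in X$ with $x\notin E_{f_k}$ for every $k$, i.e.\ $x(m)=f_k(m)$ for infinitely many $m$, for each $k$. Fix a bookkeeping function $k:\omega\to\omega$ taking every value infinitely often. At block $n$ we handle the function $f_{k(n)}$ and define
$$x(2n)=f_{k(n)}(2n),\qquad x(2n+1)=\begin{cases}1 & \text{if } f_{k(n)}(2n)=0,\\ 0 & \text{otherwise.}\end{cases}$$
Then $(x(2n),x(2n+1))\neq(0,0)$ for every $n$, since either the first coordinate is nonzero or the second coordinate equals $1$. Hence $x\in X$. Moreover, each $f_k$ agrees with $x$ at coordinate $2n$ for the infinitely many $n$ with $k(n)=k$, so $x\notin E_{f_k}$ for every $k$. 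This contradicts $X\subseteq\bigcup_k E_{f_k}$, and therefore $X\notin\mathcal{ED}$.

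The key idea is that intervals of length at least $2$ leave room inside each block both to match a given $f$ at one coordinate and to avoid the fixed block pattern $0\upharpoonright I_n$ at another. Once this is seen, the argument is a routine diagonalization against a countable family, which only requires verifying that the chosen $x$ avoids the block pattern in every block. This is the step to check carefully, and the case split in the definition of $x(2n+1)$ handles it.
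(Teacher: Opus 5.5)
Your proof is correct and is essentially the paper's argument. Both take the generator $M(0,\bar{I})$ and diagonalize against a putative countable cover $\bigcup_k E_{f_k}$, using one coordinate in each block to break the pattern $0\upharpoonright I_n$; the only difference is bookkeeping. The paper uses blocks with $|I_n|=n+2$ so that block $n$ matches $f_0,\dots,f_n$ at once, whereas your pairs with a function $k$ hitting each value infinitely often show that blocks of length $2$ already suffice.
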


\begin{proof}

    Let $X=M(0,\bar{I})$ where $\bar{I}$ is a partition into intervals such that $|I_n|=n+2$, say $I_n=\{i_0^n,_{\cdots}, i_{n+1}^n\}$. Assume there exists $(f_n)_{n<\omega}\subseteq \mathbb{Z}^\omega$ such that  $X\subseteq\bigcup_n E_{f_n}.$

We will construct $x\in X $ such that $x\notin \bigcup_n E_{f_n}.$ Let:
\begin{enumerate}
    \item $x(i_j^n)=f_j(i_j^n)$ for $j\leqslant n$,
    \item $x(i_{n+1}^n)=1$. 
\end{enumerate}
Of course we have $x\in X$ because of $(2)$ and $x\notin \bigcup_n E_{f_n}$ because of $(1).$
\end{proof}

The next ideal was studied in \cite{OtmarSpinas2008}.

\begin{df}
    Let $\mathcal{IE}$ be $\sigma$-ideal $\sigma$-generated by $I_g$ where $g\in \mathbb{Z}^\omega$ and $$I_g=\{x\in \mathbb{Z}^\omega:\exists^\infty n \;\; x(n)=g(n)\}.$$
\end{df}
 We will be also interested in the following ideal that was used by Bartoszyński and Judach in \cite{BBB} to characterize $add(\mathcal{M})$ and $cof(\mathcal{M}).$ 
\begin{df}
    Let $\mathcal{H}$ be $\sigma$-ideal generated by $H_{\bar{I},f}$ where $f\in\mathbb{Z}^{\omega}$, $\bar{I}\in \mathbb{IP} $ and $$H_{\bar{I},f}=\{x\in\mathbb{Z}^{\omega}: \forall^{\infty}n \exists m\in I_n \;\; x(m)= f(m)\}.$$
\end{df}

Note that $\forall (\bar{I}^n)_{n<\omega}\in \mathbb{IP}$ and $\forall(f_n)_{n<\omega}\in \mathbb{Z}^\omega$ there is $\bar{J}\in \mathbb{IP}$ and $g\in \mathbb{Z}^\omega$ such that $\bigcup_{n} H_{\bar{I}^n, f_n}\subseteq H_{\bar{J},g}$.

\begin{df}
 Set $X\in \mathcal{G}$  if and only if for every interval partition $\bar{I}\in \mathbb{IP}$ there exists $z\in\mathbb{Z}^\omega$ such that:
 $$\forall x\in X\;\; \exists^\infty n\;\; \forall m\in I_n \;\; x(m)\neq z(m).$$
\end{df} 

\begin{proposition}
    $\mathcal{H}\subseteq\mathcal{M}_-$.
\end{proposition}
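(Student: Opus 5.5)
Since $\mathcal{M}_-$ is a $\sigma$-ideal and $\mathcal{H}$ is $\sigma$-generated by the sets $H_{\bar{I},f}$, it suffices to show that each generator $H_{\bar{I},f}$ is contained in a single set of the form $M(x,\bar{J})$. The idea is to choose $x$ so that it disagrees with $f$ at \emph{every} coordinate. Then any $y\in H_{\bar{I},f}$, which eventually agrees with $f$ somewhere in each block, must eventually disagree with $x$ somewhere in each block.

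Concretely, fix $\bar{I}\in\mathbb{IP}$ and $f\in\mathbb{Z}^\omega$, and put $x=f+1$, i.e.\ $x(m)=f(m)+1$ for all $m$. We take $\bar{J}=\bar{I}$. Let $y\in H_{\bar{I},f}$. Then there is $N$ such that for every $n\geq N$ there exists $m\in I_n$ with $y(m)=f(m)$. Hence $y(m)\neq f(m)+1=x(m)$, so $y\upharpoonright I_n\neq x\upharpoonright I_n$ for all $n\geq N$. By definition this gives $y\in M(x,\bar{I})$. Therefore $H_{\bar{I},f}\subseteq M(x,\bar{I})$.

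Finally, any set in $\mathcal{H}$ is contained in a countable union of such generators. Each of these lies in $\mathcal{M}_-$, and $\mathcal{M}_-$ is closed under countable unions, so the set lies in $\mathcal{M}_-$. One could also use the remark that countable unions of generators of $\mathcal{H}$ are contained in a single $H_{\bar{J},g}$, but this is not needed.

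I expect no real obstacle here. The only point requiring care is choosing $x$ to differ from $f$ pointwise everywhere. Working in $\mathbb{Z}^\omega$ makes this trivial with the shift by $1$; in $\omega^\omega$ one would equally use $f+1$.
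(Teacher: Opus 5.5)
Your proposal is correct and follows the paper's proof: it shows $H_{\bar{I},f}\subseteq M(f+1,\bar{I})$ for each generator, because agreeing with $f$ somewhere in a block forces disagreeing with $f+1$ there. It then concludes by closure of $\mathcal{M}_-$ under countable unions, which the paper leaves implicit.
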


\begin{proof}
    For any $H_{\bar{I},f}\in \mathcal{H}$ we have $H_{\bar{I},f}\subseteq M(f+1,\bar{I}).$
\end{proof}

\begin{proposition}
    There exists $X\in \mathcal{M}_- $ such that $X\notin \mathcal{H}$.
\end{proposition}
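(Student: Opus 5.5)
The plan is to take the simplest possible generator of $\mathcal{M}_-$, namely $X=M(0,\bar{I})$ where $\bar{I}$ is the partition of $\omega$ into singletons $I_n=\{n\}$. For this partition we have
$$X=\{x\in\mathbb{Z}^\omega:\ \forall^\infty n\;\; x(n)\neq 0\}=E_0,$$
so $X\in\mathcal{M}_-$ by definition. It then remains to show that $X$ is not covered by countably many generators of $\mathcal{H}$.

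First, I would reduce to a single generator. Suppose towards a contradiction that $X\subseteq\bigcup_n H_{\bar{I}^n,f_n}$ for some interval partitions $\bar{I}^n$ and functions $f_n\in\mathbb{Z}^\omega$. By the remark following the definition of $\mathcal{H}$, there are $\bar{J}\in\mathbb{IP}$ and $g\in\mathbb{Z}^\omega$ with $\bigcup_n H_{\bar{I}^n,f_n}\subseteq H_{\bar{J},g}$. Hence it suffices to find some $x\in X\setminus H_{\bar{J},g}$.

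Next, I would build $x$ by avoiding two values at every coordinate. For each $m$, choose $x(m)\in\mathbb{Z}\setminus\{0,g(m)\}$, which is possible since $\mathbb{Z}$ has more than two elements. Since $x(m)\neq 0$ for all $m$, we get $x\in E_0=X$. Since $x(m)\neq g(m)$ for all $m$, no interval $J_n$ contains an $m$ with $x(m)=g(m)$. In particular it fails that for almost all $n$ some $m\in J_n$ satisfies $x(m)=g(m)$, so $x\notin H_{\bar{J},g}$. This gives the contradiction.

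I do not expect a serious obstacle here. The one point to check is the remark that countably many generators $H_{\bar{I}^n,f_n}$ are contained in a single $H_{\bar{J},g}$, which I take as given since the paper states it. Even without that remark the argument adapts directly: choose $x(m)$ different from $0$ and from $f_0(m),\dots,f_m(m)$. Then for each $k$, every $m\geq k$ has $x(m)\neq f_k(m)$, so any interval of $\bar{I}^k$ lying above $k$ contains no agreement with $f_k$. This shows $x\notin H_{\bar{I}^k,f_k}$ for every $k$, while still $x\in E_0$.
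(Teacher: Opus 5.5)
Your proposal is correct and is essentially the paper's proof. It uses the same witness $X=M(0,\bar{I})$ with $\bar{I}$ the partition into singletons, and the same coordinatewise choice $x(m)\notin\{0,g(m)\}$; the paper's explicit $x(n)=f(n)-1$, or $x(n)=2$ when $f(n)=1$, is one such choice. The only difference is that you state the reduction to a single generator $H_{\bar{J},g}$ via the remark after the definition of $\mathcal{H}$, which the paper uses tacitly, and you also give a valid variant that avoids that remark.
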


\begin{proof}
Let $X=M(0,\bar{I})$ where $\bar{I}$ is a partition into singletons. Assume there exists $\bar{J}\in\mathbb{IP},f\in \mathbb{Z}^\omega$ such that $M(0,\bar{I})\subseteq H_{\bar{J},f}$. We will construct $x\in \mathbb{Z}^\omega$ as follows:
\begin{itemize}
    \item If $f(n)\neq 1$, let $x(n)=f(n)-1$,
    \item  if $f(n)=1$ let $x(n)=2$.

\end{itemize}

Of course $x\in M(0,\bar{I})$ and $x\notin H_{\bar{J},f}.$
\end{proof}

\begin{proposition}
    $\mathcal{H}\subseteq\mathcal{IE}$.
\end{proposition}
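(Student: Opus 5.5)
Since $\mathcal{IE}$ is a $\sigma$-ideal and $\mathcal{H}$ is $\sigma$-generated by the sets $H_{\bar{I},f}$, it suffices to show that every generator $H_{\bar{I},f}$ lies in $\mathcal{IE}$. I will in fact show the stronger inclusion $H_{\bar{I},f}\subseteq I_f$ for every $\bar{I}\in\mathbb{IP}$ and $f\in\mathbb{Z}^\omega$, so that each generator of $\mathcal{H}$ is contained in a single generator of $\mathcal{IE}$.

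Fix $x\in H_{\bar{I},f}$. By definition there is $N$ such that for every $n\geq N$ we can choose $m_n\in I_n$ with $x(m_n)=f(m_n)$. The intervals $I_n$ of an interval partition are pairwise disjoint, and the map $n\mapsto \min I_n$ is strictly increasing. Hence the chosen witnesses $m_n$ for $n\geq N$ are pairwise distinct, and the set $\{m_n: n\geq N\}$ is infinite. On each of these infinitely many coordinates $x$ agrees with $f$. Therefore $\exists^\infty m\;\; x(m)=f(m)$, that is, $x\in I_f$.

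This gives $H_{\bar{I},f}\subseteq I_f\in\mathcal{IE}$. If one wants to be explicit about the $\sigma$-generated structure, note that a countable union $\bigcup_k H_{\bar{I}^k,f_k}$ is then contained in $\bigcup_k I_{f_k}\in\mathcal{IE}$. Moreover $\mathcal{IE}$ is closed under subsets, so every member of $\mathcal{H}$ belongs to $\mathcal{IE}$. There is no real obstacle here. The only point that needs care is observing that the witnesses from distinct intervals are distinct, which is exactly the disjointness of the intervals in $\bar{I}$.
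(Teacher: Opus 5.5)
Your proof is correct and matches the paper's. The paper simply observes $H_{\bar{I},f}\subseteq I_f$ for every generator; you additionally spell out why witnesses from the disjoint intervals $I_n$ give infinitely many coordinates of agreement, and how the inclusion passes to countable unions.
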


\begin{proof}
    Note that for any $H_{\bar{I},f}\in \mathcal{H}$ we have $H_{\bar{I},f}\subseteq I_f$.
\end{proof}

\begin{proposition}
    There exists $X\in \mathcal{IE}$ such that  $X\notin\mathcal{H}$.
\end{proposition}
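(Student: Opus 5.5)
The natural witness is the single generator $X=I_0=\{x\in\mathbb{Z}^\omega:\exists^\infty n\;\; x(n)=0\}$, which lies in $\mathcal{IE}$ by definition. We argue by contradiction: suppose $I_0\in\mathcal{H}$. By the remark following the definition of $\mathcal{H}$, any countable union of generators $H_{\bar{I}^n,f_n}$ is contained in one generator. Hence there are $\bar{J}\in\mathbb{IP}$ and $g\in\mathbb{Z}^\omega$ with $I_0\subseteq H_{\bar{J},g}$. It therefore suffices to show that no single $H_{\bar{J},g}$ contains $I_0$.

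To do this, we build $x\in I_0\setminus H_{\bar{J},g}$ interval by interval. We must have $x(m)=0$ for infinitely many $m$, and, to escape $H_{\bar{J},g}$, there must be infinitely many $n$ such that $x(m)\neq g(m)$ for all $m\in J_n$. Split the intervals by parity.

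On the even-indexed intervals $J_{2n}$, set $x(m)=g(m)+1$ for every $m\in J_{2n}$. Then $x$ disagrees with $g$ on the whole of each such interval. Since this happens for infinitely many $n$, the condition ``$\forall^\infty n\;\exists m\in J_n\;\; x(m)=g(m)$'' fails, so $x\notin H_{\bar{J},g}$.

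On the odd-indexed intervals $J_{2n+1}$, set $x(m)=0$ for every $m$. Each interval is nonempty, so $x$ takes the value $0$ at infinitely many coordinates, giving $x\in I_0$.

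This contradicts $I_0\subseteq H_{\bar{J},g}$, and the proposition follows.

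No step is genuinely hard. The one point that needs care is the reduction to a single generator, which relies on the note in the paper that countable unions of generators of $\mathcal{H}$ are covered by one generator. That note could be justified by taking a $\sqsubseteq^*$-dominating partition coarse enough that each new interval contains a whole interval of each $\bar{I}^n$ (for $n$ below its index), and letting $g$ copy $f_n$ on those sub-intervals. Here we simply assume it as stated. The construction itself is routine: all the needed freedom comes from the interval structure.
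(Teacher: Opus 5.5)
Your proof is correct and is essentially the paper's argument. You take a generator of $\mathcal{IE}$, reduce to a single generator of $\mathcal{H}$, and build a point that agrees with the $\mathcal{IE}$-function on the odd intervals while differing from the $\mathcal{H}$-function everywhere on the even intervals. Yours is in fact slightly more careful: the paper's written proof only rules out $I_f\subseteq H_{\bar{I},f}$ with the same $f$, whereas you handle an arbitrary $g$ and make the reduction to one generator explicit.
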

\begin{proof}
Take any set $I_f\in \mathcal{IE}.$ Assume that $I_f\subseteq H_{\bar{I},f}$ for some interval partition $\bar{I}.$ But there exists $x\in I_f$ such that for every $k$ and every $m\in I_{2k}$ we have $x(m)\neq f(m).$
\end{proof}

\begin{proposition}
    $\mathcal{ED}\subseteq\mathcal{G}$.
\end{proposition}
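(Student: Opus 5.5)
Since $\mathcal{G}$ is defined by a pointwise condition, the plan is to show first that it is a $\sigma$-ideal, and then that each generator $E_f$ lies in it.

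\emph{Generators.} Fix $f\in\mathbb{Z}^\omega$ and an interval partition $\bar{I}$, and take $z=f$. If $x\in E_f$, then $x(m)\neq f(m)$ for all $m\geq N$, for some $N$. Hence for every $n$ with $\min I_n\geq N$ we have $x(m)\neq z(m)$ for all $m\in I_n$. There are infinitely many such $n$, so $E_f\in\mathcal{G}$ with the witness $z=f$ working uniformly in $x$.

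\emph{Closure under countable unions.} Heredity is immediate. Let $X_k\in\mathcal{G}$ for $k<\omega$, and fix $\bar{I}$. The witnesses $z_k$ for $X_k$ cannot be combined naively, so instead I would partition the index set $\omega$ into infinitely many infinite pieces $A_k$. For each $k$, let $\bar{I}^k$ be a partition whose intervals are the $I_n$ for $n\in A_k$, filled in with the gaps so that it is an interval partition. Concretely, $\bar{I}^k$ has blocks $J^k_j=[\min I_{a_j},\min I_{a_{j+1}})$, where $a_0<a_1<\dots$ enumerates $A_k$, so that each block contains exactly one $I_{a_j}$.

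Applying $X_k\in\mathcal{G}$ to $\bar{I}^k$ gives $z_k$ such that every $x\in X_k$ avoids $z_k$ on infinitely many whole blocks $J^k_j$, and in particular on infinitely many $I_{a_j}$ with $a_j\in A_k$. Now define $z$ by setting $z\upharpoonright I_n=z_k\upharpoonright I_n$ when $n\in A_k$. Then every $x\in X_k$ satisfies $x(m)\neq z(m)$ for all $m\in I_n$ for infinitely many $n\in A_k$. Hence $z$ witnesses the condition for $\bigcup_k X_k$.

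The main subtlety is this union step, namely refining the partition so that each $X_k$ uses its own disjoint infinite set of intervals. Since $\mathcal{ED}$ is $\sigma$-generated by the sets $E_f$, the two steps together give $\mathcal{ED}\subseteq\mathcal{G}$.
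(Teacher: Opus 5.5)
Your proof is correct. Its generator step is exactly the paper's proof: the paper also takes $z=f$ for $E_f$ and uses $x\neq^* f$. The passage from generators to all of $\mathcal{ED}$ rests on $\mathcal{G}$ being a $\sigma$-ideal, which the paper proves in a separate, later proposition.

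That later proposition uses the same coarsen-and-interleave idea as your union step, but organised differently. The paper takes one common coarsening $\bar J$, where $J_n$ is the union of $n+1$ consecutive intervals of $\bar I$, and gives each $z_l$ its own sub-interval inside every sufficiently late $J_n$. You instead give each $X_k$ its own coarsening along a disjoint infinite index set $A_k$, which is if anything cleaner.

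One detail to fix: the first block of $\bar I^k$ should start at $0$, e.g.\ $J^k_0=[0,\min I_{a_1})$, so that $\bar I^k$ really is an interval partition of $\omega$.
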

\begin{proof}
    Take any $E_f\in \mathcal{ED}$. To show that $E_f \in \mathcal{G}$ take any partition $\bar{I}$. Take $f$. Of course for every $x\in E_f$, there exist infinitely many $n$ such that for every $m\in I_n$ we have $f(m)\neq x(m),$ because for every $x\in E_f$ we have $x\neq^*f.$    
\end{proof}

All of these ideals can be reformulated in the Cantor space.

\begin{proposition}

    In Cantor space $\mathcal{H}=\mathcal{M}_-=\mathcal{M}$.
    \end{proposition}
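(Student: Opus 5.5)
In the Cantor space the definitions read as follows. $\mathcal{M}_-$ is the $\sigma$-ideal generated by the sets $M(x,\bar{I})$ with $x\in 2^\omega$, $\bar{I}\in\mathbb{IP}$. $\mathcal{H}$ is the $\sigma$-ideal generated by the sets $H_{\bar{I},f}$ with $f\in 2^\omega$. I would prove the chain
$$\mathcal{M}\subseteq\mathcal{M}_-\subseteq\mathcal{H}\subseteq\mathcal{M},$$
which gives all three equalities.

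\textbf{First inclusion.} $\mathcal{M}\subseteq\mathcal{M}_-$ is immediate from the second item of Theorem \ref{XD}: every meager set is contained in some $M(x,\bar{I})$, which is a generator of $\mathcal{M}_-$.

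\textbf{Third inclusion.} For $\mathcal{H}\subseteq\mathcal{M}$ it suffices, since $\mathcal{M}$ is a $\sigma$-ideal, to check that each generator $H_{\bar{I},f}$ is meager. The point is that in $2^\omega$, for $x\in 2^\omega$, the statement ``$\exists m\in I_n\ x(m)=f(m)$'' is equivalent to $x\upharpoonright I_n\neq (1-f)\upharpoonright I_n$, where $1-f$ is the coordinatewise complement. Hence $H_{\bar{I},f}=M(1-f,\bar{I})$ exactly, and this set is meager by the first item of Theorem \ref{XD}.

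\textbf{Second inclusion.} The same identity read backwards gives $\mathcal{M}_-\subseteq\mathcal{H}$: each generator satisfies $M(x,\bar{I})=H_{\bar{I},1-x}\in\mathcal{H}$, and $\mathcal{H}$ is a $\sigma$-ideal. This is precisely where the Baire space differs. There, $x\upharpoonright I_n\neq y\upharpoonright I_n$ only says that the two restrictions disagree somewhere, and it cannot be rewritten as agreement with a single fixed function somewhere on $I_n$, because there are infinitely many alternative values. In $2^\omega$, disagreement at a coordinate forces agreement with the complement.

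There is no real obstacle. The only care needed is to state the identity $M(x,\bar{I})=H_{\bar{I},1-x}$ cleanly, noting that the ``$\forall^\infty n$'' quantifiers match on both sides, and to observe that the generators and $\sigma$-ideal closure transfer directly.
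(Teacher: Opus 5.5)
Your proposal is correct and matches the paper's proof. The paper also rests entirely on the complement trick $M(x,\bar{I})\subseteq H_{\bar{I},x+1}$ (with $x+1$ computed mod $2$, i.e.\ your $1-x$), taking the remaining inclusions from the earlier proposition $\mathcal{H}\subseteq\mathcal{M}_-$ and from Theorem \ref{XD}; your identity $H_{\bar{I},f}=M(1-f,\bar{I})$ just packages both directions into one equality.
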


\begin{proof}
    We only need to show that in Cantor space $\mathcal{M}_- \subseteq\mathcal{H}$. Take any $x\in\mathcal{M_-}$. We know that $X\subseteq M(x,\bar{I})$ for some $x\in 2^\omega$ and $\bar{I}\in \mathbb{IP}.$ We have $X\subseteq H_{\bar{I},x+1}$.
\end{proof}

\begin{proposition}

    In Cantor space $\mathcal{G}=\mathcal{SMZ}^+=\mathcal{SMZ}$.
\end{proposition}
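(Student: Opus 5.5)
In the Cantor space the definitions of $\mathcal{SMZ}^+$ and of the Bartoszyński--Judah characterization coincide. For $X\subseteq 2^\omega$, the condition defining $X\in\mathcal{SMZ}^+$ is word for word the condition in Theorem \ref{xdxdxd}, with $z$ now ranging over $2^\omega$. So Theorem \ref{xdxdxd} gives $\mathcal{SMZ}^+=\mathcal{SMZ}$ immediately. The real content is therefore $\mathcal{G}=\mathcal{SMZ}^+$, and the plan is to exploit the fact that in $2^\omega$ there are only two values per coordinate.

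The key observation is that for $x,z\in 2^\omega$ and an interval $I$,
$$\forall m\in I\;\; x(m)\neq z(m)\iff x\upharpoonright I=(z+1)\upharpoonright I,$$
where $z+1$ denotes the coordinatewise complement, i.e.\ addition of the constant sequence $1$ in $\mathbb{Z}_2^\omega$. This is the same trick used in the previous proposition, where $M(x,\bar I)\subseteq H_{\bar I,x+1}$.

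Fix an interval partition $\bar I$ and apply the observation.
\begin{itemize}
\item Suppose $X\in\mathcal{G}$, witnessed by $z$. Then for every $x\in X$ there are infinitely many $n$ with $x\upharpoonright I_n=(z+1)\upharpoonright I_n$. Hence $z+1$ witnesses the $\mathcal{SMZ}^+$ condition for $\bar I$.
\item Conversely, suppose $z$ witnesses $\mathcal{SMZ}^+$ for $\bar I$. Then $z+1$ witnesses the $\mathcal{G}$ condition.
\end{itemize}
Since $\bar I$ was arbitrary, $\mathcal{G}=\mathcal{SMZ}^+$.

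Alternatively, one could argue dually via the $^*$ operation: use $\mathcal{SMZ}^+=\mathcal{M}_-^*$ together with the previous proposition $\mathcal{H}=\mathcal{M}_-=\mathcal{M}$ in the Cantor space, and Galvin--Mycielski--Solovay. The direct complement argument is shorter, and I would present that one.

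There is no serious obstacle. The only point needing care is to make explicit that the definitions of $\mathcal{G}$ and $\mathcal{SMZ}^+$ are being read in $2^\omega$ with the witness $z\in 2^\omega$. The whole argument depends on the complement identity, which fails in $\mathbb{Z}^\omega$ because "differs everywhere on $I$" no longer pins down $x\upharpoonright I$.
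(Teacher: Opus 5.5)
Your proof is correct and is essentially the paper's: the paper also gets $\mathcal{G}\subseteq\mathcal{SMZ}^+$ by replacing the $\mathcal{G}$-witness $a$ with $z=a+1$, and takes $\mathcal{SMZ}^+=\mathcal{SMZ}$ from the Bartoszy\'nski--Judah characterization (Theorem \ref{xdxdxd}). You also write out the reverse inclusion $\mathcal{SMZ}^+\subseteq\mathcal{G}$ via $z+1$, which the paper leaves implicit; note that this direction holds in $\mathbb{Z}^\omega$ as well, so only $\mathcal{G}\subseteq\mathcal{SMZ}^+$ actually needs the two-valued complement identity.
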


\begin{proof}

        We only need to show that in Cantor space $\mathcal{G} \subseteq\mathcal{SMZ}^+$. Take any $X\in\mathcal{G}$ and any partition $\bar{I}\in \mathbb{IP}$. We will find $z\in 2^\omega$ such that for every $x\in X \;\;\exists^\infty n \;\; x\upharpoonright I_n=z\upharpoonright I_n$.
        Since $X\in \mathcal{G}$ we know that there exists $a\in2^\omega$ such that $\forall x\in X\;\; \exists^\infty n\;\; \forall m\in I_n \;\; x(m)\neq a(m)$.
        Let $z=a+1.$
\end{proof}

\begin{proposition}
    $\mathcal{G}$ forms a $\sigma$-ideal in the Baire space.
\end{proposition}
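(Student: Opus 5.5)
The plan is to check the ideal axioms for $\mathcal{G}$ in turn: properness, containing $\emptyset$, hereditariness, and closure under countable unions. The first three are immediate, and all the work is in the last.

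For $\emptyset\in\mathcal{G}$ the condition is vacuous. Hereditariness holds because the witness $z$ for $X$ and $\bar{I}$ also witnesses the condition for any $Y\subseteq X$.

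For properness, suppose $\mathbb{Z}^\omega\in\mathcal{G}$. Take $\bar{I}$ to be the partition into singletons, and let $z$ be the witness. Then $x=z$ itself must satisfy $\exists^\infty n\ x(n)\neq z(n)$, which is absurd.

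For closure under countable unions, let $X_k\in\mathcal{G}$ for $k<\omega$ and fix $\bar{I}\in\mathbb{IP}$. The idea is to split $\bar{I}$ into countably many disjoint infinite subsequences and assign one to each $X_k$.
\begin{itemize}
\item Fix a partition $\omega=\bigcup_k A_k$ into infinite sets, with enumerations $A_k=\{a^k_0<a^k_1<\dots\}$.
\item For each $k$, build an interval partition $\bar{J}^k$ whose intervals each contain one of the blocks $I_{a^k_j}$. Concretely, let $J^k_j$ run from $\max I_{a^k_{j-1}}+1$ to $\max I_{a^k_j}$, with $J^k_0=[0,\max I_{a^k_0}]$. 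This is an interval partition and $I_{a^k_j}\subseteq J^k_j$.
\item Apply $X_k\in\mathcal{G}$ to $\bar{J}^k$, obtaining $z_k$ such that every $x\in X_k$ disagrees with $z_k$ on all of $J^k_j$ for infinitely many $j$. In particular such an $x$ disagrees with $z_k$ on all of $I_{a^k_j}$ for infinitely many $j$.
\item Define $z$ by $z\upharpoonright I_n=z_k\upharpoonright I_n$ whenever $n\in A_k$.
\item For $x\in X_k$, there are infinitely many $n=a^k_j\in A_k$ with $x(m)\neq z_k(m)=z(m)$ for all $m\in I_n$. Hence $z$ witnesses $\bigcup_k X_k\in\mathcal{G}$ for $\bar{I}$.
\end{itemize}

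The only real point is the fact used in the third step: disagreeing on all of a larger interval $J^k_j$ implies disagreeing on the subinterval $I_{a^k_j}$. In other words, the $\mathcal{G}$-condition is downward monotone under passing to subintervals. The coarsening construction with $A_k$ is designed precisely to exploit this. Finite unions need no separate treatment, since they are a special case of countable unions.
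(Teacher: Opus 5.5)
Your proof is correct and follows essentially the paper's route. You coarsen $\bar{I}$, apply each $X_k\in\mathcal{G}$ to the coarsening, and splice the witnesses $z_k$ together block by block, using that total disagreement on a coarse interval passes to every block of $\bar{I}$ inside it. The only difference is bookkeeping: the paper uses one common coarsening whose $n$-th interval $J_n$ is a union of $n+1$ consecutive $I$-blocks, giving the $(b+1)$-st block to $z_{b+1}$, whereas you use a separate coarsening $\bar{J}^k$ for each $X_k$ along a partition $\omega=\bigcup_k A_k$. Your version is slightly cleaner, and you also verify properness and heredity, which the paper leaves implicit.
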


\begin{proof}

    Take $(X_l: l<\omega)$ where each $X_l \in \mathcal{G}.$ We will show that $\bigcup_{l<\omega } X_l\in\mathcal{G}$. 

    Take any interval partition $\bar{I}.$ We will construct a bigger partition $\bar{J}.$ Let
   $$J_0=I_0,\;J_1=I_1\cup I_2,\;  J_n=I_{\frac{n\cdot(n-1)}{2}+n}\cup \cdots  \cup  I_{\frac{n\cdot(n-1)}{2}+2n}.$$
    Since every $X_l$ is in $\mathcal{G}$ there exists $z_l$ for every $X_l$ such that: \[\forall x\in X_l \;\; \exists^\infty n\; \forall m\in J_n \;\; x(m)\neq z_l(m).\] We want to construct $z$ such that $\forall l\ \forall x\in X_l \ \exists^\infty n\ \forall m\in J_n \;\; z(m)\neq x(m)$. 
    
    Let $z\upharpoonright I_0 =z_1\upharpoonright I_0$.
    Next, if $k=\frac{n(n-1)}{2}+n\) for some \(n\) and $0<b<k+1$ define 
$z\upharpoonright I_k=z_1\upharpoonright I_k$ and  
  $z\upharpoonright I_{k+b}=z_{b+1}\upharpoonright I_{k+b}$.
\end{proof}

\begin{proposition}
    $\mathcal{SMZ}^+$ forms a  $\sigma$-ideal in Baire space.
\end{proposition}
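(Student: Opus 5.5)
Two routes are available. The first is structural: by the Corollary above, $\mathcal{SMZ}^+=\mathcal{M}_-^*$. Downward closure and $\emptyset\in\mathcal{SMZ}^+$ are immediate from the definition, and properness holds because $\mathbb{Z}^\omega+\{y\}=\mathbb{Z}^\omega$ while $\{y\}\in\mathcal{M}_-$. So the real content is closure under countable unions. I will argue this directly with interval partitions, copying the argument just given for $\mathcal{G}$, since the combinatorics are identical with ``$\forall m\in I_n\ x(m)\neq z(m)$'' replaced by ``$x\upharpoonright I_n=z\upharpoonright I_n$''.

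Let $X_l\in\mathcal{SMZ}^+$ for $l<\omega$, and fix an interval partition $\bar I$. I will coarsen $\bar I$ into blocks that contain more and more consecutive $I$-intervals. Concretely, let $J_n$ be the union of $n+1$ consecutive intervals of $\bar I$, so that $J_n=I_{a_n}\cup\dots\cup I_{a_n+n}$ with $a_n=n(n+1)/2$. For each $l$, apply $X_l\in\mathcal{SMZ}^+$ to the partition $\bar J$ to obtain $z_l$ with
\[
\forall x\in X_l\ \exists^\infty n\ \ x\upharpoonright J_n=z_l\upharpoonright J_n .
\]
Define $z$ by setting $z\upharpoonright I_{a_n+b}=z_b\upharpoonright I_{a_n+b}$ for $b\le n$. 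That is, inside the $n$-th block, the $b$-th subinterval copies $z_b$.

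Now fix $x\in X_l$. For infinitely many $n$ we have $x\upharpoonright J_n=z_l\upharpoonright J_n$, and in particular $x\upharpoonright I_{a_n+l}=z_l\upharpoonright I_{a_n+l}$. Whenever $n\ge l$, the latter equals $z\upharpoonright I_{a_n+l}$. Distinct $n$ give distinct indices $a_n+l$, so infinitely many $k$ satisfy $x\upharpoonright I_k=z\upharpoonright I_k$. Hence $\bigcup_l X_l\in\mathcal{SMZ}^+$.

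The only delicate point is this bookkeeping: the coarser partition must be chosen before the $z_l$, and each block must eventually contain a slot reserved for every $l$. The triangular block structure handles both requirements. Agreement on a whole block $J_n$ implies agreement on each of its subintervals, which is exactly why coarsening is harmless here.
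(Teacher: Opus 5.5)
Your proof is correct and matches the paper's argument: the same triangular coarsening $J_n=I_{a_n}\cup\dots\cup I_{a_n+n}$ with $a_n=n(n+1)/2$, witnesses $z_l$ chosen for $\bar J$, and a diagonal $z$ that copies one $z_b$ onto the $b$-th subinterval of each block. Your write-up also gives the verification the paper leaves implicit (agreement on $J_n$ with $n\ge l$ gives agreement on $I_{a_n+l}$, at infinitely many distinct indices) and the routine ideal axioms, and it avoids the paper's index slip, where $0<b<k+1$ should read $0<b\le n$.
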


\begin{proof}
    Take $(X_l: l<\omega)$ where each $X_l \in \mathcal{SMZ}^+.$ We will show that $\bigcup_{l<\omega } X_l\in\mathcal{SMZ}^+$. 

    Take any interval partition $\bar{I}.$ We will construct a bigger partition $\bar{J}.$ Let:
    $$J_0=I_0, \;J_1=I_1\cup I_2,\; J_n=I_{\frac{n\cdot(n-1)}{2}+n}\cup \cdots \cup I_{\frac{n\cdot(n-1)}{2}+2n}.$$ 
    
    Since every $X_l$ is in $\mathcal{SMZ}^+$ there exists $z_l$ for every $X_l$ such that: $$\forall x\in X_l \;\; \exists^\infty n  \;\; x\upharpoonright J_n= z_l\upharpoonright J_n.$$ We want to construct $z$ such that $\forall l \forall x\in X_l \ \exists^\infty_n  \;\; z\upharpoonright J_n= x\upharpoonright J_n$.
    
    Let $z\upharpoonright I_0 =z_1\upharpoonright I_0$.
    Next, if $k=\frac{n(n-1)}{2}+n\) for some \(n\) and $0<b<k+1$ define 
$z\upharpoonright I_k=z_1\upharpoonright I_k$ and  
  $z\upharpoonright I_{k+b}=z_{b+1}\upharpoonright I_{k+b}$.
\end{proof}

The next ideal was studied in \cite{umeager} where the authors proved that a Polish group $G$ is locally compact if and only if $\mathcal{M}(G) = \mathcal{UM}(G)$.

\begin{df}
    Set $X\subseteq\mathbb{Z}^\omega$ is uniformly nowhere dense if for every $k\in\omega$ there exists $l\in \omega$ such that for every $\alpha\in \mathbb{Z}^k$ there exists $\beta\in \mathbb{Z}^{k+l}$ such that $\alpha\subseteq\beta$ and $[\beta]\cap X=\emptyset$.
\end{df}

\begin{df}
    Set $X\subseteq\mathbb{Z}^\omega$ $(X\in \mathcal{UM})$ is uniformly meager if it is a countable union of uniformly nowhere dense sets.
\end{df}

\begin{thm}\label{UM}
    Set $X\subseteq\mathbb{Z}^\omega$ is uniformly meager if and only if there exists increasing sequence  $a\in \omega^\omega$ and a function $\psi:\bigcup\mathbb{Z}^{a_{n}}\rightarrow\mathbb{Z}^{<\omega} $ such that $|\psi(\sigma)|=a_{n+1}-a_n$, and $$X\subseteq M(a,\psi)=\{x: \forall^\infty n \;\; x\upharpoonright [a_n, a_{n+1}) \neq \psi (x\upharpoonright a_n) \}.$$
\end{thm}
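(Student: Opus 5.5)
We prove the two implications separately. In both directions the key objects are a block structure $(a_n)$ and a predictor $\psi$ that assigns to each initial segment a forbidden next block.

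\emph{($\Longleftarrow$)} Assume $X\subseteq M(a,\psi)$. For each $N$ we show that
$$Y_N=\{x:\ \forall n\geq N\;\; x\upharpoonright[a_n,a_{n+1})\neq\psi(x\upharpoonright a_n)\}$$
is uniformly nowhere dense. Then $X\subseteq\bigcup_N Y_N$ is uniformly meager.

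\begin{itemize}
\item Fix $N$ and $k$. Choose $n\geq N$ with $a_n\geq k$, and set $l=a_{n+1}-k$.
\item Given $\alpha\in\mathbb{Z}^k$, extend it arbitrarily to some $\gamma\in\mathbb{Z}^{a_n}$.
\item Put $\beta=\gamma^\frown\psi(\gamma)\in\mathbb{Z}^{a_{n+1}}=\mathbb{Z}^{k+l}$.
\item Every $x\in[\beta]$ satisfies $x\upharpoonright[a_n,a_{n+1})=\psi(x\upharpoonright a_n)$, so $[\beta]\cap Y_N=\emptyset$.
\end{itemize}

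The number $l$ depends only on $k$, not on $\alpha$, which is exactly what uniform nowhere density requires.

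\emph{($\Longrightarrow$)} Let $X\subseteq\bigcup_j X_j$, where each $X_j$ is uniformly nowhere dense. Replacing $X_j$ by $X_0\cup\dots\cup X_j$ (a finite union of uniformly nowhere dense sets is again uniformly nowhere dense: apply the witnesses one after another, adding the lengths), we may assume the $X_j$ are increasing. For each $j$ and $k$ let $l_j(k)$ be a witness for $X_j$.

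Build $a$ recursively: $a_0=0$ and
$$a_{n+1}=a_n+\sum_{j\leq n} l_j(\,\cdot\,),$$
where the lengths are taken at successive stages. To define $\psi(\sigma)$ for $\sigma\in\mathbb{Z}^{a_n}$, proceed in $n+1$ steps, starting from $\sigma$:
\begin{itemize}
\item at step $0$, extend $\sigma$ to $\beta_0$ with $[\beta_0]\cap X_0=\emptyset$;
\item at step $j$, extend $\beta_{j-1}$ by exactly $l_j(|\beta_{j-1}|)$ further entries to $\beta_j$ with $[\beta_j]\cap X_j=\emptyset$.
\end{itemize}
Each length $|\beta_j|$ is determined by $a_n$ alone, not by $\sigma$, because the step lengths depend only on the current length. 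Hence $a_{n+1}$ is well defined, and we let $\psi(\sigma)=\beta_n\upharpoonright[a_n,a_{n+1})$.

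Now take $x\in X_j$. For every $n\geq j$, if $x\upharpoonright[a_n,a_{n+1})=\psi(x\upharpoonright a_n)$, then $x$ extends the $\beta_j$ built from $\sigma=x\upharpoonright a_n$. That would put $x\in[\beta_j]$, which is disjoint from $X_j$, a contradiction. So $x\in M(a,\psi)$.

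The main care point is this uniformity of lengths, which is what makes $a_{n+1}$ independent of $\sigma$. Extending earlier blocks keeps the earlier witnesses valid, since $[\beta_{j'}]\supseteq[\beta_j]$ for $j'\leq j$. The increasing property of $a$ is ensured by taking each $l_j\geq 1$.
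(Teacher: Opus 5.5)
Your proof is correct and follows the same route as the paper. For ($\Longleftarrow$) you write $M(a,\psi)$ as the union of the tail sets $Y_N$ and show each is uniformly nowhere dense by padding $\alpha$ to a block boundary and appending $\psi$; for ($\Longrightarrow$) you use the uniform witnesses to get block lengths independent of $\sigma$, with $\psi$ prescribing an extension whose cylinder misses the given sets. The only difference is bookkeeping: you stack the witnesses for $X_0,\dots,X_n$ inside block $n$ directly, whereas the paper gives each $X_k$ its own $(a^k,\psi_k)$ with $a^k_n\le a^{k+1}_n$ and diagonalizes via $a_n=a^n_n$ with zero-padding, so your version also spells out the step (a uniformly nowhere dense set lies in a set of this form) that the paper only asserts.
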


\begin{proof}
    $(\Longrightarrow)$ First we will show that set $Y=\{x: \forall\; n \;\; x\upharpoonright [a_n, a_{n+1}) \neq \psi (x\upharpoonright a_n) \}$ is uniformly nowhere dense. Take any $k\in\omega$, $x\in Y$ and $x\upharpoonright k$. Of course $k\in [a_n, a_{n+1})$ for some $n.$ Let $\alpha=(x\upharpoonright k)^\frown{{\underbrace{ 0^\frown 0^\frown\ldots^\frown 0}_{a_{n+1}-k}}}$. We know that there exists $\psi(\alpha)$ such that $Y\cap [\alpha]=\emptyset.$ Let $\beta=(x\upharpoonright k)^\frown{{\underbrace{ 0^\frown 0^\frown\ldots^\frown 0}_{a_{n+1}-k}}}^\frown\psi(\alpha).$ 

    Now we will show that for every uniformly meager set $X$ there exists $M(a,\psi)$ such that $X\subseteq M(a,\psi)$. Take $(X_k)_{n<\omega}$- uniformly nowhere dense such that $X=\bigcup_k X_k $, where $X_k\subseteq X_{k+1}$.  For every $X_k$ we have $X_k\subseteq \{x: \forall\; n \;\; x\upharpoonright [a^k_n, a^k_{n+1}) \neq \psi_k (x\upharpoonright a^k_n) \}$ and $a_n^k\subseteq a_n^{k+1}.$ We will construct $a\in \omega^\omega$, and $\psi:\bigcup\mathbb{Z}^{a_{n}}\rightarrow\mathbb{Z}^{<\omega} $. Let $a_n=a_n^n$ and $\psi(y\upharpoonright a_n)= \psi_n(y\upharpoonright a_n)^\frown{{\underbrace{ 0^\frown 0^\frown\ldots^\frown 0}_{a_{n+1- a^n_{n+1}}}}}.$ Of course we have $X\subseteq M(a,\psi)$

    $(\Longleftarrow)$ For every $M(a,\psi)$ we have $M(a,\psi)=\bigcup_m \{x: \forall\; n\geqslant m \;\; x\upharpoonright [a_n, a_{n+1}) \neq \psi (x\upharpoonright a_n) \}$. Of course $\{x: \forall\; n\geqslant m \;\; x\upharpoonright [a_n, a_{n+1}) \neq \psi (x\upharpoonright a_n) \}$ is a uniformly nowhere dense set.
\end{proof}

\begin{proposition}
    $\mathcal{M}_-\subseteq\mathcal{UM}$. 
\end{proposition}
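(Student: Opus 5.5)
Since $\mathcal{UM}$ is a $\sigma$-ideal (a countable union of countable unions of uniformly nowhere dense sets is again such a union) and $\mathcal{M}_-$ is $\sigma$-generated by the sets $M(x,\bar{I})$, it suffices to show that every generator $M(x,\bar{I})$, with $x\in\mathbb{Z}^\omega$ and $\bar{I}\in\mathbb{IP}$, is uniformly meager. I will do this directly through the characterization in Theorem \ref{UM}: I exhibit an increasing $a\in\omega^\omega$ and a map $\psi$ with $M(x,\bar{I})\subseteq M(a,\psi)$.

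The natural choice is to let $a_n=\min I_n$, so that $[a_n,a_{n+1})=I_n$, and to define $\psi$ on $\bigcup_n\mathbb{Z}^{a_n}$ by $\psi(\sigma)=x\upharpoonright I_n$ for $\sigma\in\mathbb{Z}^{a_n}$. This ignores $\sigma$ entirely, so $\psi$ is constant on each level. The length condition $|\psi(\sigma)|=a_{n+1}-a_n=|I_n|$ holds automatically. (If the partition has $\min I_0>0$, or needs relabelling, I adjust the first few terms; this only changes finitely many $n$ and is harmless because both sets are defined by ``$\forall^\infty n$''.)

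Then for $y\in M(x,\bar{I})$ we have, for almost all $n$, $y\upharpoonright I_n\neq x\upharpoonright I_n=\psi(y\upharpoonright a_n)$. Hence $y\in M(a,\psi)$, and Theorem \ref{UM} yields that $M(x,\bar{I})$ is uniformly meager.

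The only point needing care is the bookkeeping in Theorem \ref{UM}: the statement requires $a$ to be strictly increasing, which holds because the intervals are nonempty, and it requires $\psi$ to be defined on all of $\bigcup_n\mathbb{Z}^{a_n}$, which the constant-per-level definition provides. Alternatively, I can avoid Theorem \ref{UM} altogether. Write $M(x,\bar{I})=\bigcup_m Y_m$ with $Y_m=\{y:\forall n\geq m\; y\upharpoonright I_n\neq x\upharpoonright I_n\}$. For each $k$, take $n\geq m$ with $\min I_n\geq k$ and set $l=\max I_n+1-k$. Any $\alpha\in\mathbb{Z}^k$ extends to some $\beta\in\mathbb{Z}^{k+l}$ with $\beta\upharpoonright I_n=x\upharpoonright I_n$, and then $[\beta]\cap Y_m=\emptyset$. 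This shows each $Y_m$ is uniformly nowhere dense, and it is the fallback if the characterization's indexing proves awkward.
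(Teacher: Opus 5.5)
Your proof is correct and is essentially the paper's: take $a_n=\min I_n$, set $\psi(\sigma)=x\upharpoonright I_n$ for every $\sigma\in\mathbb{Z}^{a_n}$, observe $M(x,\bar{I})\subseteq M(a,\psi)$, and invoke Theorem \ref{UM}. Your fallback argument, showing directly that each $Y_m$ is uniformly nowhere dense, is also valid; it is just the $(\Longleftarrow)$ direction of that theorem written out for this special case.
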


\begin{proof}
    Take any $M(x,\bar{I})\in \mathcal{M}_-$ where $\bar{I}=\{[a_n,a_{n+1}):n\in\omega\}$. For $\sigma\in \mathbb{Z}^{a_{n}}$ and $n\in \omega$ set $\psi (\sigma)=x\upharpoonright [a_n,a_{n+1}).$ Of course $M(x,\bar{I})\subseteq M(a,\psi)$.
\end{proof}

\begin{thm}
    There exists $X\in \mathcal{UM} $ such that $X\notin \mathcal{M}_-.$
\end{thm}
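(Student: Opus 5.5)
We exhibit a single set of the form $M(a,\psi)$ from Theorem \ref{UM}, with the forbidden block depending on the past, and show it is not contained in any countable union of generators $M(x_k,\bar{I}^k)$. Take $a_n=n$, so each block is a single coordinate, and let $\psi(\sigma)=\sigma(|\sigma|-1)$ for $|\sigma|\geq 1$, with $\psi(\emptyset)=0$. Then
$$X=M(a,\psi)=\{x:\forall^\infty n\;\; x(n)\neq x(n-1)\}\in\mathcal{UM},$$
the set of sequences that are eventually never constant on two consecutive coordinates. By Theorem \ref{UM} (or directly, since $X$ is a countable union of uniformly nowhere dense sets) we have $X\in\mathcal{UM}$.

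Now suppose, towards a contradiction, that $X\subseteq\bigcup_k M(x_k,\bar{I}^k)$. We will build $y\in X$ lying in no $M(x_k,\bar{I}^k)$. This means that for each $k$ we need infinitely many $n$ with $y\upharpoonright I^k_n=x_k\upharpoonright I^k_n$, while $y$ still satisfies $y(m)\neq y(m-1)$ for all large $m$.

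The construction is a bookkeeping recursion. Enumerate $\omega$ so that each $k$ occurs infinitely often. At stage $s$, with the requirement index $k$ and a finite initial segment $y\upharpoonright p$ already fixed, choose an interval $I^k_n$ with $\min I^k_n>p+1$. On this interval copy $y\upharpoonright I^k_n=x_k\upharpoonright I^k_n$. On the gap $[p,\min I^k_n)$, fill in values that differ from both neighbours. This is possible because $\mathbb{Z}$ is infinite: at each position choose an integer different from the previous value, and at the last gap position also different from $x_k(\min I^k_n)$.

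The main obstacle is that the copied block $x_k\upharpoonright I^k_n$ may itself contain equal consecutive values. If this happens infinitely often, then $y\notin X$. The fix is to exploit the freedom in the generators. Note that $M(x,\bar{I})\subseteq M(x',\bar{I})$ fails in general, so we cannot simply modify $x_k$. Instead we would alter the example so that the forbidden block depends on the past in a way no single fixed $x_k$ can mimic.

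So replace $\psi$ by a rule using longer blocks. Take $a_n=n$ again, but forbid the value $x(n)=g(x\upharpoonright n)$ for a suitable coding function $g$. Then $y\in X$ means $y(n)\neq g(y\upharpoonright n)$ for almost all $n$. When copying $x_k$ on $I^k_n$, we need $x_k(m)\neq g(y\upharpoonright m)$ for $m$ in that block. The point is to design $g$ so that it can always be dodged: choose $g(\sigma)$ to be a value encoded by the length of $\sigma$ and the last gap entry. Since the gap entries are ours to choose, and they range over infinitely many integers, we select the last gap entry so that the forbidden values along the copied block never coincide with $x_k$. For example, let $g(\sigma)=\sigma(j)+|\sigma|$, where $j$ is the last position before $|\sigma|$ carrying a marker; the marker is chosen huge, say larger than all of $|x_k|$ on the block, which forces inequality throughout.

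Verifying that such a $g$ can be defined uniformly, without reference to the $x_k$, is the step I expect to require the most care. The remainder, that the bookkeeping yields infinitely many matching blocks for each $k$, is routine.
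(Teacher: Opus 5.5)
There is a genuine gap. The only non-routine part of this theorem is the choice of $X$, and your proposal never fixes one.

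Your first candidate does not just resist the construction; it lies in $\mathcal{M}_-$. Take $I_n=\{2n,2n+1\}$. Every $x$ with $x(m)\neq x(m-1)$ for almost all $m$ has $x\upharpoonright I_n\neq 0\upharpoonright I_n$ for almost all $n$, so $\{x:\forall^\infty n\; x(n)\neq x(n-1)\}\subseteq M(0,\bar I)$. The same trick works for any rule in which the forbidden value at $n$ depends only on $n$ and $x\upharpoonright[n-w,n)$: use intervals of length $w+1$ and let $x_0$ realize the forbidden pattern on each interval. In particular it covers the literal reading $j=|\sigma|-1$ of your $g(\sigma)=\sigma(j)+|\sigma|$.

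Your actual second candidate is not a definition. The ``marker'' is something you choose while building $y$, larger than $|x_k|$ on the block. But $X$ must be fixed before the $x_k$ are given, so $g$ has to recognise the marker from $\sigma$ alone. You flag exactly this as unverified, and it is the whole content of the proof.

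The missing idea is that the forbidden value must depend on the history in a way that no fixed $x_k$ can track. The paper gets this from a bijection $\psi\colon\mathbb{Z}^{<\omega}\to\mathbb{Z}$ and $X=\{x:\forall^\infty n\; x(n)\neq\psi(x\upharpoonright n)\}$. For each copied position $m$ there is at most one dangerous history, namely $\psi^{-1}(x_k(m))$. So a single free coordinate just before the copied block, chosen outside finitely many values, makes every copied value legal.

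Your marker idea can also be repaired by making the marker intrinsic, e.g.\ $g(\sigma)=\max_{i<|\sigma|}|\sigma(i)|+1$ (with $\max\emptyset=0$). Put one entry $N$ just before $I^k_n$, with $N$ larger than $g$ of the current history and larger than every $|x_k(m)|$ for $m\in I^k_n$. Then $g(y\upharpoonright m)>N>|x_k(m)|$ on the block. The remaining gap entries can be $0$ because $g\geq 1$. With this $g$, your bookkeeping over countably many generators works as described. It even handles countable unions directly, a point the paper's single-generator argument leaves tacit.
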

\begin{proof}
Let $\psi$ be a $1-1$ and onto function and let $a(n)=n$. Take $X=M(a,\psi)=\{x:\forall^\infty n \;\; x(n)\neq \psi(x\upharpoonright n)\}$. We will show that $X\nsubseteq M(y,\bar{I})$ for any $x\in \mathbb{Z}^\omega $ and $I\in \mathbb{IP}.$ We will construct $z\in X\backslash M(y,\bar{I})$.

Let $z\in \mathbb{Z}^{\bigcup_{i<n}I_{2i}\cup I_{2i+1}}$ be such that:
\begin{itemize}
    \item $z_n(j)\neq\psi(z_n\upharpoonright j)$ for $j\in I_{2n}\cup I_{2n+1}$,
    \item  $z\upharpoonright I_{2n+1}=y\upharpoonright I_{2n+1}.$
\end{itemize}

We can construct such $z$, because for every $i\in I_{2n}\cup I_{2n+1}$ there exist finitely many $\tau_i\in \mathbb{Z}^{\max I_{2n+1}}$ such that: 
\begin{itemize}
   \item $z \upharpoonright   min I_{2n}\subseteq \tau_i, $
    \item  $\tau_i (j)=\psi (y\upharpoonright j)$ for $j\in I_{2n+1}$.
\end{itemize}


\end{proof}

The inclusions discussed above are illustrated in the diagram below. All inclusions are proper, with proof of $\mathcal{ED}\neq \mathcal{G}$ provided in chapter $4$ under assumption of $add(\mathcal{H})=\mathfrak{c}.$

\begin{center}
\begin{tikzpicture}[node distance=1.5cm, auto]
    \node (M) at (0, 4.5) {$\mathcal{M}$};
    \node (MM) at (0, 3) {$\mathcal{UM}$};
    \node (Mminus) at (0, 1.5) {$\mathcal{M}_-$};
    \node (IE) at (-2, 1.5) {$\mathcal{IE}$};
    \node (G) at (2, 1.5) {$\mathcal{G}$};
    \node (H) at (-2, 0) {$\mathcal{H}$};
    \node (ED) at (2, 0) {$\mathcal{ED}$};

    \draw[->] (H) -- (IE);
    \draw[->] (H) -- (Mminus);
    \draw[->] (ED) -- (Mminus);
    \draw[->] (ED) -- (G);
    \draw[->] (Mminus) -- (MM);
    \draw[->] (MM) -- (M);
\end{tikzpicture}
\end{center}

\section{Cardinal invariants of $\mathcal{M}_-$}

In this chapter, we will show that cardinal invariants of $\mathcal{M}_-$ are equal to the corresponding cardinal invariants of a $\sigma$-ideal of meager sets.
We say that family $F\subseteq  \omega^\omega$ is \emph{infinitely equal} if for every $g\in\omega^\omega$ we can find $f\in F$ such that $\exists^\infty n $ such that $f(n)=g(n).$ We say that family $F\subseteq \omega^\omega$ is \emph{eventually different} if for any $g\in \omega^\omega$ we can find $f\in F$ such that $\forall^\infty n $ we have $f(n)\neq g(n).$ 
We recall the combinatorial characterizations of the cardinal coefficients $non(\mathcal{M})$ and $cov(\mathcal{M})$ in terms of functions in the Baire space.

\begin{thm}(Bartoszyński, Miller \cite{BBB})\label{xd}
    \begin{itemize}
        \item $non(\mathcal{M})=min\{|F|: F\subseteq \omega^\omega \wedge F \text{ is infinitely equal}\}$,
        \item $cov(\mathcal{M})=min\{|F|: F\subseteq \omega^\omega \wedge F \text{ is eventually different}\}$.
    \end{itemize}

\end{thm}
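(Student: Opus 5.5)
This is a classical result that the paper cites from \cite{BBB}. I would prove it by moving between meager sets and single functions, with Theorem \ref{XD} (Talagrand) as the main tool. I treat each item as two inequalities. The easy inequality in each case follows from the observation that for fixed $g\in\omega^\omega$ both sets
$$\{f:\forall^\infty n\; f(n)\neq g(n)\} \quad\text{and}\quad \{f:\exists^\infty n\; f(n)=g(n)\}$$
are meager in $\omega^\omega$. The first is a countable union of closed nowhere dense sets. For the second, its complement $\{f:\forall^\infty n\, f(n)\neq g(n)\}$ is comeager, so the set itself is meager.

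\emph{Easy inequalities.} For $\min\{|F|: F \text{ infinitely equal}\}\le non(\mathcal{M})$, take a nonmeager $F\subseteq\omega^\omega$. For each $g$, the set $F$ is not contained in $\{f:\forall^\infty n\, f(n)\neq g(n)\}$, so some $f\in F$ agrees with $g$ infinitely often. For $\min\{|F|: F \text{ eventually different}\}\le cov(\mathcal{M})$, take meager sets $(M_\alpha)_{\alpha<\kappa}$ covering $\omega^\omega$. I use the standard fact that a meager set is contained in a countable union of sets of the form $\{g:\exists^\infty n\, g(n)=f(n)\}$, with one $f$ per branch of a countable family. This fact comes from the Baire-space version of Talagrand's characterization: code $g\upharpoonright I_n$ as a single natural number, so that ``$g$ avoids $x$ on almost all blocks'' becomes ``the coded function is eventually different from the code of $x$''. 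Collecting these $f$'s gives $\kappa\cdot\aleph_0$ functions, and every $g$ is infinitely equal to one of them. I still need to translate this into ``eventually different'' with the quantifiers as the statement defines them. I would do that by dualising through the same coding, swapping the roles of $g$ and $f$, since $f\neq^* g$ is exactly the negation of infinite equality.

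\emph{Hard inequalities.} I must show $non(\mathcal{M})\le\min\{|F|: F \text{ infinitely equal}\}$ and $cov(\mathcal{M})\le\min\{|F|: F\text{ eventually different}\}$. Given an infinitely equal $F$, I want to build a nonmeager set of size $|F|$. Using Theorem \ref{XD}, it suffices to find, for each pair $(x,\bar I)$, a real outside $M(x,\bar I)$, that is, a real $y$ with $y\upharpoonright I_n=x\upharpoonright I_n$ for infinitely many $n$. The plan is to code each pair $(x,\bar I)$ by a function $g_{x,\bar I}$. Here $g(k)$ codes the first interval $I_j$ with $\min I_j\ge k$ together with $x\upharpoonright I_j$. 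To each $f\in F$ I attach a countable set $Y_f$ of reals that copy the pieces coded by $f(k)$ whenever those pieces fit together. Infinite agreement of $f$ with $g_{x,\bar I}$ should then yield some $y\in Y_f$ that copies $x$ on infinitely many $I_j$, so that $\bigcup_f Y_f\notin\mathcal{M}$. First I would record that an infinitely equal family is unbounded. Indeed, for any $h$, choose $g>h$ everywhere; the $f$ agreeing with $g$ infinitely often is not dominated by $h$. Hence $|F|\ge\mathfrak b$, and Theorem \ref{XDXD} lets me replace the unknown partitions by at most $|F|$ many relevant ones. The covering inequality is the dual argument: from an eventually different family of size $\kappa$, build $\kappa$ meager sets $M(x_f,\bar I_f)$ covering $2^\omega$.

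\emph{Main obstacle.} The hard step is gluing the pieces $f(k)$ into a single real. The agreements of $f$ with $g_{x,\bar I}$ can happen at indices whose coded intervals overlap pieces already written from earlier, non-agreeing indices, which would destroy the copy of $x$. My first attempt is to let $Y_f$ consist of all reals obtained by writing the pieces greedily along each of countably many sparse subsequences of indices, with sparsity governed by a growth bound taken from $f$ itself. Then at least one subsequence should catch infinitely many agreements without conflicts. If this fails, I would fall back on the block-coding argument of Bartoszyński--Judah \cite{BBB}, grouping indices into blocks so that agreement on a block yields a whole interval copied.
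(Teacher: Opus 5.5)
Your proposal has two genuine gaps; the paper only cites this result from \cite{BBB}, so I measure it against the standard proof. First, the $cov$ part rests on a false claim. The set $\{f:\exists^\infty n\; f(n)=g(n)\}$ is the complement of the meager $F_\sigma$ set $\{f:\forall^\infty n\; f(n)\neq g(n)\}$, so it is comeager, not meager. (You in fact assert that the latter set is both meager and comeager.)

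Hence the ``standard fact'' you invoke is not the right one, and what you extract from a meager cover is an \emph{infinitely equal} family. No dualisation repairs this: negating infinite equality turns $\forall g\,\exists f\,\exists^\infty n\, f(n)=g(n)$ into a statement of the form $\exists g\,\forall f$, not into $\forall g\,\exists f\,\forall^\infty n\, f(n)\neq g(n)$. Also, the ``Baire-space version'' of Theorem \ref{XD} fails; this is exactly what the paper shows ($\mathcal{M}_-\subsetneq\mathcal{UM}\subsetneq\mathcal{M}$), so you must work in $2^\omega$.

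Write $\mathfrak{e}$ for the least size of an eventually different family. You have the directions swapped. The inequality $cov(\mathcal{M})\le\mathfrak{e}$ is trivial, since the sets $\{g: g\neq^* f\}$, $f\in F$, are meager and cover. The inequality $\mathfrak{e}\le cov(\mathcal{M})$ is the hard one, and you leave it unproved. Even with the correct form of Talagrand's characterization, a cover by sets $M(x_\alpha,\bar{I}^\alpha)$ only gives eventual difference of block codes relative to partitions that vary with $\alpha$, which is not an eventually different family in $\omega^\omega$.

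The standard route goes as follows. If $\kappa<\mathfrak{e}$ then $\kappa<\mathfrak{d}$, because a dominating family is eventually different. So there is one $\bar{J}$ such that every $\bar{I}^\alpha$ has infinitely many intervals inside single $\bar{J}$-blocks; apply non-domination to the partitions into $I^\alpha_{2n}\cup I^\alpha_{2n+1}$. One then needs a $z$ agreeing with each $x_\alpha$ on infinitely many of \emph{those} blocks, i.e.\ infinite equality with partial functions.

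Second, this same partial-function problem is your ``gluing'' obstacle for $non$, and you leave it open. The greedy sparse-subsequence idea is not justified. A non-agreeing index $k'$ may code a long interval such as $[k',2^{k'}]$, and once written such pieces can block every later agreement. Moreover, subsequences chosen from $f$ alone need not meet the agreement set.

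The missing idea is a decoding in which every agreement counts and nothing is overwritten. To a partial $p$ with domain $\{d_0<d_1<\cdots\}$ associate the total function $t_p(k)=$ code of $p\upharpoonright\{d_0,\dots,d_k\}$. From a total $h$, build $y_h$ as follows: at step $k$, take the finite partial function coded by $h(k)$ and copy its value at one point not assigned at earlier steps. This is always possible at an agreement, since at most $k$ points have been used and an agreement supplies $k+1$. Each $k$ with $h(k)=t_p(k)$ then yields a new point of $\mathrm{dom}(p)$ where $y_h$ agrees with $p$.

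This gives the partial version of $\kappa<\mathfrak{e}$ needed above. It also turns your infinitely equal $F$ into $\{y_f: f\in F\}$, which is infinitely equal to every partial function. Now use your correct remark that $|F|\ge\mathfrak{b}$ and take an unbounded family $\{\bar{J}^\beta:\beta<\mathfrak{b}\}$. Then every $\bar{I}$ has infinitely many intervals inside single blocks of some $\bar{J}^\beta$, again via pairs. Decode each $y_f$ blockwise along each $\bar{J}^\beta$. The resulting set has size $|F|$ and is contained in no $M(x,\bar{I})$, hence is nonmeager.
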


\begin{proposition}
    $cov(\mathcal{M}_-)= 
    cov(\mathcal{M})$ and $non(\mathcal{M}_-)=non(\mathcal{M})$.
\end{proposition}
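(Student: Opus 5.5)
The plan is to get the two easy inequalities from the inclusion $\mathcal{M}_-\subseteq\mathcal{M}$ and the two remaining ones from the inclusion $\mathcal{ED}\subseteq\mathcal{M}_-$, combined with the Bartoszyński–Miller characterizations in Theorem \ref{xd}. Throughout I identify $\mathbb{Z}^\omega$ with $\omega^\omega$ via a coordinatewise bijection $\mathbb{Z}\to\omega$. This is a homeomorphism, so it preserves meagerness, and it preserves the relations ``$x(n)=g(n)$'' and ``$x(n)\neq g(n)$'' coordinatewise. Hence Theorem \ref{xd} can be used verbatim in $\mathbb{Z}^\omega$, and the invariants of $\mathcal{M}$ computed in $\mathbb{Z}^\omega$ agree with those computed in $\omega^\omega$.

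\textbf{Easy inequalities.} By the diagram of inclusions we have $\mathcal{M}_-\subseteq\mathcal{UM}\subseteq\mathcal{M}$. A smaller ideal is harder to cover with and easier to leave. So any cover of $\mathbb{Z}^\omega$ by sets in $\mathcal{M}_-$ is a cover by meager sets, which gives $cov(\mathcal{M})\leq cov(\mathcal{M}_-)$. Likewise any set outside $\mathcal{M}$ is outside $\mathcal{M}_-$, which gives $non(\mathcal{M}_-)\leq non(\mathcal{M})$.

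\textbf{$cov(\mathcal{M}_-)\leq cov(\mathcal{M})$.} By Theorem \ref{xd}, fix an eventually different family $F$ with $|F|=cov(\mathcal{M})$. For any $g\in\mathbb{Z}^\omega$ there is $f\in F$ with $f(n)\neq g(n)$ for almost all $n$, i.e.\ $g\in E_f$. Hence $\{E_f: f\in F\}$ covers $\mathbb{Z}^\omega$. Each $E_f$ lies in $\mathcal{ED}\subseteq\mathcal{M}_-$ (indeed $E_f\subseteq M(f,\bar I)$ for any $\bar I$). Therefore $cov(\mathcal{M}_-)\leq|F|$.

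\textbf{$non(\mathcal{M})\leq non(\mathcal{M}_-)$.} Take $X\notin\mathcal{M}_-$ with $|X|=non(\mathcal{M}_-)$. Since $\mathcal{ED}\subseteq\mathcal{M}_-$, we have $X\not\subseteq E_g$ for every $g$. So there is $x\in X$ with $x(n)=g(n)$ for infinitely many $n$, which means $X$ is an infinitely equal family. Theorem \ref{xd} then gives $non(\mathcal{M})\leq|X|$.

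Combining these four inequalities gives both equalities. There is no real obstacle; the only point needing care is the transfer of Theorem \ref{xd} from $\omega^\omega$ to $\mathbb{Z}^\omega$, which the coordinatewise bijection handles.
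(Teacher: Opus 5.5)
Your proof is correct and follows essentially the same route as the paper: the easy inequalities come from $\mathcal{M}_-\subseteq\mathcal{M}$, and the other two come from the Bartoszyński--Miller characterizations together with $E_f\subseteq M(f,\bar I)$, which the paper uses directly through the sets $M(x_\alpha,\bar I)$. Your version is slightly more careful than the paper's in two places: the $non$ step correctly concludes \emph{infinitely often equal}, where the paper writes ``$x_\alpha=^*y$'', and you make the transfer between $\omega^\omega$ and $\mathbb{Z}^\omega$ explicit.
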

\begin{proof}

    Since $\mathcal{M}_- \subseteq\mathcal{M}$ we clearly have $cov(\mathcal{M}_-)\geqslant cov(\mathcal{M})$ and $non(\mathcal{M}_-)\leqslant non (\mathcal{M})$.

To prove the other inequality we will use characterization of $cov(\mathcal{M)} $ from \Cref{xd}. Let family $F=\{x_\alpha\in \omega^\omega :  \alpha<cov(\mathcal{M})\}$ be an eventually different family.

Take any interval partition $\bar{I}$ and let $M_\alpha=M(x_\alpha,\bar{I})$. We will show that $\bigcup M_\alpha =\omega^\omega$.
Take any $y\in\omega^\omega.$ Since family $F$ is of size $cov(\mathcal{M})$ we can find $x_\alpha \in F$ such that $x_\alpha\neq^*y$. So $y\in M(x_\alpha,I).$

    To prove the other inequality take set $\{x_\alpha: \alpha < non(\mathcal{M}_-)\}\notin M(y, \bar{I})$ for every $y\in \omega^\omega$ and any interval partition $\bar{I}.$ 
    So for every $y\in\omega^\omega$ there exists $x_\alpha$ such that $x_\alpha=^*y$. So the family $\{x_\alpha: \alpha<non(\mathcal{M}_-)\}$ is exactly the family from \Cref{xd}.
\end{proof}

\begin{thm}
     $add(\mathcal{M}_-)= add(\mathcal{M})$.
\end{thm}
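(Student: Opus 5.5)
The plan is to prove that $add(\mathcal{M}_-)=\min\{\mathfrak{b},cov(\mathcal{M})\}$. The equality with $add(\mathcal{M})$ then follows from the classical theorem of Miller and Truss that $add(\mathcal{M})=\min\{\mathfrak{b},cov(\mathcal{M})\}$, which I will quote. By the previous proposition $cov(\mathcal{M}_-)=cov(\mathcal{M})$, so it suffices to show two things: $add(\mathcal{M}_-)\leq \mathfrak{b}$, $add(\mathcal{M}_-)\leq cov(\mathcal{M}_-)$; and $add(\mathcal{M}_-)\geq\min\{\mathfrak{b},cov(\mathcal{M})\}$.

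\textbf{Upper bounds.} The bound $add(\mathcal{M}_-)\leq cov(\mathcal{M}_-)$ is general, since $\mathcal{M}_-$ is proper and contains singletons. For $add(\mathcal{M}_-)\leq\mathfrak{b}$, I take an unbounded family $F\subseteq\mathbb{IP}$ of size $\mathfrak{b}$ from \Cref{XDXD} and consider the sets $M(0,\bar I)$ for $\bar I\in F$. Suppose their union lies in $\bigcup_k M(x_k,\bar J^k)$. I choose $\bar J$ coarser than every $\bar J^k$, so that each $J_n$ contains a block of $\bar J^k$ for $k\le n$.

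The key lemma, a Baire-space version of the fourth item of \Cref{XD}, is the following: if $\bar I\not\sqsubseteq^*\bar J$, then $M(0,\bar I)\not\subseteq\bigcup_k M(x_k,\bar J^k)$. To prove it, I use the infinitely many $n$ for which $J_n$ contains no full $I_m$, and thin them out to a sparse set $A$. For $n\in A$, I let $x$ copy a suitable $x_k$ on a $\bar J^k$-block inside $J_n$, with each $k$ used infinitely often. Every $I_m$ meeting such a region sticks out of $J_n$, and sparseness leaves free coordinates there. On all free coordinates I put nonzero values, so every $I_m$ has a coordinate where $x\neq 0$. Then $x\in M(0,\bar I)$ but $x\notin M(x_k,\bar J^k)$ for every $k$. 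Hence every $\bar I\in F$ satisfies $\bar I\sqsubseteq^*\bar J$, contradicting the unboundedness of $F$.

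\textbf{Lower bound.} Let $\kappa<\min\{\mathfrak{b},cov(\mathcal{M})\}$ and take generators $M(x_\alpha,\bar I^\alpha)$, $\alpha<\kappa$; countable unions are absorbed by refining the index set.

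First, since $\kappa<\mathfrak{b}$, by \Cref{XDXD} there is $\bar J$ with $\bar I^\alpha\sqsubseteq^*\bar J$ for all $\alpha$, so $M(x_\alpha,\bar I^\alpha)\subseteq M(x_\alpha,\bar J)$.

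Second, I view each $x_\alpha$ as the function $n\mapsto x_\alpha\upharpoonright J_n$, which takes values in a countable set. Since $\kappa<cov(\mathcal{M})$, the Bartoszyński–Miller characterization \Cref{xd} (transported to a countable alphabet) gives $y$ such that for each $\alpha$ we have $y\upharpoonright J_n=x_\alpha\upharpoonright J_n$ for infinitely many $n$.

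Third, define $g_\alpha(m)$ to be the next such agreement index after $m$. Since $\kappa<\mathfrak{b}$, these functions are dominated, and this yields a partition $\bar J'$ whose blocks are unions of $J$-blocks such that, for each $\alpha$, almost every $J'_n$ contains some $J_m$ on which $y$ agrees with $x_\alpha$. Now if $z\in M(x_\alpha,\bar J)$, then $z$ differs from $x_\alpha$, and hence from $y$, on that $J_m\subseteq J'_n$. So $z\in M(y,\bar J')$, and the whole union lies in the single generator $M(y,\bar J')$.

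I expect the main obstacle to be the combinatorial lemma in the upper bound: the construction of $x$ avoiding all the $M(x_k,\bar J^k)$. The delicate point is handling the overlap between $I$-blocks and the chosen $J$-blocks so that every $I_m$ still has a free coordinate to make $x$ nonzero. The lower bound is a fairly direct transcription of the Cantor-space argument.
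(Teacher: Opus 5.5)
Your proof is correct and follows the paper's route. Both arguments reduce to $\min\{\mathfrak{b},cov(\mathcal{M})\}$, get $add(\mathcal{M}_-)\le\mathfrak{b}$ from the sets $M(0,\bar I)$ over an unbounded family of interval partitions, get $add(\mathcal{M}_-)\le cov(\mathcal{M})$ via $cov(\mathcal{M}_-)=cov(\mathcal{M})$, and for the lower bound find a point agreeing infinitely often with every $x_\alpha$ and then use $\kappa<\mathfrak{b}$ to put the whole union into a single generator. Where you differ, it is in your favour: you handle the $\sigma$-generation explicitly and prove the Baire-space version of the fourth item of Theorem~\ref{XD}, whereas the paper simply takes the union to be one $M(y,\bar J)$ and cites the Cantor-space statement; in the lower bound you spend an extra $\mathfrak{b}$-domination to unify the partitions first, while the paper picks $z\notin\bigcup_\alpha M_\alpha$ directly and dominates only once.
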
 
\begin{proof}
Recall that $\add(\mathcal{M})=min\{\mathfrak{b},cov(\mathcal{M})\}$.
We first show that
$add(\mathcal{M}_-)\leq \mathfrak{b}$. We will use characterization of $\mathfrak{b}$ from \ref{XDXD}.

   We want to construct family  $\{M_\alpha : \alpha < \mathfrak{b}\}\subseteq \mathcal{M}_-$ such that $\bigcup_{\alpha} M_{\alpha}\notin \mathcal{M}_-.$  

Take $\bar{I}^\alpha$, unbounded family of interval partitions. Define $M_\alpha=M(0,\bar{I}^\alpha)$.

Assume that there exist $X\in \mathcal{M}_-$ such that $\bigcup_\alpha M_\alpha =X $ and $X=M(y,\bar{J})$. Then for every $\alpha $ we have $M(0,\bar{I}^\alpha) \subseteq M(y, \bar{J})$. Using \ref{XD} we get  that for all $\alpha$ $\bar{I}^\alpha\sqsubseteq^*\bar{J}$. That is contradiction, because $\bar{I}^\alpha $ was unbounded.
                
 Next we will show $add(\mathcal{M}_-) \leq cov(\mathcal{M})$.
  Since $cov(\mathcal{M}_-)= cov(\mathcal{M})$ we have that $add(\mathcal{M}_-)\leq cov(\mathcal{M}_-)\leq cov(\mathcal{M}).$

       To prove the other inequality assume  $add(\mathcal{M}_-)< add(\mathcal{M})=min\{\mathfrak{b}, cov(\mathcal{M})\}$. 
       
       Take $(M_\alpha:\alpha<\kappa<add(\mathcal{M}))$ and $M_\alpha=M(x_\alpha,\bar{I}^\alpha).$ We will show that $\bigcup_{\alpha<\kappa} M_\alpha\in\mathcal{M}_-$.

   Since $\kappa<cov(\mathcal{M})$ there exists $z\notin\bigcup M_\alpha$. Hence, $\forall\alpha<\kappa\;\; \exists^\infty n \;\; z\upharpoonright I_n^\alpha=x_\alpha \upharpoonright I_n^\alpha$.
   
   Take $(\bar{J}^\alpha:\alpha<\kappa) $ such that $\forall n\; \exists k\;(I_k^\alpha \subseteq J_n^\alpha \wedge z\upharpoonright I_k^\alpha= x_\alpha\upharpoonright I_n^\alpha)$. 
Since $\kappa<\mathfrak{b}$ there is  $\bar{R}\in\mathbb{IP}$  such that $\forall\alpha \;\; \bar{J}^\alpha \sqsubseteq^* \bar{R}$. 
   So, $\bigcup_{\alpha<\kappa} M_\alpha \subseteq M(z,\bar{R})$.
\end{proof}

To show $cof(\mathcal{M}_-)=cof(\mathcal{M})$ we will need Tukey reduction. We recall basic definitions below.

\begin{df}

Let $R_0=(X_0, \sqsubseteq_0, Y_0), \;\; R_1=(X_1,\subseteq_1, Y_1)$, where    $X_0, X_1, Y_0, Y_1$ are sets and $\sqsubseteq_o, \subseteq_1 $ relations. We say that $R_0\leq_T R_1$ if there exist:

\begin{itemize}
    \item $\varphi: X_0\rightarrow X_1$
    \item $\psi: Y_1 \rightarrow Y_0$
\end{itemize}

 such that for all $y\in Y_0, z\subseteq X_0 $ if $\phi[z]\sqsubseteq_1 y$ then $z\subseteq_1 \psi (y)$.
 \end{df}

\begin{df}
$R_0 \otimes R_1 =(X_0 \times X_1 ^{Y_0}, \sqsubseteq_\otimes, Y_0\times Y_1)$, where  
for $x\in X_0, f\in X_1^{Y_0}, y\in Y_0, z\in Y_1$ we have $(x,f)\sqsubseteq_\otimes (y,z)$ if and only if $x\sqsubseteq_0y \wedge f(y)\subseteq_1 z.$
\end{df}

\begin{thm}
    If for ideals $\mathcal{I},\mathcal{J}$ we get $\mathcal{I}\leq_T \mathcal{J}$, then:
\begin{itemize}
    \item $add(\mathcal{I})\geq \add (\mathcal{J})$
    \item $cof(\mathcal{I})\leq cof(\mathcal{J})$
\end{itemize}

\end{thm}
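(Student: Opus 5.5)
We read the statement in the standard way: an ideal $\mathcal{I}$ on $X$ is identified with the relational structure $(\mathcal{I},\subseteq,\mathcal{I})$. Then $\mathcal{I}\leq_T\mathcal{J}$ means there are maps $\varphi:\mathcal{I}\to\mathcal{J}$ and $\psi:\mathcal{J}\to\mathcal{I}$ such that for all $A\in\mathcal{I}$ and $B\in\mathcal{J}$,
$$\varphi(A)\subseteq B\ \Longrightarrow\ A\subseteq\psi(B).$$
In this language $add(\mathcal{I})$ is the least size of an unbounded family $\mathcal{A}\subseteq\mathcal{I}$, i.e.\ one with no single $B\in\mathcal{I}$ containing every member. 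Likewise $cof(\mathcal{I})$ is the least size of a dominating (cofinal) family. Both bullets then come from transporting families along $\varphi$ and $\psi$.

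For the cofinality bound, we take a cofinal family $\mathcal{B}\subseteq\mathcal{J}$ with $|\mathcal{B}|=cof(\mathcal{J})$ and show that $\psi[\mathcal{B}]$ is cofinal in $\mathcal{I}$. Given $A\in\mathcal{I}$, we have $\varphi(A)\in\mathcal{J}$, so there is $B\in\mathcal{B}$ with $\varphi(A)\subseteq B$. The Tukey property then gives $A\subseteq\psi(B)$. Since $|\psi[\mathcal{B}]|\leq|\mathcal{B}|$, this yields $cof(\mathcal{I})\leq cof(\mathcal{J})$.

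For the additivity bound, we take $\mathcal{A}\subseteq\mathcal{I}$ with $|\mathcal{A}|=add(\mathcal{I})$ and $\bigcup\mathcal{A}\notin\mathcal{I}$, and argue by contraposition that $\varphi[\mathcal{A}]$ is unbounded in $\mathcal{J}$. Suppose instead there were $B\in\mathcal{J}$ with $\varphi(A)\subseteq B$ for all $A\in\mathcal{A}$, for instance $B=\bigcup\varphi[\mathcal{A}]$ if that union lies in $\mathcal{J}$. Then $A\subseteq\psi(B)$ for every $A\in\mathcal{A}$. Hence $\bigcup\mathcal{A}\subseteq\psi(B)\in\mathcal{I}$, and since ideals are closed under subsets, $\bigcup\mathcal{A}\in\mathcal{I}$, a contradiction. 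So $\bigcup\varphi[\mathcal{A}]\notin\mathcal{J}$, as $\varphi[\mathcal{A}]$ consists of members of $\mathcal{J}$ with no bound in $\mathcal{J}$. Because $|\varphi[\mathcal{A}]|\leq|\mathcal{A}|$, we get $add(\mathcal{J})\leq add(\mathcal{I})$.

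The only step needing care is the translation between the union-based definitions of $add$ and $cof$ in the introduction and the bounding/dominating formulation in the Tukey setting. For $cof$ the two coincide literally. For $add$ they coincide because a family is bounded in an ideal exactly when its union lies in the ideal: if the union is in the ideal it is itself a bound, and if some $B$ in the ideal bounds the family, the union is a subset of $B$. Once this is noted, both inequalities are immediate, with no further obstacle.
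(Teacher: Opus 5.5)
Your proof is correct. The paper states this as a standard fact without proof, and your argument is exactly the standard one it relies on: push an $add$-witness forward along $\varphi$, pull a cofinal family back along $\psi$, and use that a family is bounded in an ideal iff its union lies in it. Your reading of the paper's somewhat garbled definition of $\leq_T$, namely $\varphi(A)\subseteq B\Rightarrow A\subseteq\psi(B)$ for $\varphi:\mathcal{I}\to\mathcal{J}$ and $\psi:\mathcal{J}\to\mathcal{I}$, is the intended one.
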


\begin{thm}
    $(\mathcal{M}_-,\notin, \omega^\omega)\otimes(\mathbb{IP},\sqsubseteq^*,\mathbb{IP})\geq_T(\mathcal{M}_-,\subseteq, \mathcal{M}_-)$
\end{thm}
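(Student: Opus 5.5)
We will exhibit the two maps required by the definition of $\leq_T$, in the same spirit as the last part of the proof that $add(\mathcal{M}_-)=add(\mathcal{M})$. Throughout, we read $(\mathcal{M}_-,\notin,\omega^\omega)$ as the relation that holds between $A\in\mathcal{M}_-$ and a point $z$ exactly when $z\notin A$. We read $(\mathbb{IP},\sqsubseteq^*,\mathbb{IP})$ with the order $\sqsubseteq^*$ defined earlier. On the left we therefore need
$$\varphi:\mathcal{M}_-\to \mathcal{M}_-\times \mathbb{IP}^{\mathbb{Z}^\omega},\qquad \psi:\mathbb{Z}^\omega\times\mathbb{IP}\to\mathcal{M}_- .$$
They must satisfy: whenever $\varphi(A)=(B,f)$, $z\notin B$ and $f(z)\sqsubseteq^*\bar{R}$, we have $A\subseteq\psi(z,\bar{R})$.

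The first step is a preliminary reduction: every $A\in\mathcal{M}_-$ is contained in a single generator $M(x,\bar{I})$. Given $A\subseteq\bigcup_j M(x_j,\bar{I}^j)$, choose $\bar{I}$ so that for all $j\le n$ each $I_n$ contains an interval $I^j_k$ for every $j\leq n$. Interleave the $x_j$ on the sub-intervals, exactly as in the proofs that $\mathcal{G}$ and $\mathcal{SMZ}^+$ are $\sigma$-ideals, to get one $x$. Then every $y\in M(x_j,\bar{I}^j)$ differs from $x$ on almost every $I_n$. Using the axiom of choice, fix for each $A$ such a pair $(x_A,\bar{I}_A)$.

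Next we define the maps. Put $\varphi(A)=(M(x_A,\bar{I}_A),f_A)$. Here $f_A(z)$ is defined as follows when $z\notin M(x_A,\bar{I}_A)$: there are then infinitely many $k$ with $z\upharpoonright I_{A,k}=x_A\upharpoonright I_{A,k}$, and we let $f_A(z)=\bar{J}$ be an interval partition such that every $J_n$ contains some such $I_{A,k}$. For $z\in M(x_A,\bar{I}_A)$ let $f_A(z)$ be arbitrary. Put $\psi(z,\bar{R})=M(z,\bar{R})$.

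Verification. Suppose $z\notin M(x_A,\bar{I}_A)$ and $\bar{J}=f_A(z)\sqsubseteq^*\bar{R}$. Take $y\in A\subseteq M(x_A,\bar{I}_A)$, so $y\upharpoonright I_{A,k}\neq x_A\upharpoonright I_{A,k}$ for all $k\geq k_0$. For almost every $n$, the interval $R_n$ contains some $J_m$ with $m$ large, and that $J_m$ contains some $I_{A,k}$ with $k\geq k_0$ and $z\upharpoonright I_{A,k}=x_A\upharpoonright I_{A,k}$. Hence $y\upharpoonright I_{A,k}\neq z\upharpoonright I_{A,k}$, so $y\upharpoonright R_n\neq z\upharpoonright R_n$. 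Since this holds for almost every $n$, $y\in M(z,\bar{R})=\psi(z,\bar{R})$, which is what the definition of $\leq_T$ requires.

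The main obstacle I expect is the preliminary reduction to a single generator, which needs a careful diagonal choice of $\bar{I}$ and $x$. A second point needing care is the "$m$ large" clause above: $\bar{J}\sqsubseteq^*\bar{R}$ only says that almost every $R_n$ contains some $J_m$. Since the $J_m$ sit inside disjoint consecutive intervals $R_n$, the index $m$ tends to infinity with $n$. Likewise the $I_{A,k}$ inside $J_m$ has $k\to\infty$. So the cofinitely many exceptions are harmless.
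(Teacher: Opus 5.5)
Your proposal is correct and is essentially the paper's proof: the same $\psi(z,\bar R)=M(z,\bar R)$, the same $\varphi(A)=(M(x_A,\bar I_A),f_A)$ with $f_A(z)$ a partition each of whose pieces contains an $\bar I_A$-interval on which $z$ agrees with $x_A$, and the same verification. The one addition is that you explicitly justify that every $A\in\mathcal{M}_-$ lies inside a single generator, which the paper simply assumes; when interleaving there, pick the intervals $I^j_k\subseteq I_n$ for $j\le n$ pairwise disjoint, which is always possible by choosing them one after another.
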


\begin{proof}
We will need two functions:
\begin{itemize}
    \item $\varphi:\mathcal{M}_-\rightarrow \mathcal{M}_- \times \mathbb{IP}^{\omega^\omega}$

    \item $\psi:\omega^\omega\times\mathbb{IP} \rightarrow \mathcal{M}_-$
\end{itemize}
Let $\psi(x,\bar{I})=M(x,\bar{I})\in \mathcal{M}_-$. Let $\varphi(N)=(\tilde{N},f)\in \mathcal{M}_- \times \mathbb{IP}^{\omega^\omega}$, where:
\begin{itemize}
    \item $N\subseteq\tilde{N}=M(y, \bar{J})$
    \item $f(x)=\bar{R}$ such that:
    \begin{enumerate}
        \item if $x\in \tilde{N}$ we can take anything
        \item if $x\notin \tilde{N}$ take $\bar{R}$ such that $\bar{J}\sqsubseteq^* \bar{R}$ and $\forall n \exists k \left( J_k \subseteq R_n \right)$ and $x\upharpoonright J_k= y \upharpoonright J_k$
    \end{enumerate}
\end{itemize}

We have to show that $\varphi (N) \sqsubseteq_\otimes (x, \bar{I}) \Longrightarrow N\subseteq\psi(x,\bar{I})=M(x,\bar{I})$

Since $\varphi (N) \sqsubseteq_\otimes (x, \bar{I})$ we know that:

\begin{itemize}
    \item $x\notin \tilde{N}$
    \item $f(x) \sqsubseteq^{*} \bar{I} $
\end{itemize}

So we have that $\exists^\infty n \;\; x\upharpoonright J_n =y\upharpoonright J_n$, and $\forall n \exists k (J_k \subseteq R_n  \wedge x\upharpoonright J_k =y \upharpoonright J_k)$. 

So $\forall^{\infty} n \exists k (R_k\subseteq I_n \wedge x\upharpoonright R_k=y\upharpoonright R_k).$ 

Take any $w\in N\subseteq \tilde{N} =M(y,\bar{J})$. So $\forall ^{\infty} n \;\; w\upharpoonright J_n \neq y \upharpoonright J_n $. So we get $w\in M(x, \bar{I})=\psi(x, \bar{I}).$
\end{proof}

\begin{cor}
$add(\mathcal{M}_-)\geq add(\mathcal{M})$, $cof(\mathcal{M_-})\leq cof(\mathcal{M})$.

\end{cor}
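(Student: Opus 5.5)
The corollary should follow by reading off the two cardinal characteristics of each side of the Tukey reduction in the last theorem, $(\mathcal{M}_-,\notin,\omega^\omega)\otimes(\mathbb{IP},\sqsubseteq^*,\mathbb{IP})\geq_T(\mathcal{M}_-,\subseteq,\mathcal{M}_-)$. For a relational system $R=(X,\sqsubseteq,Y)$ I will use the two standard norms. The first is the \emph{unbounding number} $\mathfrak{b}(R)$: the least size of $Z\subseteq X$ with no single $y\in Y$ bounding all of $Z$. The second is the \emph{dominating number} $\mathfrak{d}(R)$: the least size of $D\subseteq Y$ bounding every $x\in X$. The plan is first to record the general monotonicity: if $R_0\leq_T R_1$ via $(\varphi,\psi)$, then $\mathfrak{b}(R_0)\geq\mathfrak{b}(R_1)$ and $\mathfrak{d}(R_0)\leq\mathfrak{d}(R_1)$. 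Both are checked directly. If $D$ dominates in $R_1$, then $\psi[D]$ dominates in $R_0$. If $Z\subseteq X_0$ is unbounded in $R_0$, then $\varphi[Z]$ is unbounded in $R_1$, since a bound $y$ for $\varphi[Z]$ would give the bound $\psi(y)$ for $Z$. This is the statement of the preceding theorem on ideals, extended to arbitrary relational systems. For $(\mathcal{M}_-,\subseteq,\mathcal{M}_-)$ these norms are exactly $add(\mathcal{M}_-)$ and $cof(\mathcal{M}_-)$.

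Next I compute the norms of the factors. For $(\mathcal{M}_-,\notin,\omega^\omega)$, a bounded family is one avoided by a single point, so $\mathfrak{b}=cov(\mathcal{M}_-)$. A dominating set of points is one not contained in any member of $\mathcal{M}_-$, so $\mathfrak{d}=non(\mathcal{M}_-)$. For $(\mathbb{IP},\sqsubseteq^*,\mathbb{IP})$, Theorem \ref{XDXD} gives $\mathfrak{b}$ and $\mathfrak{d}$.

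For the sequential composition I will show $\mathfrak{b}(R_0\otimes R_1)\geq\min\{\mathfrak{b}(R_0),\mathfrak{b}(R_1)\}$ and $\mathfrak{d}(R_0\otimes R_1)\leq\max\{\mathfrak{d}(R_0),\mathfrak{d}(R_1)\}$.

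For the first inequality, take fewer than $\min\{\mathfrak{b}(R_0),\mathfrak{b}(R_1)\}$ pairs $(x_\alpha,f_\alpha)$. Choose $y\in Y_0$ bounding all $x_\alpha$. The values $f_\alpha(y)$ are again fewer than $\mathfrak{b}(R_1)$ many, so some $z\in Y_1$ bounds them, and then $(y,z)$ bounds every pair.

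For the second inequality, take $D_0$ and $D_1$ dominating, and form $D_0\times D_1$. Given $(x,f)$, pick $y\in D_0$ with $x\sqsubseteq_0 y$, then $z\in D_1$ with $f(y)\sqsubseteq_1 z$. This step, in which the second coordinate may depend on the chosen $y$, is exactly why $\otimes$ is used rather than a plain product. I expect this to be the only point needing care, and it is routine.

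Combining, we get $add(\mathcal{M}_-)\geq\min\{cov(\mathcal{M}_-),\mathfrak{b}\}$ and $cof(\mathcal{M}_-)\leq\max\{non(\mathcal{M}_-),\mathfrak{d}\}$. By the proposition $cov(\mathcal{M}_-)=cov(\mathcal{M})$ and $non(\mathcal{M}_-)=non(\mathcal{M})$, and by the classical Miller/Truss--Fremlin formulas $add(\mathcal{M})=\min\{\mathfrak{b},cov(\mathcal{M})\}$ and $cof(\mathcal{M})=\max\{\mathfrak{d},non(\mathcal{M})\}$. Together these give $add(\mathcal{M}_-)\geq add(\mathcal{M})$ and $cof(\mathcal{M}_-)\leq cof(\mathcal{M})$.
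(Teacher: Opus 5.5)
Your proposal is correct and is the argument the paper leaves implicit: the corollary is read off from the Tukey reduction $(\mathcal{M}_-,\notin,\omega^\omega)\otimes(\mathbb{IP},\sqsubseteq^*,\mathbb{IP})\geq_T(\mathcal{M}_-,\subseteq,\mathcal{M}_-)$ via $\mathfrak{b}(R_0\otimes R_1)\geq\min\{cov(\mathcal{M}_-),\mathfrak{b}\}$, $\mathfrak{d}(R_0\otimes R_1)\leq\max\{non(\mathcal{M}_-),\mathfrak{d}\}$ and the classical formulas for $add(\mathcal{M})$ and $cof(\mathcal{M})$, and you supply the relational-system details that the paper omits. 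In the last step you do not need the paper's proposition: the trivial inequalities $cov(\mathcal{M}_-)\geq cov(\mathcal{M})$ and $non(\mathcal{M}_-)\leq non(\mathcal{M})$, which follow from $\mathcal{M}_-\subseteq\mathcal{M}$, are enough.
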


\begin{proposition}
    $cof(\mathcal{M_-})\geq cof(\mathcal{M})$.
\end{proposition}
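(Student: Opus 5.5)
We use $cof(\mathcal{M})=\max\{\mathfrak{d},non(\mathcal{M})\}$ and prove the two inequalities $cof(\mathcal{M}_-)\geq\mathfrak{d}$ and $cof(\mathcal{M}_-)\geq non(\mathcal{M})$ separately.

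\textbf{The bound $cof(\mathcal{M}_-)\geq non(\mathcal{M})$.} This follows from the general inequality $cof(\mathcal{I})\geq non(\mathcal{I})$, valid for any $\sigma$-ideal containing all singletons and not containing the whole space. Let $\mathcal{B}$ be a cofinal family of size $cof(\mathcal{M}_-)$. Since each $B\in\mathcal{B}$ is a proper subset, choose $x_B\notin B$. The set $\{x_B: B\in\mathcal{B}\}$ is contained in no $B\in\mathcal{B}$, hence in no member of $\mathcal{M}_-$. Therefore $non(\mathcal{M}_-)\leq cof(\mathcal{M}_-)$. The preceding proposition gives $non(\mathcal{M}_-)=non(\mathcal{M})$, so $cof(\mathcal{M}_-)\geq non(\mathcal{M})$.

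\textbf{The bound $cof(\mathcal{M}_-)\geq\mathfrak{d}$.} We use the characterization of $\mathfrak{d}$ by interval partitions from Theorem \ref{XDXD}, together with the fourth item of Theorem \ref{XD}; the latter must be valid in $\mathbb{Z}^\omega$, and we assume it, as the proof of $add(\mathcal{M}_-)\leq\mathfrak{b}$ already does.

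Let $\mathcal{B}$ be cofinal in $\mathcal{M}_-$ with $|\mathcal{B}|=cof(\mathcal{M}_-)$. Every element of $\mathcal{M}_-$ is a countable union of generators, and the argument in the proof of $add(\mathcal{M}_-)\geq add(\mathcal{M})$ (combining the $\bar{J}$'s into a common bound) shows that countably many sets $M(x_i,\bar{I}^i)$ lie inside a single $M(z,\bar{R})$. We may therefore assume each $B\in\mathcal{B}$ is of the form $M(y_B,\bar{J}^B)$.

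Now take any $\bar{I}\in\mathbb{IP}$. The set $M(0,\bar{I})$ is contained in some $M(y_B,\bar{J}^B)$, so by Theorem \ref{XD} we get $\bar{I}\sqsubseteq^*\bar{J}^B$. Hence $\{\bar{J}^B: B\in\mathcal{B}\}$ is a dominating family of interval partitions, and Theorem \ref{XDXD} gives $\mathfrak{d}\leq cof(\mathcal{M}_-)$.

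\textbf{Conclusion and main obstacle.} Combining the two bounds, $cof(\mathcal{M}_-)\geq\max\{\mathfrak{d},non(\mathcal{M})\}=cof(\mathcal{M})$. The delicate step is the reduction of cofinal sets to single generators. The point to verify is that a countable union of sets $M(x_i,\bar{I}^i)$ is contained in one $M(z,\bar{R})$. We obtain $z$ by diagonalizing, choosing $z$ to agree with $x_i$ on infinitely many blocks of $\bar{I}^i$ for every $i$, which is possible because the family is countable. We then choose $\bar{R}$ so that each of its intervals contains such an agreeing block for every $i\leq n$. The rest of the argument is routine.
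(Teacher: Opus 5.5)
Your proof is correct and takes essentially the same route as the paper. You split $cof(\mathcal{M})=\max\{\mathfrak{d},non(\mathcal{M})\}$, obtain the $non$ bound from $cof(\mathcal{M}_-)\geq non(\mathcal{M}_-)=non(\mathcal{M})$, and obtain the $\mathfrak{d}$ bound by showing, via the fourth item of Theorem \ref{XD}, that the partitions of generators covering a cofinal family form a dominating family. You also justify two steps the paper uses silently (the inequality $cof\geq non$, and that every member of $\mathcal{M}_-$ lies inside a single generator $M(z,\bar{R})$), and your diagonal construction for the latter is sound.
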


\begin{proof}
We will use the fact that $cof(\mathcal{M})=max\{\mathfrak{d},non(\mathcal{M})\}$

First we will show $cof(\mathcal{M}_-)\geq non(\mathcal{M})$.

        Since $non(\mathcal{M_-})=non(\mathcal{M})$ we have $cof(\mathcal{M}_-)\geq non(\mathcal{M}_-)=non(\mathcal{M}).$

 Next we will show $cof(\mathcal{M}_-)\geq \mathfrak{d}$. Again we will use characterization from \ref{XDXD}

        Take family $\mathcal{B}\subseteq\mathcal{M}_-$ such that $\forall A
        \in \mathcal{M}_- \exists B_\alpha\in \mathcal{B} $ and $A\subseteq B_\alpha.$ Let $B_\alpha\subseteq M(x_\alpha, \bar{I}^\alpha).$ Take any partition $\bar{J}$ and $A\subseteq M(y,\bar{J})$. There exists $B_\alpha$ such that $A\subseteq B_\alpha$; so $M(y,\bar{J})\subseteq M(x_\alpha, \bar{I}^\alpha)$ and so $\bar{J}\sqsubseteq^*\bar{I}^\alpha.$ And we get that $\{\bar{I}^\alpha:\alpha < cof(\mathcal{M}_-)\}$ is a dominating family.
\end{proof}

\begin{df}
    For $\sigma$-ideal $\mathcal{I}$ on a Polish space $X$ we define \emph{transitive covering} of $\mathcal{I}$ as:
    $$cov_t(\mathcal{I})=min\{|\mathcal{A}|: \exists Y \in \mathcal{I} \;\; Y+\mathcal{A}=X\}.$$
\end{df}

Of course for any $\sigma$-ideal $\mathcal{I}$ we have $cov_{t}(\mathcal{I})\geq cov(\mathcal{I})$ $cov_t(\mathcal{I})=non(\mathcal{I})^*.$

In \cite{MILLER200652} authors showed that $cov_t(\mathcal{M})$ on $\mathbb{Z}^\omega= cov(\mathcal{M})$. We can prove the same with $\mathcal{M}_-.$

\begin{proposition}\label{covt}
    $cov_t(\mathcal{M}_-)=cov(\mathcal{M}).$
\end{proposition}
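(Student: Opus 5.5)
We prove the two inequalities $cov_t(\mathcal{M}_-)\geq cov(\mathcal{M})$ and $cov_t(\mathcal{M}_-)\leq cov(\mathcal{M})$ separately.

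\textbf{Lower bound.} This is immediate: $cov_t(\mathcal{M}_-)\geq cov(\mathcal{M}_-)$, since translates of a single set in $\mathcal{M}_-$ are again in $\mathcal{M}_-$. By the earlier proposition, $cov(\mathcal{M}_-)=cov(\mathcal{M})$.

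\textbf{Upper bound.} We must find one $Y\in\mathcal{M}_-$ and a family $\mathcal{A}$ with $|\mathcal{A}|=cov(\mathcal{M})$ and $Y+\mathcal{A}=\mathbb{Z}^\omega$. Take $Y=M(0,\bar{I})$, where $\bar{I}$ is the partition into singletons. Then
$$Y=\{y:\forall^\infty n\;\; y(n)\neq 0\}=E_0.$$
By the last item of Theorem \ref{XD}, for any $a$ we have $Y+a=M(a,\bar{I})=E_a$.

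By the characterization of $cov(\mathcal{M})$ in Theorem \ref{xd}, fix an eventually different family $F\subseteq\omega^\omega$ of size $cov(\mathcal{M})$: for every $g$ there is $f\in F$ with $f(n)\neq g(n)$ for almost all $n$. For any $x\in\mathbb{Z}^\omega$ we want $f\in F$ with $x\in E_f$, i.e.\ $x\neq^* f$.

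\textbf{Main obstacle.} The family $F$ lives in $\omega^\omega$, while $x$ ranges over $\mathbb{Z}^\omega$. Since $\mathbb{Z}$ and $\omega$ are both countable, fix a bijection $e:\mathbb{Z}\to\omega$ applied coordinatewise. Given $x$, apply the eventually different property to $g=e\circ x$ to get $f\in F$ with $f\neq^* e\circ x$. Then $e^{-1}\circ f\neq^* x$, because $e$ is a bijection.

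Set $\mathcal{A}=\{e^{-1}\circ f: f\in F\}$. Every $x$ then lies in some $E_{e^{-1}\circ f}=Y+e^{-1}\circ f$, so $Y+\mathcal{A}=\mathbb{Z}^\omega$ and $cov_t(\mathcal{M}_-)\leq cov(\mathcal{M})$. Combined with the lower bound, this proves the proposition.

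This in fact also shows that $cov_t(\mathcal{ED})=cov(\mathcal{M})$, since $E_0\in\mathcal{ED}\subseteq\mathcal{M}_-$.
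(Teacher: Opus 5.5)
Your proof is correct and follows essentially the same route as the paper's. The lower bound comes from $cov_t \ge cov$ together with $cov(\mathcal{M}_-)=cov(\mathcal{M})$ (or directly from $\mathcal{M}_-\subseteq\mathcal{M}$). The upper bound translates a single set $M(0,\bar I)$ by an eventually different family of size $cov(\mathcal{M})$. The differences are cosmetic: you fix $\bar I$ to be the partition into singletons and argue directly rather than by contradiction, and you explicitly handle the passage between $\omega^\omega$ and $\mathbb{Z}^\omega$ by a coordinatewise bijection, which the paper glosses over.
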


\begin{proof}

        We only need to show the inequality  $cov_t(\mathcal{M}_-)<cov(\mathcal{M})$.

        Take $F=\{x_\alpha: \alpha< cov(\mathcal{M})\}$ and $N\subseteq M(0,\bar{I})$ for some $\bar{I}\in \mathbb{IP}.$ So $N+F=\bigcup_{\alpha<cov(\mathcal{M})} (x_\alpha, \bar{I})$. Assume that there exists $y\notin N+F$. So, $\forall \alpha\exists^\infty _n \;\; y\upharpoonright I_n =x_\alpha \upharpoonright I_n$, so $\forall \alpha \exists ^\infty_n \;\; y(n)=x_\alpha(n)$. Contradiction, because $F$ is of size $cov(\mathcal{M}).$   
\end{proof}

\section{Operation $^*$}

The operation $^*$ was first defined and investigated in \cite{Seredyski1989SomeOR} by Seredyński. It was later studied  by Pawlikowski and Sabok who showed in \cite{Pawlikowski2008-PAWTS} that $Count = Count^{**}$. Moreover, Solecki in \cite{solecki} constructed a $\sigma$-ideal $\mathcal{I}$ satisfying $\mathcal{I}^* = Count$.

We begin with some basic facts about operation $^*$.

\begin{fact}
    For any $\mathcal{F,G} \subseteq \mathcal{P}(X)$ we have:
    \begin{itemize}
    \item  $\mathcal{G}\subseteq\mathcal{F}^{*} \Longrightarrow \mathcal{F}\subseteq\mathcal{G}^{*} $,
    \item $\mathcal{F}\subseteq\mathcal{F}^{**} $,
    \item $\mathcal{G}\subseteq\mathcal{F} \Longrightarrow \mathcal{F}^*\subseteq\mathcal{G}^{*} $,
    \item $\mathcal{F}^*=\mathcal{F}^{***}$,
    \item $\mathcal{F}^{*}$ is closed under taking subsets and translation invariant,
    \item $\mathcal{F}=\mathcal{F}^{**}\Longleftrightarrow \exists \mathcal{A}\; \mathcal{F}=\mathcal{A}^{*}.$ 
\end{itemize}
\end{fact}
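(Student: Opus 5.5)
The plan is to derive all six items directly from the definition $\mathcal{F}^*=\{A\subseteq X:\ \forall F\in\mathcal{F}\;\; A+F\neq X\}$. The one structural fact used throughout is that the relation ``$A+F\neq X$'' is symmetric in $A$ and $F$, because the group is abelian and so $A+F=F+A$. Given this, $A\in\mathcal{F}^*$ holds exactly when $A$ stands in this relation to every member of $\mathcal{F}$. Hence $^*$ is the polarity (Galois connection) induced by a symmetric relation on $\mathcal{P}(X)$, and most items are the standard consequences of that.

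\textbf{Items 1--3 (symmetry, extensivity, antitonicity).} For the first item, suppose $\mathcal{G}\subseteq\mathcal{F}^*$, and take $F\in\mathcal{F}$ and $G\in\mathcal{G}$. Then $G+F\neq X$, so $F+G\neq X$, which gives $F\in\mathcal{G}^*$. The second item follows by applying the first with $\mathcal{G}=\mathcal{F}^*$, since $\mathcal{F}^*\subseteq\mathcal{F}^*$ trivially. The third item is immediate: if $\mathcal{G}\subseteq\mathcal{F}$, then quantifying over all $F\in\mathcal{F}$ in particular covers all $G\in\mathcal{G}$.

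\textbf{Item 4 ($\mathcal{F}^*=\mathcal{F}^{***}$).} Apply the second item to $\mathcal{F}^*$ to get $\mathcal{F}^*\subseteq(\mathcal{F}^*)^{**}$. For the reverse inclusion, apply the third item to $\mathcal{F}\subseteq\mathcal{F}^{**}$, which yields $\mathcal{F}^{***}\subseteq\mathcal{F}^*$.

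\textbf{Item 5 (hereditary and translation invariant).} Let $B\subseteq A\in\mathcal{F}^*$. Then $B+F\subseteq A+F\neq X$, and a subset of a proper subset is proper, so $B+F\neq X$. For translations, $(A+x)+F=(A+F)+x$. Since translation by $x$ is a bijection of $X$, we have $(A+F)+x=X$ if and only if $A+F=X$. Hence $A+x\in\mathcal{F}^*$.

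\textbf{Item 6 (closed families are exactly the $^*$-images).} If $\mathcal{F}=\mathcal{F}^{**}$, take $\mathcal{A}=\mathcal{F}^*$. Conversely, if $\mathcal{F}=\mathcal{A}^*$, then item 4 gives $\mathcal{F}^{**}=\mathcal{A}^{***}=\mathcal{A}^*=\mathcal{F}$.

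None of the steps is a real obstacle. The only point needing care is the first item, where commutativity of $+$ must be invoked explicitly so that the relation is symmetric. Every other item reduces to the first three by formal manipulation.
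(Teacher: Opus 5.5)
Your proposal is correct. The paper states this Fact without proof, and your argument is the routine verification it leaves implicit: symmetry of the relation $A+F\neq X$, then the standard Galois-connection consequences for items 2--6. You are right to invoke commutativity explicitly, since the paper's standing assumption that $(X,+)$ is abelian is genuinely needed: for a nonabelian group with $A\cdot F$ in place of $A+F$, items 1 and 2 already fail in $S_3$.
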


Now, we will demonstrate some relation between the ideals defined in Chapter $2$  with respect to  operation $^*$. 

\begin{proposition}
    $\mathcal{ED}^*=\mathcal{IE}$ and $\mathcal{IE}^*=\mathcal{ED}$.

\end{proposition}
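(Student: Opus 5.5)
The plan is to compute $A+F$ for the generators $A=I_g$ and $F=E_f$ exactly, and then pass to the $\sigma$-ideals by taking countable unions on both sides. The key lemma I would prove first is
\[
z\notin I_g+E_f \iff z=^*g+f .
\]
For ($\Leftarrow$), suppose $z=^*g+f$, $x\in I_g$ and $y=z-x$. Then at the infinitely many $n$ with $x(n)=g(n)$ and $z(n)=g(n)+f(n)$ we get $y(n)=f(n)$, so $y\notin E_f$. For ($\Rightarrow$), suppose $z(n)\neq g(n)+f(n)$ for all $n$ in an infinite set $T$. Define $x(n)=g(n)$ for $n\in T$ and $x(n)=z(n)-f(n)-1$ otherwise. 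Then $x\in I_g$ and $z-x$ differs from $f$ at every coordinate, so $z=x+(z-x)\in I_g+E_f$.

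The same computation with the roles exchanged describes $E_h+I_g$. From the lemma the generator-level dualities follow. A set $A$ with $A+E_f\neq\mathbb{Z}^\omega$ for all $f$ must, for every $f$, miss some translate pattern $z_f=^*g_f+f$. I would then show $A\subseteq I_{z_f-f}$. Indeed, if $a\in A$ is eventually different from $z_f-f$, pick $y$ with $y(n)=f(n)$ only where $a(n)=z_f(n)-f(n)$, and otherwise $y=z_f-a$. This forces $a+y=z_f$ while keeping $y\in E_f$, and contradicts $z_f\notin A+E_f$. So $A\subseteq I_{g_f}$, and symmetrically for $\mathcal{IE}^*$ with $E_h$.

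The step I expect to be the main obstacle is the passage from generators to the $\sigma$-ideals $\mathcal{ED}$ and $\mathcal{IE}$. Every element of $\mathcal{ED}$ lies in a countable union $\bigcup_j E_{f_j}$, and by the lemma
\[
\mathbb{Z}^\omega\setminus\Bigl(I_g+\bigcup_j E_{f_j}\Bigr)=\bigcap_j\{z: z=^*g+f_j\}.
\]
So $I_g\in\mathcal{ED}^*$ requires all the $f_j$ to be mutually $=^*$. I would first try to show that every set in $\mathcal{ED}$ is covered by a single $E_f$ modulo such a coherence. The lemma, however, already shows this fails. For $f_1=0$ and $f_2=1$ we get $I_g+(E_0\cup E_1)=\mathbb{Z}^\omega$, so $I_g\notin\mathcal{ED}^*$ although $I_g\in\mathcal{IE}$.

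Hence, with $^*$ applied to the $\sigma$-ideals as literally defined, the equality $\mathcal{ED}^*=\mathcal{IE}$ cannot be proved by this route. The argument only establishes the dualities between the generating families $\{E_f\}$ and $\{I_g\}$, together with the inclusion $\mathcal{ED}^*\subseteq\{A:\exists g\ A\subseteq I_g\}\subseteq\mathcal{IE}$. The symmetric computation gives $\mathcal{IE}^*\subseteq\{A:\exists h\ A\subseteq E_h\}\subseteq\mathcal{ED}$. I would present these as the provable content and flag the reverse inclusions as false, with the example above as witness.
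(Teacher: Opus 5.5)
Your conclusion is right: as literally stated, with $\mathcal{ED}$ and $\mathcal{IE}$ the $\sigma$-ideals, the proposition is false, and your counterexample is valid.

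The half you do prove, $\mathcal{ED}^*\subseteq\mathcal{IE}$ and $\mathcal{IE}^*\subseteq\mathcal{ED}$, follows the paper's argument. From $z\notin A+E_f$ you get $A\cap(z-E_f)=\emptyset$, and $z-E_f=E_{z-f}$, hence $A\subseteq I_{z-f}$. Your auxiliary $y$ is just $z_f-a$, and the remark about ``translate patterns'' is not needed there.

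For the reverse inclusions the paper says ``it will be enough to check it for generators'' and only verifies $I_g+E_f\neq\mathbb{Z}^\omega$, with witness $f+g$. That is exactly the step that fails. Membership in $\mathcal{ED}^*$ must be tested against every member of $\mathcal{ED}$, and $A+(F_1\cup F_2)=(A+F_1)\cup(A+F_2)$, so two non-covering sums can jointly cover. Your lemma, that the complement of $I_g+E_f$ is $\{z: z=^*g+f\}$, is correct, and it gives $I_g+(E_0\cup E_1)=\mathbb{Z}^\omega$. Directly: if $z\neq g$ infinitely often then $z\in I_g+E_0$, and if $z=^*g$ then $z=z+0$ with $z\in I_g$ and $0\in E_1$. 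So $I_g\in\mathcal{IE}\setminus\mathcal{ED}^*$.

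Three remarks.
\begin{itemize}
\item ``The example above'' witnesses only $\mathcal{IE}\not\subseteq\mathcal{ED}^*$. For $\mathcal{ED}\not\subseteq\mathcal{IE}^*$ you should state the mirror example $E_f+(I_0\cup I_1)=\mathbb{Z}^\omega$. Your lemma gives it, since $\{z:z=^*f\}\cap\{z:z=^*f+1\}=\emptyset$.
\item It is worth saying why the generator reduction is legitimate elsewhere in the paper but not here. For $\mathcal{H}$, as the paper notes, and likewise for $\mathcal{M}_-$, every countable union of generators lies inside a single generator. By contrast, $E_0\cup E_1$ lies in no $E_f$, and $I_0\cup I_1$ lies in no $I_g$: take $x(n)\in\{0,1\}\setminus\{g(n)\}$.
\item What survives is what you state: the duality between the hereditary families $\{A:\exists f\ A\subseteq E_f\}$ and $\{A:\exists g\ A\subseteq I_g\}$, plus the two inclusions above. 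In the corollary that follows, $\mathcal{SMZ}^+\subseteq\mathcal{IE}$ still goes through, since it uses only $\mathcal{ED}^*\subseteq\mathcal{IE}$ and $\mathcal{ED}\subseteq\mathcal{M}_-$. The claims $\mathcal{ED}=\mathcal{ED}^{**}$ and $\mathcal{IE}=\mathcal{IE}^{**}$ lose their justification.
\end{itemize}
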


\begin{proof}
It will be enough to check it  for generators.

    First we will how that     $\mathcal{ED}^*\subseteq\mathcal{IE}$. Take any $X\in \mathcal{ED}^*$. For every $E_f \in \mathcal{ED}$ there exists $z\notin \bigcup_{x\in X} E_f+x$. So for every $x\in X$ we have $x\notin -E_f+z=E_g$ for some $g\in \mathbb {Z}^\omega$. So $x\in\{y:\exists^\infty n\; y(n)=g(n)\}$ and so $X\subseteq I_g$.
        
Next we will show $\mathcal{IE}\subseteq\mathcal{ED}^*$. Take any $I_g\in\mathcal{IE}$, and any $E_f\in \mathcal{ED}.$ We have to find $z\notin E_f+I_g$. Let $z=f+g.$

Next we will show $\mathcal{IE}^*\subseteq\mathcal{ED}$. Take any $X\in \mathcal{IE}^*$. For every $I_f \in \mathcal{IE}$ there exists $z\notin \bigcup_{x\in X} I_f+x$. So for every $x\in X$ we have $x\notin -I_f+z=I_g$ for some $g\in \mathbb {Z}^\omega$. So $x\in\{y:\forall^\infty n \;\; g(n)\neq y(n)\}$ and so $X\subseteq E_g$.

Next we will show $\mathcal{ED}\subseteq\mathcal{IE}^*$. Take any $E_f\in \mathcal{ED}$ and any $I_g\in\mathcal{IE}$. We have to find $z\notin E_f+I_g$. Let $z=f+g.$ 
\end{proof}

\begin{cor}\label{xddddddd}
    $\mathcal{ED}=\mathcal{ED^{**}}$,    $\mathcal{IE}=\mathcal{IE}^{**}$, 
    $\mathcal{SMZ}^+\subseteq\mathcal{IE}$.
\end{cor}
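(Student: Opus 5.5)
The first two equalities follow formally from the preceding proposition together with the Fact on the operation $^*$. Since $\mathcal{ED}^*=\mathcal{IE}$ and $\mathcal{IE}^*=\mathcal{ED}$, applying $^*$ to the second equality gives
\[
\mathcal{ED}^{**}=(\mathcal{ED}^*)^*=\mathcal{IE}^*=\mathcal{ED}.
\]
Symmetrically,
\[
\mathcal{IE}^{**}=(\mathcal{IE}^*)^*=\mathcal{ED}^*=\mathcal{IE}.
\]
Alternatively, each of $\mathcal{ED}$ and $\mathcal{IE}$ is of the form $\mathcal{A}^*$, so the last item of the Fact yields $\mathcal{F}=\mathcal{F}^{**}$ directly.

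For the inclusion $\mathcal{SMZ}^+\subseteq\mathcal{IE}$, I would use the antitonicity of $^*$ (third item of the Fact). By the Corollary following the Galvin--Mycielski--Solovay type theorem for the Baire space, $\mathcal{SMZ}^+=\mathcal{M}_-^*$. By the Proposition $\mathcal{ED}\subseteq\mathcal{M}_-$, so applying $^*$ reverses the inclusion:
\[
\mathcal{SMZ}^+=\mathcal{M}_-^*\subseteq\mathcal{ED}^*=\mathcal{IE}.
\]

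There is no real obstacle here; the only point to check is the direction of the inclusion when applying the third item of the Fact, namely $\mathcal{G}\subseteq\mathcal{F}\Rightarrow\mathcal{F}^*\subseteq\mathcal{G}^*$ with $\mathcal{G}=\mathcal{ED}$ and $\mathcal{F}=\mathcal{M}_-$.
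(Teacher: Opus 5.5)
Your derivation is the route the paper intends: the corollary is placed directly after the Proposition $\mathcal{ED}^*=\mathcal{IE}$, $\mathcal{IE}^*=\mathcal{ED}$ and is read off from it together with the Fact, exactly as you do.

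\textbf{The third item is fine.} It only needs the inclusion $\mathcal{ED}^*\subseteq\mathcal{IE}$, and that half of the Proposition is correct. Testing $X\in\mathcal{ED}^*$ against the single set $E_0$ already gives $X\subseteq I_z$ for any $z\notin X+E_0$. Even more directly, applying the definition of $\mathcal{SMZ}^+$ to the partition into singletons gives $X\subseteq I_z$.

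\textbf{The first two items rest on false inclusions.} They need the other halves, $\mathcal{IE}\subseteq\mathcal{ED}^*$ and $\mathcal{ED}\subseteq\mathcal{IE}^*$, and these fail for the $\sigma$-ideals as defined. The paper checks them only on single generators. But $\mathcal{ED}^*$ and $\mathcal{IE}^*$ must be tested against \emph{all} members of the ideal. Unlike $\mathcal{M}_-$ or $\mathcal{H}$, a union of two generators need not lie inside one generator. For instance, $E_f\subseteq E_h$ holds iff $f$ and $h$ agree from some point on, so $E_0\cup E_1$ lies in no single $E_h$.

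\textbf{A counterexample.} Write $0,1$ for the constant functions. Then $I_0+(E_0\cup E_1)=\mathbb{Z}^\omega$. Given $z$, if $z(n)=0$ for almost all $n$, then $z\in E_1$ and $z=0+z$ with $0\in I_0$. Otherwise put $x(n)=0$ where $z(n)\neq 0$ and $x(n)=-1$ where $z(n)=0$; then $x\in I_0$ and $z-x$ never vanishes, i.e.\ $z-x\in E_0$. Hence $I_0\in\mathcal{IE}\setminus\mathcal{ED}^*$ and $E_0\cup E_1\in\mathcal{ED}\setminus\mathcal{IE}^*$, so in fact $\mathcal{ED}^*\subsetneq\mathcal{IE}$ and $\mathcal{IE}^*\subsetneq\mathcal{ED}$. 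The generator check really proves these dualities only for the families of sets contained in a single $E_f$, resp.\ a single $I_g$, and those families are not ideals.

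\textbf{Consequence for your chains.} With only the correct inclusions, antitonicity yields $\mathcal{IE}^*\subseteq\mathcal{ED}^{**}$ and $\mathcal{ED}^*\subseteq\mathcal{IE}^{**}$. These are already implied by the trivial $\mathcal{ED}\subseteq\mathcal{ED}^{**}$ and $\mathcal{IE}\subseteq\mathcal{IE}^{**}$. So both equalities in $\mathcal{ED}^{**}=\mathcal{IE}^*=\mathcal{ED}$ and in $\mathcal{IE}^{**}=\mathcal{ED}^*=\mathcal{IE}$ are unjustified. Your alternative remark that $\mathcal{ED}$ and $\mathcal{IE}$ are of the form $\mathcal{A}^*$ fails for the same reason.

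\textbf{What would be needed.} For $\mathcal{J}^{**}\subseteq\mathcal{J}$ you need: for every $A\notin\mathcal{J}$ there is $X\in\mathcal{J}^*$ with $X+A=\mathbb{Z}^\omega$. Theorem~\ref{bbbbb} supplies exactly this when its hypotheses hold (e.g.\ under CH, where $add(\mathcal{J})=\mathfrak{c}$). Nothing in your argument or in the Proposition gives it in ZFC.
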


\begin{thm}
   $X\in \mathcal{G}\Longleftrightarrow \forall H\in\mathcal{H} \;\; H +X\neq \mathbb{Z}^\omega$.
    
\end{thm}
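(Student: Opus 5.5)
The plan is to mirror the proof of the Baire-space Galvin--Mycielski--Solovay theorem for $\mathcal{SMZ}^+$ and $\mathcal{M}_-$ given above. We prove both directions for the generators $H_{\bar I,f}$. This suffices because every $H\in\mathcal{H}$ is contained in a single $H_{\bar J,g}$, by the remark after the definition of $\mathcal{H}$. The one computation used throughout is translation of generators: $H_{\bar I,f}+x=H_{\bar I,f+x}$. Indeed, $y\in H_{\bar I,f}+x$ iff $y-x\in H_{\bar I,f}$ iff for almost all $n$ there is $m\in I_n$ with $y(m)=f(m)+x(m)$.

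$(\Longleftarrow)$ Assume $X+H\neq\mathbb{Z}^\omega$ for all $H\in\mathcal{H}$, and fix $\bar I\in\mathbb{IP}$. We apply the hypothesis to $H=H_{\bar I,0}$ and pick $z\notin X+H_{\bar I,0}=\bigcup_{x\in X}H_{\bar I,x}$. Then for every $x\in X$ we have $z\notin H_{\bar I,x}$. Unfolding the definition, there are infinitely many $n$ such that $z(m)\neq x(m)$ for every $m\in I_n$. This is exactly the defining condition of $\mathcal{G}$ with witness $z$ for $\bar I$, so $X\in\mathcal{G}$.

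$(\Longrightarrow)$ Let $X\in\mathcal{G}$ and $H\subseteq H_{\bar I,f}$. Applying the definition of $\mathcal{G}$ to $\bar I$ gives $z$ such that for every $x\in X$ there are infinitely many $n$ with $x(m)\neq z(m)$ for all $m\in I_n$. We claim $a=f+z\notin X+H$. If $a\in X+H$, then $a=x+h$ with $x\in X$ and $h\in H_{\bar I,f}$, so $a\in H_{\bar I,f+x}$. Hence for almost all $n$ there is $m\in I_n$ with $f(m)+z(m)=f(m)+x(m)$, that is, $z(m)=x(m)$. This contradicts the choice of $z$, which gives infinitely many $n$ where no such $m$ exists.

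I expect no real obstacle; both directions are a direct unfolding of the definitions. The only point needing care is the reduction to a single generator, since an arbitrary $H\in\mathcal{H}$ is a countable union of generators. The remark after the definition of $\mathcal{H}$ handles this: $H\subseteq H_{\bar J,g}$ for one pair $(\bar J,g)$, and if $X+H_{\bar J,g}\neq\mathbb{Z}^\omega$ then also $X+H\neq\mathbb{Z}^\omega$. As a consequence, once the theorem is proved we obtain $\mathcal{H}^*=\mathcal{G}$, matching the corollary $\mathcal{SMZ}^+=\mathcal{M}_-^*$.
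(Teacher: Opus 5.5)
Your proof is correct and matches the paper's. The forward direction is the same: the point $f+z$ lies outside $X+H_{\bar I,f}$. Your backward direction is just the direct form of the paper's contrapositive argument. The paper writes $a=x_f+(a-x_f)$ with $a-x_f\in H_{\bar I,g}$, which is your identity $X+H_{\bar I,g}=\bigcup_{x\in X}H_{\bar I,g+x}$ written out pointwise. You also make explicit the reduction from an arbitrary $H\in\mathcal{H}$ to a single generator, via the remark after the definition of $\mathcal{H}$; the paper uses this only tacitly.
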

\begin{proof}
$\Longrightarrow$ Take any $X \in \mathcal{G}$ and $H_{\bar{I},f}$. We want to find $z \notin X + H_{\bar{I},f}$. 
Since $X \in \mathcal{G}$ we can find $g$ such that
$\exists^\infty n\; \forall m \in I_n \ x(m) \neq g(m)$. Let $z(m) = f(m) + g(m)$.

\noindent
$\Longleftarrow$ Assume $X \notin \mathcal{G}$. We will show that there exists $\bar{I}\in \mathbb{IP}$ and $g\in\mathbb{Z}^\omega$ such that $X + H_{\bar{I},g} = \mathbb{Z}^\omega$.

Since $X \notin E$, the negation of the definition holds:
\[
\exists \bar{I} \in \mathbb{IP} \;\; \forall f \in \mathbb{Z}^\omega \;\; \exists x \in X \;\; \forall^\infty  n \;\; \exists m \in I_n \;\; x(m) = f(m).
\]
Take this partition $\bar{I}$. Take any $g \in \mathbb{Z}^\omega$. 

Take any $a \in \mathbb{Z}^\omega$. We want to find $x \in X$ and $y \in H_{\bar{I},g}$ such that $a = x + y$.

We know that $a=f+g$ for some $f$. Take $x_f\in X$ such that
$\forall^\infty n \;\;\exists m\in I_n \;\; x(m)=f(m).$ Let $x=x_f$.

Now we will construct $y\in H_{\bar{I},g}$ such that $a=x_f+y.$ 

\begin{enumerate}

    \item If $f(n)=x_f(n)$ let $y(n)=g(n)$,
    \item else let $y(n)=a(n)-x_f(n).$
    
\end{enumerate}

    Of course $y\in H_{\bar{I},g}$, because $y(n)=g(n)$ if and only if $x_f(n)=f(n)$.
\end{proof}

\begin{cor}
    $\mathcal{H}^*=\mathcal{G}$,   $\mathcal{SMZ}^+\subseteq\mathcal{G}$.
\end{cor}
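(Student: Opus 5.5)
The first part, $\mathcal{H}^*=\mathcal{G}$, follows from the preceding theorem once we recall the definition of $^*$. By definition, $X\in\mathcal{H}^*$ means that $X+H\neq\mathbb{Z}^\omega$ for every $H\in\mathcal{H}$. The theorem just proved says this holds exactly when $X\in\mathcal{G}$. One point needs care: the theorem is formulated for arbitrary $H\in\mathcal{H}$, while its proof only handles the generators $H_{\bar I,f}$. This is enough for two reasons.

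\begin{itemize}
\item By the remark after the definition of $\mathcal{H}$, every countable union of generators is contained in a single generator $H_{\bar J,g}$. Hence every $H\in\mathcal{H}$ lies inside some $H_{\bar J,g}$.
\item The condition $X+H\neq\mathbb{Z}^\omega$ passes to subsets of $H$, since $H'\subseteq H$ gives $X+H'\subseteq X+H$.
\end{itemize}

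So testing against the generators alone is equivalent to testing against all of $\mathcal{H}$.

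For the second part, $\mathcal{SMZ}^+\subseteq\mathcal{G}$, I would use the antitonicity of $^*$ from the basic Fact together with the earlier corollary $\mathcal{SMZ}^+=\mathcal{M}_-^*$. We have $\mathcal{H}\subseteq\mathcal{M}_-$, proved earlier via $H_{\bar I,f}\subseteq M(f+1,\bar I)$. The Fact then gives $\mathcal{M}_-^*\subseteq\mathcal{H}^*$, and therefore
\[
\mathcal{SMZ}^+=\mathcal{M}_-^*\subseteq\mathcal{H}^*=\mathcal{G}.
\]

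As a sanity check, there is also a direct argument that avoids $^*$. Fix an interval partition $\bar I$ and suppose $z$ witnesses $X\in\mathcal{SMZ}^+$ for $\bar I$. Put $z'=z+1$. Whenever $x\upharpoonright I_n=z\upharpoonright I_n$, we get $x(m)\neq z'(m)$ for every $m\in I_n$. This happens for infinitely many $n$, so $z'$ witnesses $X\in\mathcal{G}$ for $\bar I$.

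I expect no real obstacle here. The only step needing attention is the reduction from arbitrary members of $\mathcal{H}$ to generators, and the closure remark after the definition of $\mathcal{H}$ handles it.
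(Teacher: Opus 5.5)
Your proposal is correct and follows the paper's route. $\mathcal{H}^*=\mathcal{G}$ is read off directly from the preceding theorem, and $\mathcal{SMZ}^+\subseteq\mathcal{G}$ comes from $\mathcal{H}\subseteq\mathcal{M}_-$, antitonicity of $^*$, and $\mathcal{SMZ}^+=\mathcal{M}_-^*$. You also correctly supply the step the paper's proof leaves implicit, namely that every $H\in\mathcal{H}$ lies inside a single generator $H_{\bar J,g}$, and your direct witness $z+1$ is a valid independent check.
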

The diagram below presents the relationships between the ideals, including $\mathcal{SMZ}$ and $\mathcal{SMZ}^+.$ 
\begin{center}
\begin{tikzpicture}[node distance=1.5cm, auto]
    \node (M) at (0, 4.5) {$\mathcal{M}$};
    \node (MM) at (0, 3) {$\mathcal{UM}$};
    \node (Mminus) at (0, 1.5) {$\mathcal{M}_-$};
    \node (IE) at (-2, 1.5) {$\mathcal{IE}$};
    \node (G) at (2, 1.5) {$\mathcal{G}$};
    \node (H) at (-2, 0) {$\mathcal{H}$};
    \node (ED) at (2, 0) {$\mathcal{ED}$};
    \node (SMZ) at (0,0)
    {$\mathcal{SMZ}^+$};
        \node (SMZZ) at (0,-1.5)
    {$\mathcal{SMZ}$};

    \draw[->] (H) -- (IE);
    \draw[->] (H) -- (Mminus);
    \draw[->] (ED) -- (Mminus);
    \draw[->] (ED) -- (G);
    \draw[->] (Mminus) -- (MM);
    \draw[->] (MM) -- (M);
    \draw[->]  (SMZ)-- (IE);
    \draw[->]   (SMZ) -- (G);
    \draw[->]   (SMZZ) -- (SMZ);
\end{tikzpicture}
\end{center}
We will need Luzin set to show that $\mathcal{G}\nsubseteq\mathcal{M}_-$ and $\mathcal{IE}\nsubseteq \mathcal{M}_-$.

\begin{df}
A subset $L$ of $\omega^\omega$ is called a  Luzin set if $L$ is uncountable, but for every nowhere dense subset $K$ of $\omega^\omega$ the intersection $K \cap L$ is countable.
\end{df}

It is known that Luzin set has strong measure zero, but is not meager.

\begin{cor}
    Assuming the existence of a Luzin set, we have $\mathcal{G}\nsubseteq\mathcal{M}_-$ and $\mathcal{IE}\nsubseteq \mathcal{M}_-$.
\end{cor}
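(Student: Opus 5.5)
We want: a Luzin set $L$ (transported to $\mathbb{Z}^\omega$ via a homeomorphism $\omega^\omega\cong\mathbb{Z}^\omega$, or constructed directly in $\mathbb{Z}^\omega$ by the usual transfinite recursion) lies in $\mathcal{G}$ and in $\mathcal{IE}$, but not in $\mathcal{M}_-$. The plan is to use the chain of inclusions already established: $\mathcal{SMZ}\subseteq\mathcal{SMZ}^+\subseteq\mathcal{G}$ (Corollary after the theorem $\mathcal{H}^*=\mathcal{G}$) and $\mathcal{SMZ}^+\subseteq\mathcal{IE}$ (\Cref{xddddddd}). So it suffices to show $L\in\mathcal{SMZ}$ and $L\notin\mathcal{M}_-$.

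For $L\notin\mathcal{M}_-$: since $\mathcal{M}_-\subseteq\mathcal{UM}\subseteq\mathcal{M}$, it is enough that $L$ is not meager. If $L\subseteq\bigcup_k K_k$ with $K_k$ nowhere dense, then each $L\cap K_k$ is countable, so $L$ would be countable, a contradiction. Alternatively and more directly, each generator $M(x,\bar I)$ is a countable union of the closed nowhere dense sets $\{y:\forall n\ge m\; y\upharpoonright I_n\neq x\upharpoonright I_n\}$, so every member of $\mathcal{M}_-$ meets $L$ in a countable set.

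For $L\in\mathcal{SMZ}$, I would use the definition in $\mathbb{Z}^\omega$. Fix $(k_n)$. Enumerate a countable dense set $\{d_n\}$ and take the open dense set $U=\bigcup_n [d_{2n}\upharpoonright k_{2n}]$. Its complement is closed nowhere dense, so $L\setminus U$ is countable, say $\{e_n\}$. Cover $e_n$ by $[e_n\upharpoonright k_{2n+1}]$. The sequence $\sigma_{2n}=d_{2n}\upharpoonright k_{2n}$, $\sigma_{2n+1}=e_n\upharpoonright k_{2n+1}$ has $|\sigma_n|=k_n$ and covers $L$. One needs the dense set indexed along even indices to still be dense, which is fine by choosing the $d_{2n}$ to enumerate all of $\mathbb{Z}^{<\omega}$ extended arbitrarily.

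The only delicate point is checking that the inclusions $\mathcal{SMZ}\subseteq\mathcal{SMZ}^+$ and $\mathcal{SMZ}^+\subseteq\mathcal{G},\mathcal{IE}$ hold as stated in the Baire space. The first is the remark "Of course $\mathcal{SMZ}\subseteq\mathcal{SMZ}^+$": given $\bar I$, apply the characterization with $k_n=\max I_n+1$ and glue the $\sigma_n$ along a thinned subsequence. The other two are the corollaries above. This gives $L\in\mathcal{G}\setminus\mathcal{M}_-$ and $L\in\mathcal{IE}\setminus\mathcal{M}_-$.
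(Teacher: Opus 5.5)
Your proposal is correct and follows the paper's own route: a Luzin set is strong measure zero, hence lies in $\mathcal{SMZ}\subseteq\mathcal{SMZ}^+\subseteq\mathcal{G}\cap\mathcal{IE}$, and it is non-meager, hence not in $\mathcal{M}_-\subseteq\mathcal{M}$; you also prove the two facts the paper only quotes as known. One small remark: for $\mathcal{SMZ}\subseteq\mathcal{SMZ}^+$ no thinning is needed (take $k_n=\max I_n+1$ and set $z\upharpoonright I_n=\sigma_n\upharpoonright I_n$ for every $n$), and discarding indices would actually risk losing the $\exists^\infty$ property.
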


\begin{thm}
    There exists $M\in\mathcal{M}_-$ and $K\in K_\sigma $ such that $M+K=\mathbb{Z}^\omega.$
\end{thm}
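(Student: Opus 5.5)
Take $M=M(0,\bar I)$ with $\bar I$ the partition of $\omega$ into singletons $I_n=\{n\}$. Then
$$M=\{x\in\mathbb{Z}^\omega:\forall^\infty n\ x(n)\neq 0\},$$
and this set belongs to $\mathcal{M}_-$ by definition (it is also $E_0\in\mathcal{ED}\subseteq\mathcal{M}_-$). For the compact part take
$$K=\{0,1\}^\omega\subseteq\mathbb{Z}^\omega,$$
which is compact, so $K\in K_\sigma$. The task is then to show $M+K=\mathbb{Z}^\omega$, i.e.\ for every $a\in\mathbb{Z}^\omega$ to find $k\in K$ with $a-k\in M$.

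Given $a$, define $k$ coordinatewise by $k(n)=1$ if $a(n)=0$ and $k(n)=0$ otherwise, and set $x=a-k$. If $a(n)=0$ then $x(n)=-1\neq 0$; if $a(n)\neq 0$ then $x(n)=a(n)\neq 0$. So $x(n)\neq0$ for every $n$, hence $x\in M$, and $a=x+k\in M+K$.

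The point of the construction is that $\mathcal{M}_-$ is not closed under the kind of "bounded perturbation" supplied by a compact set. In the Cantor space this phenomenon cannot occur, since there $M+K\ne 2^\omega$ for meager $M$ and countable $K$, and compact sets are small only in a different sense. The only step needing care is checking that the singleton partition is a legitimate element of $\mathbb{IP}$; it is, since $\max I_n=n=\min I_{n+1}-1$. Verifying that $M\in\mathcal{M}_-$ and $K\in K_\sigma$ is then immediate, and the covering computation above is entirely elementary, so I expect no real obstacle.
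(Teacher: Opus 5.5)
Your proof is correct and is essentially the paper's argument: the same $M=M(0,\bar I)$ with singleton intervals and the same trick of shifting the zero coordinates of $a$ by one. The only differences are that you use the single compact set $\{0,1\}^\omega$ and need no case split, whereas the paper uses the $K_\sigma$ set $\{x:\forall^\infty n\ |x(n)|<2\}$ and separately treats the case $\forall^\infty n\ a(n)\neq 0$, which is unnecessary. (Your closing aside about the Cantor space is wrong, however: there $2^\omega$ is itself compact, so the analogous statement holds trivially rather than failing.)
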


\begin{proof}
Let $M=M(0,\bar{I})$, where $\bar{I}$ is a partition into singletons. Let $K=\{x\in \mathbb{Z}^\omega:\forall^\infty_n \;\;|x(n)|<2\}$. Take any $z\in \mathbb{Z}^\omega.$ We have to find $x\in M$ and $y\in K$ such that $x+y=z.$

\begin{enumerate}
    \item If $\forall^\infty_n z(n)\neq0$, then $z\in M$, so $z=z+0$.

    \item If $\exists^\infty_n \ z(n)=0$ take $x,y$ as follows:
    \begin{itemize}
        \item if $z(n)\neq0$ let $x(n)=z(n)$ and $y(n)=0.$
        \item if $z(n)=0$ let $x(n)=1$ and $y(n)=-1.$
    \end{itemize}
\end{enumerate}
\end{proof}

\begin{thm}
    $K_\sigma^* $ is not an ideal.
\end{thm}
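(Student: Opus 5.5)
We will exhibit two sets in $K_\sigma^*$ whose union is not in $K_\sigma^*$. The union must be able to produce all of $\mathbb{Z}^\omega$ when added to some $\sigma$-compact set, while each piece cannot.

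The preceding theorem suggests a template. There the meager set $M=M(0,\bar{I})$, with $\bar{I}$ the partition into singletons, together with $K=\{x:\forall^\infty n\ |x(n)|<2\}\in K_\sigma$, satisfied $M+K=\mathbb{Z}^\omega$. We split the work done by $M$ into two pieces. Consider
$$A=\{x\in\mathbb{Z}^\omega:\forall^\infty n\;\; x(n)\neq 0\}$$
and split it according to parity of large coordinates:
$$A_0=\{x:\forall^\infty n\ x(n)\neq 0\ \wedge\ \forall^\infty n\ x(n)\ \text{odd}\},\qquad A_1=\{x:\forall^\infty n\ x(n)\ \text{even},\ x(n)\neq 0\}.$$
This may not be the right split, so we use a more robust candidate. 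Let
$$X_0=\{x:\forall^\infty n\ x(n)\ge 0\},\qquad X_1=\{x:\forall^\infty n\ x(n)\le 0\}.$$

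\textbf{Step 1: each $X_i$ lies in $K_\sigma^*$.} Take a $\sigma$-compact set $C=\bigcup_k C_k$. Each $C_k$ is bounded by some $b_k$, meaning $|c(n)|\le b_k(n)$ for all $c\in C_k$. Let $b$ eventually dominate every $b_k$. Put $z(n)=-b(n)-1$. If $z=x+c$ with $x\in X_0$ and $c\in C_k$, then for large $n$ we get $z(n)=x(n)+c(n)\ge -b_k(n)>-b(n)-1$, which is a contradiction. So $z\notin X_0+C$. The symmetric argument with $z(n)=b(n)+1$ handles $X_1$.

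\textbf{Step 2: the union fails.} We need a single $K\in K_\sigma$ with $(X_0\cup X_1)+K=\mathbb{Z}^\omega$. This is false for a fixed bounded $K$: take $z$ alternating $\pm$ large values. So the sign split is wrong. The union has to absorb any $z$ while each piece cannot.

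We return to the parity idea and take
$$Y_0=\{x:\forall^\infty n\ x(n)\ \text{even}\},\qquad Y_1=\{x:\forall^\infty n\ x(n)\ \text{odd}\},\qquad K=\{x:\forall n\ x(n)\in\{0,1\}\}.$$
Then $Y_0+K=\mathbb{Z}^\omega$ already, so $Y_0\notin K_\sigma^*$. Both pieces would therefore need to be smaller, which this choice does not achieve.

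The final plan adapts the preceding proof directly. Let
$$P=\{x:\forall^\infty n\ x(n)>0\},\qquad Q=\{x:\forall^\infty n\ x(n)<0\},$$
or more flexibly, pieces defined by the sign of $x(n)$ on a fixed infinite coinfinite set.

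\begin{itemize}
\item To show each piece lies in $K_\sigma^*$, we use the domination argument of Step 1: each piece is eventually one-sided, and a $\sigma$-compact set is eventually bounded by one function.
\item To show the union does not, we choose $K=\{x:\forall n\ |x(n)|\le 1\}$ and pieces of the form $\{x:\forall^\infty n\ x(n)\neq 0\ \text{and sign pattern}\ \ldots\}$. The difficulty is that an arbitrary $z$ can oscillate in sign.
\end{itemize}

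\textbf{Main obstacle.} Finding pieces that are one-sided enough for the domination argument, yet whose union plus a compact set covers all of $\mathbb{Z}^\omega$. We expect this requires letting the pieces depend on $z$'s sign pattern, which fails because the pieces are fixed in advance. A possible fix is to pass to a countable partition, which would show $K_\sigma^*$ is not a $\sigma$-ideal. We would first try this: take $X_s=\{x:\forall^\infty n\ \mathrm{sign}\,x(n)=s(n)\}$ and test whether countably many such sets can cover all sign patterns modulo finite. They cannot, since there are continuum many patterns, so this route needs a genuinely new idea.
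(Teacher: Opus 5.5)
Your proposal does not reach a proof. Each split you actually test fails in one of two ways. For $X_0,X_1$ and $P,Q$ the union is still in $K_\sigma^*$: given $K\in K_\sigma$ with eventual bound $b$, the point $z(n)=(-1)^n(b(n)+1)$ escapes the sum, for \emph{every} $K\in K_\sigma$ and not only bounded ones. For the parity split, a piece lies outside $K_\sigma^*$. You then close by saying a new idea is needed.

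The missing idea is a change of quantifier. Your Step~1 domination argument is correct, but it proves more than you use. For $z=-b-1$ and $c\in C_k$, the point $x=z-c$ has $x(n)<0$ for \emph{all but finitely many} $n$. Hence $z\notin Y+K$ for any $Y$ whose members are merely nonnegative on an \emph{infinite} set of coordinates. Eventual nonnegativity is not needed, and it is exactly the $\forall^\infty$ in your definitions that makes the pieces too small to cover oscillating points.

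The paper's proof uses this. Take a nonprincipal ultrafilter $\mathcal{U}$ on $\omega$ and let $A=\{x:\{n:x(n)\ge 0\}\in\mathcal{U}\}$ and $B=\mathbb{Z}^\omega\setminus A=\{x:\{n:x(n)<0\}\in\mathcal{U}\}$. Your Step~1 with $z=-b-1$ (resp.\ $z=b+1$) shows $A,B\in K_\sigma^*$. The reason is that the set of coordinates where $x=z-c$ has the required sign is finite, hence not in $\mathcal{U}$; this is where nonprincipality is used. Then $A\cup B=\mathbb{Z}^\omega\notin K_\sigma^*$, since $\mathbb{Z}^\omega+\{0\}=\mathbb{Z}^\omega$.

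So your search in Step~2 for a special $K$ was unnecessary: once the two pieces cover the space, $K=\{0\}$ works. The ultrafilter can even be dropped. Take $\{x:\exists^\infty n\ x(n)\ge 0\}$ and $\{x:\exists^\infty n\ x(n)<0\}$, which are the complements of your sets $Q$ and $X_0$ rather than the sets themselves. They also cover $\mathbb{Z}^\omega$ and lie in $K_\sigma^*$ by the same argument.
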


\begin{proof}
    Let $\mathcal{U}$ be an ultrafilter on $\omega$.
    Let $A=\{x\in\mathbb{Z}^\omega: \{n:x(n)\geqslant0\}\in \mathcal{U}\}$.
    Let $B=\mathbb{Z}^\omega\backslash A=\{x\in\mathbb{Z}^\omega: \{n:x(n)<0\}\in \mathcal{U}\}$
    Note that  $A,B\in K_\sigma^*$ and $A\cup B=\mathbb{Z}^\omega$ it follows that $K_\sigma^*$ is not an ideal.
\end{proof}

Next, we will use the operation $^*$  to show that some of the inclusions the diagram are proper. 
\begin{proposition}
        There exists $X\in \mathcal{H}$ such that $X\notin \mathcal{SMZ}^+.$
\end{proposition}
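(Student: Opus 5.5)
Our plan is to use the characterization $\mathcal{SMZ}^+=\mathcal{M}_-^*$ (Corollary after the Galvin--Mycielski--Solovay type theorem). It suffices to find a single generator $X=H_{\bar{I},f}\in\mathcal{H}$ and a set $M\in\mathcal{M}_-$ with $X+M=\mathbb{Z}^\omega$. The natural choice is to take $\bar{I}$ to be the partition into singletons, $f=0$, and $M=M(0,\bar{I})$ with the same singleton partition. Then
$$X=H_{\bar{I},0}=\{x:\forall^\infty n\;\; x(n)=0\},\qquad M=\{y:\forall^\infty n\;\; y(n)\neq 0\}.$$

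Next we check that $X+M=\mathbb{Z}^\omega$. Given any $z\in\mathbb{Z}^\omega$, we define $x$ and $y$ coordinatewise. If $z(n)\neq 0$, put $x(n)=0$ and $y(n)=z(n)$. If $z(n)=0$, put $x(n)=-1$ and $y(n)=1$.

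We then verify the two memberships. Every coordinate of $y$ is nonzero, so $y\in M$. However, $x(n)\neq 0$ whenever $z(n)=0$, so $x$ may fail to lie in $X$ if $z$ has infinitely many zeros. This is the main obstacle, and the fix is to swap the roles of the two sets.

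Since $X$ must be eventually $0$, it is essentially countable, and countable sets lie in $\mathcal{SMZ}^+$. So the singleton $H$ is too small, and we need $H_{\bar{I},f}$ with nontrivial intervals. We therefore take $\bar{I}$ with $|I_n|=2$, say $I_n=\{2n,2n+1\}$, and $f=0$, so that
$$X=\{x:\forall^\infty n\;\; x(2n)=0\ \vee\ x(2n+1)=0\}.$$
For $M$ we take $M(0,\bar{J})$ with $\bar{J}$ the singleton partition, so $M=\{y:\forall^\infty m\;\; y(m)\neq0\}$.

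Given $z$, we choose $y$ on each pair as follows. If $z(2n)\neq 0$, set $x(2n)=0$ and $y(2n)=z(2n)$. Otherwise set $x(2n)=-1$ and $y(2n)=1$. On the odd coordinate $2n+1$ we act symmetrically. The problem is that this forces $x$ to vanish on at least one coordinate of the pair only when $z$ is nonzero there, so the case where $z$ vanishes on both coordinates remains.

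The way around this is to let $y$ take values other than $\pm1$. On each coordinate $m$ we may choose $x(m)=0$ and $y(m)=z(m)$ when $z(m)\neq0$. When $z(m)=0$, we instead need $x(m)=-y(m)$ with $y(m)\neq 0$, so $x(m)\neq 0$. Consequently, if $z$ vanishes on an entire interval, then $x$ is nonzero on that interval, and this happens for infinitely many intervals when $z=0$.

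So with $f=0$ for both sets this fails, and we shift one of them. We take $M=M(c,\bar{J})$, where $c$ is the constant $1$ and $\bar{J}$ is the singleton partition, so $M=\{y:\forall^\infty m\;\; y(m)\neq1\}$. Keeping $X=H_{\bar{I},0}$, on each coordinate we set $x(m)=0$ and $y(m)=z(m)$ whenever $z(m)\neq1$. On a pair where $z$ equals $1$ at both coordinates, we use $x(2n)=0$, $y(2n)=1$ at the first coordinate and $x(2n+1)=2$, $y(2n+1)=-1$ at the second. This, however, violates $y\in M$ infinitely often.

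We therefore expect the correct choice to be a translated $H$: take $X=H_{\bar{I},f}$ and $M$ with different base points so that the two excluded values never coincide. Concretely, we take $X$ with $f=0$ and pairs as above, and $M=M(c,\bar{J})$ where $c(2n)=1$ and $c(2n+1)=2$. On each pair, at least one coordinate $m$ has $z(m)\neq c(m)$, say $z(2n)\neq1$. There we put $x(2n)=0$ and $y(2n)=z(2n)$, and on the other coordinate we choose $x(2n+1)=z(2n+1)-5$ and $y(2n+1)=5$, where $5\neq c(2n+1)=2$.

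With these choices $x$ vanishes somewhere on every interval, so $x\in X$, and $y(m)\neq c(m)$ for all $m$, so $y\in M$. Hence $X+M=\mathbb{Z}^\omega$, which shows $X\notin\mathcal{M}_-^*=\mathcal{SMZ}^+$. The step we expect to be delicate is precisely this bookkeeping choice of $c$ and the pair partition so that both memberships hold simultaneously.
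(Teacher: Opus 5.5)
\textbf{There is a genuine gap: your final construction fails.} The claim that on each pair $\{2n,2n+1\}$ at least one coordinate satisfies $z(m)\neq c(m)$ is false; take $z=c$.

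In fact $c\notin X+M$. Suppose $c=x+y$ with $x\in X=H_{\bar{I},0}$. For almost every $n$ there is $m\in\{2n,2n+1\}$ with $x(m)=0$, and there $y(m)=c(m)$. So $y(m)=c(m)$ for infinitely many $m$, and $y\notin M(c,\bar{J})$.

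No cleverer choice of $c$, $f$ or $\bar{I}$ fixes this; the obstruction is structural. With $\bar{J}$ the singleton partition, $M(c,\bar{J})$ is exactly $E_c\in\mathcal{ED}$. On the other side, $H_{\bar{I},f}\subseteq I_f\in\mathcal{IE}$, and the paper shows $\mathcal{IE}\subseteq\mathcal{ED}^*$, concretely $f+c\notin I_f+E_c$. So no $\mathcal{H}$-generator plus a singleton-partition generator of $\mathcal{M}_-$ can ever cover $\mathbb{Z}^\omega$. This same obstruction is what defeated each of your earlier attempts.

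\textbf{The missing idea} is to use a partition with nontrivial blocks on the $\mathcal{M}_-$ side as well. Then $y$ only has to differ from the base point at one coordinate per block, which leaves another coordinate of the block free for $x$ to hit $f$.

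The paper takes $\bar{I}$ to be the partition into pairs, $X=H_{\bar{I},1}$ and $M=M(0,\bar{I})$. Given $a\in\mathbb{Z}^\omega$, on the even coordinate of each pair it puts $y=1$ and $x=a-1$. On the odd coordinate it puts $x=1$ and $y=a-1$. Then $y\upharpoonright I_n\neq 0\upharpoonright I_n$ for every $n$, and $x$ takes the value $1$ somewhere in every $I_n$. Hence $X+M=\mathbb{Z}^\omega$, so $X\notin\mathcal{M}_-^*=\mathcal{SMZ}^+$.

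Your overall strategy — reduce via $\mathcal{SMZ}^+=\mathcal{M}_-^*$ and exhibit a covering sum of two generators — is exactly the paper's. Only the choice of the $\mathcal{M}_-$ generator needs to change.
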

\begin{proof}

    Take $\bar{I}$- interval partition into intervals of length two. Take $H_{\bar{I},1}$ and $M(0,\bar{I})$. We will show that $H_{\bar{I},1}+M(0,\bar{I})=\mathbb{Z}^\omega$. Take any $a\in \mathbb{Z}^\omega$, and construct $y\in M(0,\bar{I})$ and $x\in H_{\bar{I},1}$ such that:
    \begin{enumerate}

        \item If $n$ is odd, let $x(n)=1$ and $y(n)=a(n)-1$,
        \item if $n$ is even, let $x(n)=a(n)-1$ and $y(n)=1.$
         
    \end{enumerate}

    Of course $y\in M(0, \bar{I})$, because in every interval we can find $m$ such that $y(m)=1$. Similarly $x\in H_{\bar{I},f}$ because in every interval we can find $m$ such that $x(m)=1.$
\end{proof}

\begin{cor}
    There exists $X\in \mathcal{IE}$ such that $X\notin \mathcal{SMZ}^+.$
\end{cor}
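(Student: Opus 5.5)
The corollary follows from the preceding proposition and the inclusion $\mathcal{H}\subseteq\mathcal{IE}$ proved in Chapter 2. I will take $X=H_{\bar{I},1}$, where $\bar{I}$ is the partition of $\omega$ into intervals of length two and $1$ is the constant function. This is the set constructed in the preceding proposition.

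First, $X\in\mathcal{IE}$. For any $\bar{I}$ and $f$ we showed $H_{\bar{I},f}\subseteq I_f$: if for almost all $n$ some $m\in I_n$ has $x(m)=f(m)$, then $x(m)=f(m)$ for infinitely many $m$. So $H_{\bar{I},1}\subseteq I_1\in\mathcal{IE}$, and since $\mathcal{IE}$ is closed under subsets, $X\in\mathcal{IE}$.

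Second, $X\notin\mathcal{SMZ}^+$. The proof of the preceding proposition gives $H_{\bar{I},1}+M(0,\bar{I})=\mathbb{Z}^\omega$, with $M(0,\bar{I})\in\mathcal{M}_-$ a generator. By the Baire-space Galvin–Mycielski–Solovay theorem (equivalently, the corollary $\mathcal{SMZ}^+=\mathcal{M}_-^*$), a set in $\mathcal{SMZ}^+$ cannot cover $\mathbb{Z}^\omega$ when added to a member of $\mathcal{M}_-$. Hence $X\notin\mathcal{SMZ}^+$.

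The only point needing care is the covering identity itself, so I would double-check the decomposition. Given $a$, on each pair $I_n$ one coordinate of $x$ is set to $1$ and the other to $a(\cdot)-1$. The matching coordinates of $y$ are $a(\cdot)-1$ and $1$ respectively. Then $x+y=a$, $x$ meets $1$ in every interval, and $y$ has a coordinate equal to $1\neq 0$ in every interval, so $y\in M(0,\bar{I})$. Nothing else is needed.
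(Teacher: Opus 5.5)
Your proposal is correct and is exactly the paper's argument: the set $H_{\bar{I},1}$ from the preceding proposition is not in $\mathcal{SMZ}^+=\mathcal{M}_-^*$ because $H_{\bar{I},1}+M(0,\bar{I})=\mathbb{Z}^\omega$, and it lies in $\mathcal{IE}$ via $H_{\bar{I},1}\subseteq I_1$, i.e.\ $\mathcal{H}\subseteq\mathcal{IE}$. Your recheck of the coordinatewise decomposition and your use of the forward direction of the Baire-space Galvin--Mycielski--Solovay theorem are both accurate.
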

\begin{thm}
    There exists $Y\in \mathcal{ED}$ such that $Y\notin \mathcal{SMZ}^+.$
\end{thm}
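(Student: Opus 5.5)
Since $\mathcal{SMZ}^+=\mathcal{M}_-^*$ (the Baire-space Galvin–Mycielski–Solovay theorem proved above), it suffices to find $f\in\mathbb{Z}^\omega$, an interval partition $\bar{I}$ and $y\in\mathbb{Z}^\omega$ with
$$E_f+M(y,\bar{I})=\mathbb{Z}^\omega .$$
Then $Y=E_f\in\mathcal{ED}$ but $Y\notin\mathcal{SMZ}^+$. This is the same strategy as in the preceding proposition for $H_{\bar{I},1}$. I would take the simplest choices: $f=0$, so $E_0=\{x:\forall^\infty n\; x(n)\neq 0\}$, and $\bar{I}$ the partition into intervals of length two, $I_n=\{2n,2n+1\}$, with $y=0$. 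Thus $M(0,\bar{I})$ consists of those $w$ such that for almost all $n$ the pair $(w(2n),w(2n+1))$ is not $(0,0)$.

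Given an arbitrary $a\in\mathbb{Z}^\omega$, I write $a=x+w$ with $x\in E_0$ and $w\in M(0,\bar{I})$, defined coordinatewise on each block $I_n$. At the even coordinate I put $w(2n)=1$ and $x(2n)=a(2n)-1$. For $x(2n)\neq 0$ we need $a(2n)\neq 1$; when $a(2n)=1$ I instead put $w(2n)=2$ and $x(2n)=-1$. In both cases $w(2n)\neq 0$, so $w\upharpoonright I_n\neq 0$ for every $n$ and hence $w\in M(0,\bar{I})$. At the odd coordinate I need only $x(2n+1)\neq 0$, so I use the same rule. In fact one block could even be a singleton: using the partition into singletons, set $w(k)=1$, $x(k)=a(k)-1$ if $a(k)\neq 1$, and $w(k)=2$, $x(k)=-1$ otherwise. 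Then $x(k)\neq 0$ and $w(k)\neq 0$ for every $k$, so $x\in E_0$ and $w\in M(0,\bar{I})$ with $\bar{I}$ the singleton partition.

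So the proof is just this decomposition: every integer is a sum of two nonzero integers. The only care needed is to check that the case distinction keeps both summands nonzero at every coordinate, which guarantees $x\in E_0$ and $w\in M(0,\bar{I})$, not merely for almost all coordinates. I do not expect any real obstacle. Alternatively, one could derive the result from $\mathcal{IE}=\mathcal{ED}^*$: if $\mathcal{ED}\subseteq\mathcal{SMZ}^+\subseteq\mathcal{IE}$, one would need $E_0\in\mathcal{IE}$. However, showing that $E_0\notin\mathcal{IE}$ is a diagonalization of comparable effort, so I would present the direct decomposition above.
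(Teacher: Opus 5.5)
Your proof is correct and matches the paper's argument: the paper also takes $Y=E_0$ and the singleton-partition set $M(0,\bar I)$, writes each $a(n)$ as a sum of two nonzero integers (using $1=2+(-1)$ in the exceptional case), and concludes via $\mathcal{SMZ}^+=\mathcal{M}_-^*$. The detour through length-two blocks and the aside about $\mathcal{IE}$ are unnecessary but harmless.
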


\begin{proof}
    Let $Y=\{x:\forall^\infty n \;\; x(n)\neq 0\}\in \mathcal{ED}.$ We will show that $Y\notin \mathcal{M}_-^*=\mathcal{SMZ}^+.$ Take $M(0,\bar{I})$, where $\bar{I}$ is partition into singletons. Take any $a\in \mathbb{Z}^\omega.$ We have to find $f\in Y$ and $g\in M(0,\bar{I})$ such that $f+g=a$.

    \begin{enumerate}

        \item If $a(n)\neq 1$ let $f(n)=z-1$ and $g(n)=1$

        \item  If $a(n)=1$ let $f(n)=2$ and $g(n)=-1.$
    \end{enumerate}

    Of course $g\in M(0,\bar{I})$ because $g(n)\neq 0$ for every $n$. Also $f\in Y $ because $f(n)\neq 0$ for every $n.$
\end{proof}

\begin{cor}
    There exists $Y\in \mathcal{G}$ such that $Y\notin\mathcal{SMZ}^+.$
\end{cor}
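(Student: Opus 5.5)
This corollary follows from the preceding theorem together with the inclusion $\mathcal{ED}\subseteq\mathcal{G}$. The plan is to take the very same set $Y=\{x:\forall^\infty n\;\; x(n)\neq 0\}$ used in the previous theorem and check two things. First, $Y\in\mathcal{G}$. Second, $Y\notin\mathcal{SMZ}^+$.

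For the first point, observe that $Y=E_0\in\mathcal{ED}$, i.e.\ the generator $E_f$ with $f$ the constant zero function. By the proposition $\mathcal{ED}\subseteq\mathcal{G}$, we get $Y\in\mathcal{G}$. One can also verify this directly. Given any interval partition $\bar{I}$, take $z=0$. Every $x\in Y$ satisfies $x(m)\neq 0$ for all but finitely many $m$. Hence for almost all $n$ we have $\forall m\in I_n\;\; x(m)\neq z(m)$, and in particular this holds for infinitely many $n$.

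For the second point, we invoke the preceding theorem directly. It shows that $Y+M(0,\bar{I})=\mathbb{Z}^\omega$ for the partition into singletons. Since $M(0,\bar{I})\in\mathcal{M}_-$, this means $Y\notin\mathcal{M}_-^*$. By the corollary $\mathcal{SMZ}^+=\mathcal{M}_-^*$, it follows that $Y\notin\mathcal{SMZ}^+$.

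There is no real obstacle. The only care needed is in the decomposition $a=f+g$ from the previous proof. There one should read $f(n)=a(n)-1$, $g(n)=1$ when $a(n)\neq 1$, and $f(n)=2$, $g(n)=-1$ when $a(n)=1$. With this reading, $f(n)\neq 0$ and $g(n)\neq 0$ for every $n$, so indeed $f\in Y$ and $g\in M(0,\bar{I})$.
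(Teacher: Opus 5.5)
Your proof is correct and is the paper's own argument: the corollary is read off from the preceding theorem ($Y=E_0\in\mathcal{ED}$ and $Y\notin\mathcal{SMZ}^+$) together with the inclusion $\mathcal{ED}\subseteq\mathcal{G}$. Your direct check that $Y\in\mathcal{G}$ (with $z=0$) is valid, and you were right to correct the typo in the previous proof ($f(n)=a(n)-1$ instead of $z-1$).
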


Next, we will need a Theorem from \cite{cccccc} to show that, under additional assumptions, we can separate some of these ideals.

\begin{thm} \label{bbbbb}
    If $\mathcal{J} \subseteq \mathcal{P}(\mathbb{Z}^{\omega})$ is a translation  and reflection invariant proper $\sigma$-ideal such that  $cof(\mathcal{J})\leqslant \mathfrak{c}$, $add(\mathcal{J})= \mathfrak{c}$ and for any $A \notin \mathcal{J}$, we have:
$$(\mathcal{J} \cup \{A\})^* \neq \mathcal{J}^*.$$
\end{thm}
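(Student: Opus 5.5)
Given $A\notin\mathcal{J}$, the plan is to build, by transfinite recursion of length $\mathfrak{c}$, a set $X\subseteq\mathbb{Z}^\omega$ with
\[
X\in\mathcal{J}^*\setminus(\mathcal{J}\cup\{A\})^* .
\]
Membership $X\notin(\mathcal{J}\cup\{A\})^*$ only needs $X+A=\mathbb{Z}^\omega$, since the generator $A$ already witnesses failure. To get $X\in\mathcal{J}^*$ we must have $X+B\neq\mathbb{Z}^\omega$ for every $B\in\mathcal{J}$. Because $\mathcal{J}^*$ is closed under subsets and $\mathcal{J}$ is closed under subsets, it suffices to handle $B$ from a cofinal family. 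Since $cof(\mathcal{J})\le\mathfrak{c}$, fix a cofinal family $\{B_\alpha:\alpha<\mathfrak{c}\}\subseteq\mathcal{J}$, and enumerate $\mathbb{Z}^\omega=\{z_\alpha:\alpha<\mathfrak{c}\}$.

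At stage $\alpha$ we will have chosen points $x_\beta\in X$ and witnesses $w_\beta\notin X+B_\beta$ for $\beta<\alpha$. The inductive requirement is that $w_\beta\notin x_\gamma+B_\beta$ for all $\beta,\gamma<\alpha$, equivalently $x_\gamma\notin w_\beta-B_\beta$. At stage $\alpha$ we do two things.

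First, we choose $x_\alpha\in z_\alpha-A$, so that $z_\alpha\in x_\alpha+A$. We also need $x_\alpha\notin\bigcup_{\beta\le\alpha}(w_\beta-B_\beta)$. The union $\bigcup_{\beta<\alpha}(w_\beta-B_\beta)$ lies in $\mathcal{J}$ by translation and reflection invariance together with $add(\mathcal{J})=\mathfrak{c}>|\alpha|$. Meanwhile $z_\alpha-A\notin\mathcal{J}$ by the same invariance, since $A\notin\mathcal{J}$. So a suitable $x_\alpha$ exists.

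Second, we choose $w_\alpha\notin\bigcup_{\gamma\le\alpha}(x_\gamma+B_\alpha)$. This union also lies in $\mathcal{J}$ by additivity, and $\mathcal{J}$ is proper, so it is not all of $\mathbb{Z}^\omega$ and such a $w_\alpha$ exists.

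The order of choices within a stage needs care. When $x_\alpha$ is chosen, $w_\alpha$ does not yet exist, so we avoid only $w_\beta-B_\beta$ for $\beta<\alpha$. The constraint relating $x_\alpha$ to $w_\alpha$ is then enforced when $w_\alpha$ is chosen, by making it avoid $x_\gamma+B_\alpha$ for all $\gamma\le\alpha$. Consequently every pair $(\gamma,\beta)$ with $\gamma\le\beta$ is handled at the choice of $w_\beta$, and every pair with $\gamma>\beta$ is handled at the choice of $x_\gamma$.

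Finally set $X=\{x_\alpha:\alpha<\mathfrak{c}\}$. Then $X+A=\mathbb{Z}^\omega$, because each $z_\alpha\in x_\alpha+A$. For each $\beta$ we have $w_\beta\notin X+B_\beta$, so $X+B\neq\mathbb{Z}^\omega$ for every $B\in\mathcal{J}$ by cofinality. Hence $X\in\mathcal{J}^*\setminus(\mathcal{J}\cup\{A\})^*$, which gives $(\mathcal{J}\cup\{A\})^*\neq\mathcal{J}^*$.

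The main obstacle is the bookkeeping just described: interleaving the two constraints so that each is met at some stage. Beyond that, the one point to verify is that sets of the form $w-B$ lie in $\mathcal{J}$ and that $z-A\notin\mathcal{J}$. This is exactly where reflection and translation invariance are needed, and additivity $\mathfrak{c}$ then keeps the unions of fewer than $\mathfrak{c}$ such sets inside $\mathcal{J}$.
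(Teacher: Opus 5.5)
Your proof is correct. The recursion yields $X\in\mathcal{J}^*$ with $X+A=\mathbb{Z}^\omega$. Splitting the pairs into $\gamma\le\beta$ (handled when $w_\beta$ is chosen) and $\gamma>\beta$ (handled when $x_\gamma$ is chosen) covers every constraint $w_\beta\notin x_\gamma+B_\beta$. Translation and reflection invariance, together with $add(\mathcal{J})=\mathfrak{c}$ and properness, justify both choices at each stage.

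The paper does not prove this statement; it only cites it from \cite{cccccc}. So there is no in-paper argument to compare against, and your transfinite diagonalization is a complete, self-contained proof.
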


\begin{cor}
    Assuming $add(\mathcal{H})=\mathfrak{c}$ there exists $X\in \mathcal{G}$ such that  $X\notin\mathcal{ED}$.
\end{cor}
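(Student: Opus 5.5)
We will apply \Cref{bbbbb} with $\mathcal{J}=\mathcal{H}$. We read that theorem as asserting that for such a $\mathcal{J}$ and any $A\notin\mathcal{J}$ one has $(\mathcal{J}\cup\{A\})^*\neq\mathcal{J}^*$.

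The argument starts from the already established dualities $\mathcal{H}^*=\mathcal{G}$ and $\mathcal{IE}^*=\mathcal{ED}$, together with $\mathcal{H}\subsetneq\mathcal{IE}$; the last fact is the earlier proposition producing $X\in\mathcal{IE}\setminus\mathcal{H}$, namely $I_f$ itself. Suppose towards a contradiction that $\mathcal{G}\subseteq\mathcal{ED}$. Since $\mathcal{ED}\subseteq\mathcal{G}$ is already known, we would get $\mathcal{G}=\mathcal{ED}$, i.e.\ $\mathcal{H}^*=\mathcal{IE}^*$. Take $A=I_f\in\mathcal{IE}\setminus\mathcal{H}$. Then $\mathcal{H}\subseteq\mathcal{H}\cup\{A\}\subseteq\mathcal{IE}$, so by antimonotonicity of $^*$
\[
\mathcal{IE}^*\subseteq(\mathcal{H}\cup\{A\})^*\subseteq\mathcal{H}^*=\mathcal{IE}^*,
\]
which forces $(\mathcal{H}\cup\{A\})^*=\mathcal{H}^*$. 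This contradicts \Cref{bbbbb}, so some $X\in\mathcal{G}\setminus\mathcal{ED}$ exists.

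It remains to verify the hypotheses of \Cref{bbbbb} for $\mathcal{H}$.
\begin{itemize}
\item \emph{$\sigma$-ideal and proper.} $\mathcal{H}$ is a $\sigma$-ideal by definition, and it is proper since $\mathcal{H}\subseteq\mathcal{M}_-\subseteq\mathcal{M}$.
\item \emph{Translation invariance.} We have $H_{\bar I,f}+y=H_{\bar I,f+y}$.
\item \emph{Reflection invariance.} We have $-H_{\bar I,f}=H_{\bar I,-f}$.
\item \emph{Additivity.} $add(\mathcal{H})=\mathfrak{c}$ is the assumption.
\item \emph{Cofinality.} $cof(\mathcal{H})\le\mathfrak{c}$ holds because, by the remark that countable unions of generators are contained in a single generator, the family $\{H_{\bar I,f}:\bar I\in\mathbb{IP},\ f\in\mathbb{Z}^\omega\}$ is cofinal in $\mathcal{H}$, and it has size $\mathfrak{c}$.
\end{itemize}

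The main point needing care is the exact form of \Cref{bbbbb}, since its statement in the text appears truncated. If its conclusion is instead phrased as the existence of $B\in\mathcal{J}^*\setminus(\mathcal{J}\cup\{A\})^*$, the same choice of $A$ gives $B\in\mathcal{G}$ with $B+A=\mathbb{Z}^\omega$ up to translation. Since $A\in\mathcal{IE}$ and $\mathcal{IE}^*=\mathcal{ED}$, this shows $B\notin\mathcal{ED}$ directly.
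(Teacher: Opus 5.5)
\textbf{There is a genuine gap: the step $\mathcal{ED}\subseteq\mathcal{IE}^*$ is false, and with $A=I_f$ the argument cannot work.}

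The paper's proof is only a bare appeal to Theorem~\ref{bbbbb}. Your expansion is the natural reading of it: take $\mathcal{J}=\mathcal{H}$, use $\mathcal{H}^*=\mathcal{G}$ and $\mathcal{IE}^*=\mathcal{ED}$, and set $A=I_f\in\mathcal{IE}\setminus\mathcal{H}$. Your check of the hypotheses for $\mathcal{H}$ is fine.

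Both versions of your argument rest on the inclusion $\mathcal{ED}\subseteq\mathcal{IE}^*$. The first needs $\mathcal{H}^*=\mathcal{G}=\mathcal{ED}\subseteq\mathcal{IE}^*$ to close the chain. The second needs ``$B\in\mathcal{ED}$ implies $B+A\neq\mathbb{Z}^\omega$''. That inclusion fails. For any $f,g$ one computes $E_g+I_f=\{z:\exists^\infty n\ z(n)\neq g(n)+f(n)\}$, whose complement is the countable set $\{z: z=^*g+f\}$. Writing $E_0,E_1$ for the constant functions $0$ and $1$, this gives $(E_0\cup E_1)+I_f=\mathbb{Z}^\omega$, although $E_0\cup E_1\in\mathcal{ED}$. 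Similarly $E_0+(I_0\cup I_1)=\mathbb{Z}^\omega$.

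The proof of $\mathcal{IE}^*=\mathcal{ED}$ only checks generators. That is legitimate for $\mathcal{H}$ and $\mathcal{M}_-$, where every member lies inside a single generator, but not for $\mathcal{ED}$ and $\mathcal{IE}$. Worse, for $A=I_f$ the conclusion of Theorem~\ref{bbbbb} already holds in ZFC, witnessed by $B=E_0\cup E_1\in\mathcal{ED}\subseteq\mathcal{G}=\mathcal{H}^*$. So this choice of $A$ can never separate $\mathcal{G}$ from $\mathcal{ED}$, and no contradiction with $\mathcal{G}=\mathcal{ED}$ can come from it.

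\textbf{How to repair it.} What the argument needs is some $A\in\mathcal{ED}^*\setminus\mathcal{H}$. Then Theorem~\ref{bbbbb} gives $B\in\mathcal{H}^*=\mathcal{G}$ with $B+A=\mathbb{Z}^\omega$, and $A\in\mathcal{ED}^*$ rules out $B\in\mathcal{ED}$. Since $\mathcal{SMZ}\subseteq\mathcal{SMZ}^+=\mathcal{M}_-^*\subseteq\mathcal{ED}^*$, any non-meager strong measure zero set works.

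Such a set exists under your hypothesis:
\begin{itemize}
\item Every member of $\mathcal{H}$ lies in a single $H_{\bar I,g}\subseteq I_g$. Hence an eventually different family (Theorem~\ref{xd}) is not in $\mathcal{H}$, so $\mathfrak{c}=add(\mathcal{H})\leq non(\mathcal{H})\leq cov(\mathcal{M})$.
\item Since $cov(\mathcal{M})=\mathfrak{c}$, there is a generalized Luzin set $L$: $|L|=\mathfrak{c}$ and $|L\cap M|<\mathfrak{c}$ for every meager $M$.
\item $L$ is strong measure zero, because sets of size $<cov(\mathcal{M})$ are. Cover a countable dense set using the even-indexed lengths, then cover the fewer than $\mathfrak{c}$ remaining points of $L$ using the odd-indexed lengths.
\end{itemize}

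But then $L$ itself lies in $\mathcal{SMZ}\subseteq\mathcal{G}$ and is non-meager, hence $L\notin\mathcal{ED}$. So $L$ witnesses the corollary directly, and Theorem~\ref{bbbbb} is not needed at all.
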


\begin{proof}
    
Since assuming $add(\mathcal{H})=\mathfrak{c}$ from Theorem \ref{bbbbb} we get $\mathcal{ED}\neq \mathcal{G}.$
\end{proof}

More general Theorem was showed in \cite{umeager}, but we present an easier proof.

\begin{thm}
    $\mathcal{SMZ}\subseteq \mathcal{UM}^*$.
    
\end{thm}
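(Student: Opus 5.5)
We must show: if $X\in\mathcal{SMZ}$ and $U\in\mathcal{UM}$, then $X+U\neq\mathbb{Z}^\omega$. By Theorem \ref{UM} we may assume $U\subseteq M(a,\psi)=\{x:\forall^\infty n\;\; x\upharpoonright[a_n,a_{n+1})\neq\psi(x\upharpoonright a_n)\}$ for some increasing $a\in\omega^\omega$ and some $\psi$ with $|\psi(\sigma)|=a_{n+1}-a_n$ for $\sigma\in\mathbb{Z}^{a_n}$. Then
$$X+U\subseteq\bigcup_{x\in X}\bigl(x+M(a,\psi)\bigr),$$
and it suffices to produce $z$ with $z-x\notin M(a,\psi)$ for every $x\in X$. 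That is, for each $x\in X$ there must be infinitely many $n$ with $(z-x)\upharpoonright[a_n,a_{n+1})=\psi((z-x)\upharpoonright a_n)$.

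Apply the characterization of $\mathcal{SMZ}$ in the Baire space (the Proposition following the definition) to $X$ with the sequence $k_n=a_{n+1}$. This gives $\sigma_n\in\mathbb{Z}^{a_{n+1}}$ with every $x\in X$ satisfying $\sigma_n\subseteq x$ for infinitely many $n$. We need to take a sparse subsequence: infinitely many hits must survive, and the blocks must be disjoint. So we first pass to a thinned version, using the blocks $[a_{n_j},a_{n_j+1})$ along a chosen infinite set of indices. More simply, we build $z$ block by block by recursion on $n$, assuming $z\upharpoonright a_n$ is already defined. Let $\tau=z\upharpoonright a_n-\sigma_n\upharpoonright a_n$, and define
$$z\upharpoonright[a_n,a_{n+1})=\sigma_n\upharpoonright[a_n,a_{n+1})+\psi(\tau).$$
The recursion is legitimate because $\psi(\tau)$ depends only on $z\upharpoonright a_n$ and $\sigma_n$, and $\psi$ is applied to a sequence of length $a_n$.

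Now verify. Fix $x\in X$ and any $n$ with $\sigma_n\subseteq x$, of which there are infinitely many. Then $(z-x)\upharpoonright a_n=z\upharpoonright a_n-\sigma_n\upharpoonright a_n=\tau$, and
$$(z-x)\upharpoonright[a_n,a_{n+1})=z\upharpoonright[a_n,a_{n+1})-\sigma_n\upharpoonright[a_n,a_{n+1})=\psi(\tau)=\psi((z-x)\upharpoonright a_n).$$
Hence $z-x$ matches $\psi$ on infinitely many blocks, so $z-x\notin M(a,\psi)$. Therefore $z\notin X+U$.

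The only delicate point is that a single sequence $\sigma_n$ must handle the dependence of $\psi$ on the entire initial segment $(z-x)\upharpoonright a_n$. This is exactly why we need full initial segments $\sigma_n$ of length $a_{n+1}$ that agree with $x$ from coordinate $0$, rather than mere block agreement as in $\mathcal{SMZ}^+$. The strong measure zero hypothesis supplies these, and this is where the argument would fail for $\mathcal{SMZ}^+$. I expect no serious obstacle beyond this choice of $k_n=a_{n+1}$ and the recursive definition of $z$.
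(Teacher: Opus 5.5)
Your proof is correct and is essentially the paper's argument. The paper builds
$\tau_n=\tau_{n-1}^\frown\bigl(\psi(\tau_{n-1}-\sigma_n\upharpoonright a_n)+\sigma_n\upharpoonright[a_n,a_{n+1})\bigr)$, which is exactly your recursion for $z$. It takes the $\sigma_n$ of length $a_{n+1}$, as you do; its stated length $a_{n+1}-a_n$ is a slip, inconsistent with its own use of $\sigma_n\upharpoonright[a_0,a_n)$. Your explicit verification that $z-x$ matches $\psi$ on every block $n$ with $\sigma_n\subseteq x$ supplies what the paper leaves implicit. The aside about thinning to a sparse subsequence is unnecessary and can be dropped, and $z\upharpoonright a_0$ may simply be chosen arbitrarily.
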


\begin{proof}
    Take any $X\in\mathcal{SMZ}$ and $H\in \mathcal{UM}$ by \ref{UM} there are $\bar{a}=(a_n)_{n<\omega}$ and  $\psi:\bigcup\omega^{a_{n}}\rightarrow\mathbb{Z}^{<\omega} $ such that $H\subseteq M(a,\psi)$. Take $a_0=0.$ Let $(\sigma_n)_{n<\omega}$ be such that $|\sigma_n|=a_{n+1}-a_n$ and  $X\subseteq\{x:\exists ^\infty _n \;\; \sigma_n\subseteq x\}.$

    We will construct $\tau\notin X+H$. Let:
    \begin{enumerate}

        \item  $\tau_0=\psi(\emptyset)+\sigma_0$
        \item $\tau_n= \tau_{n-1}^\frown (\psi (\tau_{n-1} -\sigma_n \upharpoonright [a_0,a_n))+\sigma_n\upharpoonright [a_n, a_{n+1})) $
    \end{enumerate}

Let $\tau=\bigcup_{n} \tau_n$.
\end{proof}

\begin{cor}
 $add(\mathcal{M})=\mathfrak{c}$ implies that there is $X\in \mathcal{SMZ}^+$ such that  $X\notin\mathcal{SMZ}$.
\end{cor}

\begin{proof}
   We know that $\mathcal{SMZ}\subseteq\mathcal{UM}^*$. By \ref{bbbbb} we get that $\mathcal{SMZ}\neq\mathcal{SMZ}^+$. 
\end{proof}

Next we will show corollary related to cardinal invariants of ideals in $\mathbb{Z}^\omega$.

\begin{cor}
$$cov(\mathcal{M})=cov_t(\mathcal{M}_-)=non(\mathcal{SMZ}^+)= non(\mathcal{SMZ})= non(\mathcal{M}^*)=cov_t(\mathcal{M})= cov(\mathcal{M})$$
\end{cor}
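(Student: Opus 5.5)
The corollary is a cyclic chain of (in)equalities, and I will close the cycle. The plan is to prove the two chains
$$cov(\mathcal{M})=cov_t(\mathcal{M}_-)\le non(\mathcal{SMZ}^+)\le cov_t(\mathcal{M}_-)$$
and
$$non(\mathcal{SMZ})\le non(\mathcal{M}^*)\le cov_t(\mathcal{M})=cov(\mathcal{M}),$$
together with $cov(\mathcal{M})\le non(\mathcal{SMZ})$. The first equality is Proposition~\ref{covt}. The last equality $cov_t(\mathcal{M})=cov(\mathcal{M})$ on $\mathbb{Z}^\omega$ is the result of \cite{MILLER200652} quoted before Proposition~\ref{covt}.

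\textbf{Linking $non(\mathcal{I}^*)$ with $cov_t(\mathcal{I})$.} I use the general fact that $non(\mathcal{I}^*)=cov_t(\mathcal{I})$ for a translation-invariant ideal $\mathcal{I}$; this is the remark after the definition of $cov_t$.
\begin{itemize}
\item If $Y\in\mathcal{I}$ and $Y+A=\mathbb{Z}^\omega$, then $A\notin\mathcal{I}^*$, so $non(\mathcal{I}^*)\le cov_t(\mathcal{I})$.
\item Conversely, if $A\notin\mathcal{I}^*$, there is $F\in\mathcal{I}$ with $A+F=\mathbb{Z}^\omega$, witnessing $cov_t(\mathcal{I})\le|A|$.
\end{itemize}
Applied to $\mathcal{I}=\mathcal{M}_-$, together with the Corollary $\mathcal{SMZ}^+=\mathcal{M}_-^*$, this gives $non(\mathcal{SMZ}^+)=cov_t(\mathcal{M}_-)=cov(\mathcal{M})$. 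Applied to $\mathcal{I}=\mathcal{M}$, it gives $non(\mathcal{M}^*)=cov_t(\mathcal{M})=cov(\mathcal{M})$.

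\textbf{Placing $non(\mathcal{SMZ})$.} Wohofsky's inclusion $\mathcal{M}^*\subseteq\mathcal{SMZ}$ yields $non(\mathcal{SMZ})\le non(\mathcal{M}^*)$. The inclusion $\mathcal{SMZ}\subseteq\mathcal{SMZ}^+$ yields $non(\mathcal{SMZ})\le non(\mathcal{SMZ}^+)$. Both upper bounds equal $cov(\mathcal{M})$, so $non(\mathcal{SMZ})\le cov(\mathcal{M})$.

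\textbf{Main obstacle: the lower bound $non(\mathcal{SMZ})\ge cov(\mathcal{M})$.} Let $X\subseteq\mathbb{Z}^\omega$ with $|X|<cov(\mathcal{M})$, and fix an increasing sequence $(k_n)$. I consider the space $\prod_n\mathbb{Z}^{k_n}$ of sequences $(\sigma_n)$, which is homeomorphic to the Baire space. For each $x\in X$, the set
$$\{(\sigma_n):\ \forall^\infty n\ \sigma_n\neq x\upharpoonright k_n\}$$
is an eventually-different set in this coordinatization, hence meager. Since fewer than $cov(\mathcal{M})$ meager sets cannot cover the space, some $(\sigma_n)$ avoids all of them. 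For that $(\sigma_n)$, every $x\in X$ satisfies $\sigma_n\subseteq x$ for infinitely many $n$. By the Proposition characterizing strong measure zero via increasing $(k_n)$, this gives $X\in\mathcal{SMZ}$. Equivalently, I can invoke \Cref{xd}: an eventually different family in this coordinatization must have size at least $cov(\mathcal{M})$.

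Combining the three steps closes the cycle, and all the listed cardinals equal $cov(\mathcal{M})$.
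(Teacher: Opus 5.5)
Your proof is correct, but one step is misoriented, and this changes the shape of your argument. An inclusion $\mathcal{I}\subseteq\mathcal{J}$ gives $non(\mathcal{I})\le non(\mathcal{J})$, because a set outside $\mathcal{J}$ is also outside $\mathcal{I}$. So Wohofsky's $\mathcal{M}^*\subseteq\mathcal{SMZ}$ yields $non(\mathcal{M}^*)\le non(\mathcal{SMZ})$, not $non(\mathcal{SMZ})\le non(\mathcal{M}^*)$ as you wrote. The slip is harmless for your upper bound, since $\mathcal{SMZ}\subseteq\mathcal{SMZ}^+$ alone gives $non(\mathcal{SMZ})\le non(\mathcal{SMZ}^+)=cov(\mathcal{M})$.

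However, the correctly oriented inequality is exactly the lower bound you call the main obstacle. The paper closes the cycle as
\[
cov(\mathcal{M})=cov_t(\mathcal{M})=non(\mathcal{M}^*)\le non(\mathcal{SMZ})\le non(\mathcal{SMZ}^+)=cov_t(\mathcal{M}_-)=cov(\mathcal{M}).
\]
This uses only the two inclusions, the identity $non(\mathcal{I}^*)=cov_t(\mathcal{I})$, $\mathcal{SMZ}^+=\mathcal{M}_-^*$, Proposition~\ref{covt}, and the result of Miller et al.

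Your replacement for the lower bound is a valid, standard Rothberger-style proof of $cov(\mathcal{M})\le non(\mathcal{SMZ})$. You code $(\sigma_n)$ as a point of $\prod_n\mathbb{Z}^{k_n}\cong\omega^\omega$ and observe that fewer than $cov(\mathcal{M})$ eventually-different meager sets cannot cover that space. This makes the step self-contained, and once you drop or reverse the misstated inequality, your proof no longer needs Wohofsky's theorem at all. The paper's route is shorter, since it gets the lower bound for free from $\mathcal{M}^*\subseteq\mathcal{SMZ}$.
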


\begin{proof}
    From \ref{covt} we know that $cov(\mathcal{M})=cov_t(\mathcal{M}_-)$. Since $\mathcal{M}_-^*=\mathcal{SMZ}^+$ we get the next equality. Because $\mathcal{SMZ}\subseteq \mathcal{SMZ}^+$ we get $non(\mathcal{SMZ})\leqslant non(\mathcal{SMZ}^+)$. Since $\mathcal{M}^*\subseteq\mathcal{SMZ}$ we have $non(\mathcal{M}^*)\leqslant non (\mathcal{SMZ})$. Of course $non(\mathcal{M}^*)=cov_t(\mathcal{M})$. Also we have $cov(\mathcal{M})= cov_t(\mathcal{M})$. So we get that all the above cardinal invariants are equal to $cov(\mathcal{M}).$
\end{proof}

\section{Tree dichotomy for $\mathcal{M}_-$}

In \cite{KHOMSKII} Khomskii and Laguzzi created a tree dichotomy among others for the ideal $\mathcal{ED}$. Now we will do the same for $\mathcal{M}_-$ ideal.

\begin{df}
    Let $\mathbb{T}_{\mathcal{M}_-} $ be the collection of all trees $T\subseteq\omega^\omega$ such that:
    $$\forall \sigma \in T \;\; \exists M >|\sigma|\;\; \forall I\subseteq \omega \backslash M\;\;\forall \rho \in \omega ^I \;\;\exists \tau \in T\upharpoonright\sigma \;\; \tau \upharpoonright I=\rho,$$
    where $I$ is an interval.
\end{df}

Note that for every $T\in \mathbb{T_{\mathcal{M}_-}}$  we have that $[T]\notin\mathcal{M}_-.$

\begin{thm}
    For any analytic $A\subseteq\omega^{\omega}$ we have that
    \begin{center}
     either $A\in \mathcal{M}_-$ or $\exists T \in \mathbb{T}_{\mathcal{M}_-}$ such that $[T]\subseteq A.$    
    \end{center}
   In particular, the assignment $T\in \mathbb{T}_{\mathcal{M}_-}\longrightarrow [T]\in Bor /{\mathcal{M}_-}$ is a dense embedding and the forcing notions $(\mathbb{T}_{\mathcal{M}_-}, \subseteq)$ and $(Bor /{\mathcal{M}_-}, \subseteq)$ are forcing equivalent.
    
\end{thm}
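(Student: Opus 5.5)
The plan is to follow the Khomskii--Laguzzi approach for $\mathcal{ED}$ and run a rank (derivative) argument on a tree representing $A$. Since $A$ is analytic, I fix a tree $S$ on $\omega\times\omega$ with $A=p[S]$. The dichotomy will come from a game, or equivalently a Cantor--Bendixson style derivative on nodes of $S$: a node is \emph{bad} if the part of $A$ it projects to lies in $\mathcal{M}_-$. Concretely, for $(\sigma,s)\in S$ I let $A_{(\sigma,s)}=p[S\upharpoonright(\sigma,s)]$.

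\textbf{Step 1 (pruning).} I remove from $S$ all nodes $(\sigma,s)$ with $A_{(\sigma,s)}\in\mathcal{M}_-$. Because there are only countably many nodes and $\mathcal{M}_-$ is a $\sigma$-ideal, the set of points of $A$ witnessed only through removed nodes lies in $\mathcal{M}_-$. So if the pruned tree $S'$ is empty, then $A\in\mathcal{M}_-$. Otherwise every node of $S'$ is positive.

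\textbf{Step 2 (positivity gives splitting).} This is the key combinatorial lemma, and I expect it to be the main obstacle. I need to show that if $B=A_{(\sigma,s)}\notin\mathcal{M}_-$, then there is $M>|\sigma|$ such that for every interval $I\subseteq\omega\setminus M$ and every $\rho\in\omega^I$ some extension in $S'$ has first coordinate $\tau\supseteq\sigma$ with $\tau\upharpoonright I=\rho$. I will argue by contraposition.

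\begin{itemize}
\item Suppose that for every $M$ there is an interval $I_M\subseteq\omega\setminus M$ and a pattern $\rho_M\in\omega^{I_M}$ realized by no point of $B$ extending $\sigma$.
\item I pick these intervals inductively so that they are pairwise disjoint and increasing, completing them to an interval partition $\bar{J}$ in which the $I_M$ occur as infinitely many blocks.
\item I then choose $x$ agreeing with $\rho_M$ on each $I_M$ and $0$ elsewhere, and $\bar J'$ merging each non-$I_M$ gap into the following $I_M$ block.
\item Every $y\in B$ then differs from $x$ on each block, so $B\subseteq M(x,\bar{J}')$.
\end{itemize}

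The subtlety is that $B$ only fails to realize $\rho_M$ on $I_M$ absolutely, while the block of $\bar J'$ is larger. Differing on the $I_M$ part already forces differing on the whole block, so this is fine. This contradicts positivity.

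\textbf{Step 3 (building $T$).} Using Step 2 at each node of $S'$, I build a fusion tree $T$. By induction I attach to each relevant $\tau$ a node $(\tau,t)\in S'$. For each such node I use the witness $M$ given by Step 2 and add, for every interval $I$ beyond $M$ and every $\rho$, a branch node $\tau'$ realizing $\rho$ on $I$, carried by some $(\tau',t')\in S'$. Since all these nodes are again positive, the construction iterates. Taking $T$ to be the downward closure of the first coordinates produced, each $\sigma\in T$ satisfies the defining condition of $\mathbb{T}_{\mathcal{M}_-}$. Here one must check that the condition only requires some extension in $T\upharpoonright\sigma$, and that every node of $T$ lies below some constructed node, so the witness $M$ of that node works. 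Every branch of $T$ also comes with a branch through $S'$ (by a König or fusion argument, since the second coordinates were chosen coherently along extensions), so $[T]\subseteq p[S]=A$.

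\textbf{Step 4 (consequences).} The dense embedding claim follows from the facts already noted:
\begin{itemize}
\item $[T]\notin\mathcal{M}_-$ for every $T\in\mathbb{T}_{\mathcal{M}_-}$;
\item $[T]$ is closed, hence Borel;
\item $\mathbb{T}_{\mathcal{M}_-}$ is closed under the restrictions $T\upharpoonright\sigma$.
\end{itemize}
Given any $\mathcal{M}_-$-positive Borel set $B$, the dichotomy yields $T$ with $[T]\subseteq B$. Order and incompatibility are preserved: if $[T]\cap[T']$ were positive, it would contain some $[T'']$. This gives forcing equivalence.
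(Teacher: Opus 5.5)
Your Steps 1--2 are a sound repackaging of the paper's argument. Instead of iterating the derivative to a fixed point $T^{(\alpha_0)}$, you discard all nodes with $\mathcal{M}_-$-small projection in one step. This is legitimate because $\mathcal{M}_-$ is a $\sigma$-ideal, and the resulting $S'$ is pruned since a positive node has a positive child. Your contrapositive in Step 2 is exactly the paper's Case 1 construction of $x$ and $\bar J$. One correction there: the negation of the splitting condition in $S'$ says that no point of $p[S'\upharpoonright(\sigma,s)]$ realizes $\rho_M$ on $I_M$, not that no point of $B=p[S\upharpoonright(\sigma,s)]$ does. So what you obtain is $p[S'\upharpoonright(\sigma,s)]\in\mathcal{M}_-$, and then $B\in\mathcal{M}_-$ because $B\setminus p[S'\upharpoonright(\sigma,s)]$ is covered by countably many pruned null pieces.

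The real gap is Step~3, not Step~2. Suppose that for every interval $I$ beyond $M$ and every $\rho$ you pick an arbitrary $S'$-node $(\tau',t')$ realizing $\rho$ on $I$ and take the downward closure. Then infinitely many chosen nodes can accumulate at a branch $y\in[T]$ that is not the union of a chain of chosen nodes. Coherence of second coordinates along chains says nothing about such $y$, and K\"onig's lemma is unavailable because $T$ is infinitely branching.

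For a concrete failure, take $A=\{y:\exists n\ y(n)\neq 0\}=p[S]$, where $(\sigma,s)\in S$ iff $|s|=|\sigma|$, $s$ is empty or constant with some value $c\geq 1$, and $\sigma(c-1)\neq 0$ whenever $c-1<|\sigma|$. Every node has a nonempty open, hence positive, projection, so $S'=S$. For the requirement $I=\{q\}$, $\rho(q)=0$, the choice $\tau'=0^{q+1}{}^\frown 1$ with $t'$ constant $q+2$ is allowed. With these choices every $0^n$ lies in $T$, so the zero sequence is in $[T]\setminus A$.

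You need a device that forbids new limit branches, for instance a tag at a fixed coordinate:
\begin{itemize}
\item At a constructed node with $S'$-witness $(\tau,t)$ and Step-2 bound $M$, enumerate as $(I_j,\rho_j)_{j<\omega}$ all intervals $I_j\subseteq\omega\setminus(M+1)$ with patterns $\rho_j\in\omega^{I_j}$.
\item Apply the splitting property to the interval $[M,\max I_j]$ with the pattern that is $j$ at $M$, $\rho_j$ on $I_j$, and $0$ in between. Truncate to length $\max I_j+1$ to get the successor $(\tau_j,t_j)$.
\item Successors of a node are pairwise incompatible at coordinate $M$, so each level of constructed nodes is an antichain.
\item If $y\in[T]$ passes through a constructed $\tau$, every constructed node extending $y\upharpoonright(M+1)$ extends $\tau_{y(M)}$, whence $\tau_{y(M)}\subseteq y$.
\end{itemize}
Thus every branch of $T$ is the union of a chain of constructed nodes, the $t$'s cohere, and $[T]\subseteq p[S']\subseteq A$. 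Moreover $M+1$ witnesses the $\mathbb{T}_{\mathcal{M}_-}$ condition at every $\sigma\subseteq\tau$.

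The paper's own Case 2 passes over this same point. $[\pi[T^{(\alpha_0)}]]$ is only the closure of $p[T^{(\alpha_0)}]$, and in the example above it is all of $\omega^\omega$. You correctly sensed that a coherent fusion is needed, but the construction itself has to force it.
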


\begin{proof}
     Let $T $ be a tree on $\omega \times \omega$ such that $A=\pi[T]$ (here $\pi$ denote the projection on to the first coordinate). For any tree $S$ on $\omega \times \omega$ define the derivative:
     $$S'=\{(\sigma,\tau)\in S:(\exists M>|\sigma|) (\forall I\subseteq \omega\backslash M)(\forall \rho \in \omega^I)(\exists (\sigma',\tau')\in T\upharpoonright_{(\sigma,\tau)}) (\sigma'\upharpoonright I =\rho)\}.$$

Next define inductively:
\begin{itemize}
    \item $T^{(0)}:=T, $
    \item $T^{(\alpha+1)}:=\left(T^{(\alpha)}\right)^{\prime}$,
    \item $T^{(\lambda)}=\bigcap_{\alpha<\lambda} T^{(\alpha)}$.
\end{itemize}

Let $\alpha_0<\omega_1$ be the least such that $T^{(\alpha_0)}=T^{(\alpha_0+1)}$. Consider the following two cases:

Case $1)$  $T^{(\alpha_0)}=\emptyset$.

Note that $[\pi (T)]\subseteq \bigcup_{\beta < \alpha_0} [\pi [T^\beta \backslash T^{\beta +1}]].$ It will be enough to show that $\forall \beta<\alpha_0$ we have $[\pi[T^\beta\setminus T^{\beta+1}]]\in \mathcal{ M}_-.$ Fix $\beta $ and $(\sigma, \tau)\in T^\beta \backslash T^{\beta+1}.$ We will show that $[\pi [T^\beta\upharpoonright_{(\sigma, \tau)}]]\in \mathcal{M}_-$.

Since $(\sigma, \tau )\notin T^{\beta+1}$ we get:
$$\forall  M >|\sigma|=|\tau|\;\; \exists I\subseteq \omega\backslash M \;\; \exists \rho \in \omega^I \;\; \forall (\sigma',\tau')\in T^\beta \upharpoonright_{(\sigma, \tau)} \;\; \sigma' \upharpoonright_I \neq \rho .\;\; \;\; (*)$$

 We will inductively build $x\in \omega^\omega$ and $J\in \mathbb{IP}$ such that $[\pi[T^\beta \upharpoonright_{(\sigma ,\tau )}]]\subseteq M(x,J).$ 
Assume that $J_n$ and $x\upharpoonright J_n$ are constructed. By $(*)$ there is $I_n\subseteq \omega \backslash max(J_n)$ and $\rho_n \in \omega^{I_n}$ such that:
\begin{equation}
\forall (\sigma',\tau') \in T^\beta \upharpoonright _{(\sigma,\tau)} \;\; \sigma'\upharpoonright I_n \neq \rho_n 
\tag{**}
\end{equation}

Let  $J_{n+1}=[max(J_n)+1, max (I_n)]$ and  let $x\upharpoonright  J_{n=1}$ be such that $x\upharpoonright I_n =\rho_n$. 

\begin{claim}
    $[\pi [T^\beta \upharpoonright _{(\sigma, \tau)}]] \subseteq M(x,\bar{J})$.
\end{claim}
\begin{proof}
    Take any $y\in [\pi [T^\beta \upharpoonright _{(\sigma, \tau)}]]$. There exists $(y,z)$ such that $(y,z)\in T^\beta \upharpoonright_{(\sigma,\tau)}$. So by $(**)$  for every $n$  we have $y\upharpoonright I_n \neq \rho$. Since for every $J_n $   $I_n\subseteq J_n$ we have $\forall^\infty \;\; y\upharpoonright J_n \neq x\upharpoonright J_n$, because $x\upharpoonright I_n =\rho_n.$ Thus we have     $[\pi [T^\beta \upharpoonright _{(\sigma, \tau)}]] \subseteq M(x,\bar{J})$.
\end{proof}

Case $2)$  $T^{(\alpha_0)}\neq \emptyset$.

We will show that $\pi [T^{\alpha_0}] \in \mathbb{T}_{\mathcal{M}_-} $. Take any $\sigma \in \pi [T^{\alpha_0}]$. So there exists $\tau $ such that $(\sigma, \tau) \in T^{\alpha_0}.$ Then we can find $M> |\sigma |=|\tau|$ such that for every interval $I\subseteq \omega \backslash M$ and for every $\rho \in \omega ^I$ we can find $(\sigma',\tau ')\in T^{\alpha_0}$ such that $(\sigma,\tau )\subseteq (\sigma', \tau ')$ and $\sigma' \upharpoonright I=\rho.$ It follows that $\sigma\subseteq\sigma' \in \pi [T^{\alpha_0}]$ and $\sigma' \upharpoonright I =\rho.$    
\end{proof}

It follows from \cite{zapletal} that $\mathbb{T}_{\mathcal{M}_-}$ forcing is proper as $\mathcal{M}_-$ has an $F_{\sigma}$- basis.

Now we move to some corollaries about forcing with $\mathbb{T}_{\mathcal{M}_-}$.

\begin{df}
 A real $x$ is called \emph{infinitely equal}  over a model $M$, if and only if  $\forall y\in \omega^\omega  \cap M\;\;  \exists^\infty n \; \; x(n)=y(n)$
\end{df}

\begin{thm}
    $\mathbb{T}_{\mathcal{M}_-}$ adds infinitely equal real.
\end{thm}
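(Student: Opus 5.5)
The generic real $x_G=\bigcup\{\mathrm{stem}(T):T\in G\}$ should be infinitely equal over the ground model $V$. I plan a density argument in $\mathbb{T}_{\mathcal{M}_-}$, using only the defining property of these trees. Fix $y\in\omega^\omega\cap V$ and $N\in\omega$. It suffices to show that the set
$$D_{y,N}=\{S\in\mathbb{T}_{\mathcal{M}_-}:\ \exists n>N\ \forall \tau\in S\ (|\tau|>n\Rightarrow \tau(n)=y(n))\}$$
is dense. Genericity then gives, for every $N$, some $n>N$ with $x_G(n)=y(n)$. Hence $\exists^\infty n\; x_G(n)=y(n)$ for every ground model $y$.

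For density, take $T\in\mathbb{T}_{\mathcal{M}_-}$ and let $\sigma$ be its stem (or any node). The definition of $\mathbb{T}_{\mathcal{M}_-}$ gives $M>|\sigma|$ such that every interval $I\subseteq\omega\setminus M$ and every $\rho\in\omega^I$ are realized by some $\tau\in T\upharpoonright\sigma$. Choose $n>\max(M,N)$ and apply this with the singleton interval $I=\{n\}$ and $\rho(n)=y(n)$. We obtain $\tau\in T$ extending $\sigma$ with $\tau(n)=y(n)$, and we may take $|\tau|>n$. Let $S=T\upharpoonright\tau$, the set of nodes of $T$ comparable with $\tau$. Every branch of $S$ passes through $\tau$, so $S\subseteq T$ and $S$ has the required property at $n$.

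The remaining step, and the main obstacle, is checking that $S\in\mathbb{T}_{\mathcal{M}_-}$. Nodes of $S$ of length below $|\tau|$ are initial segments of $\tau$, so the condition must be verified at nodes $\sigma'\supseteq\tau$. For those, $S\upharpoonright\sigma'=T\upharpoonright\sigma'$, and the condition transfers directly from $T$. For a node $\sigma'\subsetneq\tau$, I would take the witness $M'=|\tau|$. The issue is then whether extensions of $\tau$ in $T$ realize every pattern on intervals beyond $|\tau|$. This follows from applying the defining property of $T$ at the node $\tau$ itself, which yields some $M''>|\tau|$. The witness for $\sigma'$ should therefore be $M'=M''$, since the definition only asks for the existence of some $M>|\sigma'|$. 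Thus $S$ is a condition below $T$ in $D_{y,N}$, which proves density.

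Alternatively, one can derive the theorem from the dichotomy: the set $\{x:\forall^\infty n\;x(n)\neq y(n)\}=E_y$ lies in $\mathcal{ED}\subseteq\mathcal{M}_-$. Since $E_y$ is a Borel set coded in $V$ and in the ideal, the generic real (generic for $Bor/\mathcal{M}_-$) avoids it. I would mention this as the slicker route, since it only uses the forcing equivalence established in the previous theorem.
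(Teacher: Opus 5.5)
Your proposal is correct. Your main argument takes a different route from the paper's; your closing ``alternative'' is essentially the paper's proof.

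The paper argues in one line via the tree dichotomy. Since $E_f\in\mathcal{ED}\subseteq\mathcal{M}_-$, for any condition $T$ the Borel set $[T]\setminus E_f$ is $\mathcal{M}_-$-positive. The dichotomy theorem then yields $S\le T$ with $[S]\cap E_f=\emptyset$, which forces $\exists^\infty n\ \dot{r}_{gen}(n)=f(n)$.

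Your primary argument works directly from the definition of $\mathbb{T}_{\mathcal{M}_-}$. You realize the value $y(n)$ on a singleton interval $\{n\}$ beyond the witness $M$, then pass to $T\upharpoonright\tau$. To check this restriction is still a condition at nodes $\sigma'\subsetneq\tau$, you borrow the witness $M''$ belonging to $\tau$. You were right to abandon $M'=|\tau|$: it fails whenever the extensions of $\tau$ in $T$ do not realize every value at coordinate $|\tau|$.

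What your route buys is self-containment. It needs neither the derivative/rank argument behind the dichotomy, nor the inclusion $\mathcal{ED}\subseteq\mathcal{M}_-$, nor the paper's unproved remark that $[T]\notin\mathcal{M}_-$ for $T\in\mathbb{T}_{\mathcal{M}_-}$. The paper's route silently needs that remark to know $[T]\setminus E_f$ is positive. Moreover, the closure of $\mathbb{T}_{\mathcal{M}_-}$ under restriction to nodes, which you verify, is exactly what one needs anyway to see that $x_G$ is a well-defined total real.

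The paper's route is shorter once the dichotomy is available, and it gives more at once. The generic real avoids every ground-coded Borel set in $\mathcal{M}_-$; for instance, it agrees with any ground-model $x$ on infinitely many whole blocks of any ground-model partition $\bar I$. Your density argument extends to that case as well: use $I=I_k$ (for $k$ large) and $\rho=x\upharpoonright I_k$ in place of a singleton.
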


\begin{proof}
For any $f\in \omega^\omega$, by prop.. we have that $E_f\subseteq\mathcal{M}_-$. So there is $S\leqslant T$ such that $[S]\cap E_f=\emptyset$. In particular $S\forces {"} \exists^{\infty}_{n}$ $  \dot{r}_{gen}(n)=f(n)"$    
\end{proof}

\begin{fact} (Folklore)
    If $\mathbb{P}$ adds an infinitely equal real and $\mathbb{P} \forces ^" \dot{\mathbb{Q}} $ adds infinitely equal real$^"$ then $\mathbb{P}*\dot{\mathbb{Q}}$ adds a Cohen real. 
\end{fact}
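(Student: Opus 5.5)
The plan is to use the standard characterization: $c$ is Cohen over a model $V$ iff $c$ avoids every meager set coded in $V$. By Talagrand's characterization (Theorem \ref{XD}, transferred to $\omega^\omega$ via a coding of $\omega$ into $2$-valued blocks, or directly in $\omega^\omega$), this amounts to the following. For every $x\in 2^\omega\cap V$ and every $\bar I\in\mathbb{IP}\cap V$ there are infinitely many $n$ with $c\upharpoonright I_n=x\upharpoonright I_n$. Let $r$ be the $\mathbb{P}$-generic infinitely equal real over $V$, and let $s\in V[r][s]$ be infinitely equal over $V[r]$. I will build $c$ from $r$ and $s$ together by a two-stage decoding.

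First I fix in $V$ a bijection $e:\omega\to\omega^{<\omega}$; below we also view $e$ as coding $2^{<\omega}$. The first real $r$ handles the "guess the partition" part. Given $\bar I\in V$, the function $n\mapsto$ (a code for the sequence of $\bar I$ up to $n$) lies in $V$, so $r$ agrees with it infinitely often. I would rather exploit this in the other order. The second real $s$ is used to guess finite strings, and $r$ tells us where to read them.

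Concretely, define the partition $\bar J$ in $V[r]$ by $J_k=[j_k,j_{k+1})$, where $j_{k+1}=j_k+r(k)+1$, i.e. block lengths are read off $r$. Then define $c\upharpoonright J_k$ to be the string $e(s(k))$ truncated or padded to length $|J_k|$.

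The verification runs as follows. Fix $x,\bar I\in V$. Since $V\subseteq V[r]$, the function $g(k)=e^{-1}(x\upharpoonright J_k)$ lies in $V[r]$, so $s(k)=g(k)$ for infinitely many $k$, giving $c\upharpoonright J_k=x\upharpoonright J_k$ infinitely often. The problem is that we need agreement on a whole $I_n$, not merely on some $J_k$. This is where $r$ must make $\bar J$ coarse enough relative to every ground-model $\bar I$: we need infinitely many $k$ such that some $I_n\subseteq J_k$, and moreover among those $k$ the agreement must happen.

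To secure this, I will use $r$ not for lengths directly but via $V$-coded data. Let $j_{k+1}$ be the least $m>j_k$ with $m\ge h_{r(k)}(j_k)$, where $(h_i)$ enumerates $\omega^\omega\cap V$. That fails, since $V$ is large. Instead, apply infinite equality to the $V$-function $k\mapsto$ a code of $\bar I$'s next-interval function restricted to the finite set below a $V$-bound. Simplest: let $J_k$ be the block $[j_k, j_k + r(k))$ shifted so that, whenever $r(k)$ equals $f_{\bar I}(k)$ with $f_{\bar I}\in V$ chosen so that $[j,j+f_{\bar I}(k))$ contains an $I_n$ for all $j\le$ a $V$-computable bound, the block covers some $I_n$. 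The main obstacle is that $j_k$ depends on $r$, so we need a $V$-bound on $j_k$. I would fix $j_k$ in $V$ in advance by interleaving: use the even coordinates of $r$ for lengths with domain-fixed start points. Alternatively, invoke the known fact that $\mathbb{P}$ adding an infinitely equal real makes $V\cap\omega^\omega$ meager iff ... and instead reduce to Bartoszyński's theorem that $\mathrm{non}(\mathcal M)$ is witnessed by infinitely equal families. The resolution step is to find the correct $V$-bounded bookkeeping so that the "infinitely often" sets for $r$ and for $s$ intersect. I would try to make $s$'s guess depend on $r$'s guess (namely $g$ defined using $r$), so that a single $s$-hit automatically occurs at a coordinate where $r$ is also correct.
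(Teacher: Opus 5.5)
The paper states this fact as folklore and gives no proof, so the only question is whether your sketch stands on its own. It does not: it stops exactly at the step that carries the content.

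The parts you do carry out are fine. The partition $j_{k+1}=j_k+r(k)+1$ already has, for every $\bar I\in V$, infinitely many blocks $J_k$ containing some $I_n$, and no $V$-bound on $j_k$ is needed. Suppose that from some $k_0$ on no block contained an $I_n$. A block starting in $I_m$ would then end in $I_m\cup I_{m+1}$, so $j_k$ would lie in some $I_m$ with $m\le k+C$. Now take any large $k$ with $r(k)=\min I_{2k}$; infinite equality provides one. The block $J_k$ then contains $I_{m+1}$, a contradiction.

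Decoding $c\upharpoonright J_k$ from $s(k)$ also gives $c\upharpoonright J_k=x\upharpoonright J_k$ for infinitely many $k$. But that only says $c\notin M(x,\bar J)$. You need infinitely many of these agreements at indices in $A=\{k:\exists n\ I_n\subseteq J_k\}\in V[r]$, and infinite equality of $s$ over $V[r]$ gives no control over where agreements occur.

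In fact your construction fails outright, whatever guessed function you use.
\begin{itemize}
\item Take $\mathbb{P}$ and $\mathbb{Q}$ both Cohen. Then $\bar J$ is not dominating, so some $\bar I\in V$ has $A$ co-infinite.
\item Choose an infinite $D\subseteq\omega\setminus A$ in $V[r]$ with no two consecutive elements.
\item Let $s'$ agree with the Cohen real on $D$, and for $k\notin D$ let $s'(k)$ code the bitwise complement of $x\upharpoonright J_k$.
\end{itemize}
Then $s'$ is still infinitely equal over $V[r]$, because a Cohen real over $V[r]$ agrees with every $h\in V[r]$ infinitely often inside $D$. On the other hand, every $I_n$ either lies in a block with index in $A$, or meets two consecutive blocks, at most one of which has index in $D$. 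Either way $I_n$ meets a block on which the resulting $c'$ differs from $x$ everywhere. Hence $c'\in M(x,\bar I)$, so $c'$ is not Cohen over $V$. Making $g$ ``depend on $r$'s guess'' therefore cannot help; the construction itself has to change.

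If $V[r]$ contained a dominating partition, so that almost all $J_k$ are good, your argument would go through verbatim. Infinite equality only gives you an unbounded partition, and bridging that gap is exactly the missing idea.

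Your other suggestions do not supply it. If block positions are read off earlier values of $s$, or off values of $r$ at places chosen by $s$, then the starting point of the stage-$k$ block is not bounded by any function in $V[r]$, so a single guess $g(k)$ cannot anticipate it. Capping those positions by a function in $V[r]$ brings back the need for domination.

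A smaller point: the criterion ``$\exists^\infty n\ c\upharpoonright I_n=x\upharpoonright I_n$ for all ground-model $x,\bar I$'' characterizes Cohen reals only in $2^\omega$. Applied directly in $\omega^\omega$ it only says that $c$ avoids the ground-coded $\mathcal{M}_-$ sets, and there $\mathcal{M}_-\subsetneq\mathcal{M}$. That is why the paper can ask whether $\mathbb{T}_{\mathcal{M}_-}$ adds a Cohen real. Your sketch does end up in $2^\omega$, so drop the ``directly in $\omega^\omega$'' option.
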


\begin{cor}
    If $\mathbb{P}^{\omega_2} $ is a countable support iteration  of  length $\omega_2$ over a model of  $CH$ with $\mathbb{T}_{\mathcal{M}_-}$ as each iterand, then $\mathbb{V}^{\mathbb{P}_{\omega_2}} \forces (cov(\mathcal{M})=\omega_2=\mathfrak{c})$.
\end{cor}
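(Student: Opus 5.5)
We need $cov(\mathcal{M})=\omega_2$ in the extension; $\mathfrak{c}=\omega_2$ is standard. A countable support iteration of proper forcings of length $\omega_2$ over a model of $CH$, with each iterand of size $\mathfrak{c}$, satisfies the $\aleph_2$-chain condition and forces $\mathfrak{c}\le\omega_2$. Each $\mathbb{T}_{\mathcal{M}_-}$ is proper, by the remark following the tree dichotomy (via \cite{zapletal}, since $\mathcal{M}_-$ has an $F_\sigma$ basis). Moreover, each iterand adds a new real, namely its generic real, which lies outside every ground-model code of a set in $\mathcal{M}_-$. Hence $\mathfrak{c}\ge\omega_2$, and so $\mathfrak{c}=\omega_2$.

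For $cov(\mathcal{M})\ge\omega_2$ we show that Cohen reals are added cofinally often. Fix $\alpha<\omega_2$. In $V^{\mathbb{P}_\alpha}$, the forcing $\mathbb{T}_{\mathcal{M}_-}$ adds an infinitely equal real over $V^{\mathbb{P}_\alpha}$, by the theorem above. In $V^{\mathbb{P}_{\alpha+1}}$, the next iterand again adds an infinitely equal real over $V^{\mathbb{P}_{\alpha+1}}$. By the Folklore Fact applied to the two-step iteration $\mathbb{T}_{\mathcal{M}_-}*\dot{\mathbb{T}}_{\mathcal{M}_-}$ over $V^{\mathbb{P}_\alpha}$, the model $V^{\mathbb{P}_{\alpha+2}}$ contains a Cohen real over $V^{\mathbb{P}_\alpha}$.

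Now let $\{B_\xi:\xi<\omega_1\}$ be a family of Borel meager sets in the final model $V^{\mathbb{P}_{\omega_2}}$; any family of $\omega_1$ meager sets is covered by such Borel ones. Each code is a real, and by properness and the $\aleph_2$-c.c., every real appears at some stage $<\omega_2$. Since $\omega_2$ is regular, all $\omega_1$ codes lie in some $V^{\mathbb{P}_\alpha}$ with $\alpha<\omega_2$. The Cohen real over $V^{\mathbb{P}_\alpha}$ added by stage $\alpha+2$ avoids every meager Borel set coded in $V^{\mathbb{P}_\alpha}$, so it avoids every $B_\xi$. Thus no $\omega_1$ meager sets cover $\omega^\omega$, giving $cov(\mathcal{M})\ge\omega_2=\mathfrak{c}$. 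The reverse inequality $cov(\mathcal{M})\le\mathfrak{c}$ is trivial.

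The main obstacle is the bookkeeping showing that every real, and in particular every countable family of codes, appears at an initial stage of the iteration. This is the standard reflection argument for countable support proper iterations of length $\omega_2$ with iterands of size $\aleph_1$ under $CH$ in intermediate models. I would cite it rather than reprove it. It needs $CH$ to hold in each $V^{\mathbb{P}_\alpha}$ for $\alpha<\omega_2$, which holds because the iterands are proper and have size $\mathfrak{c}$, so they are $\aleph_2$-c.c.
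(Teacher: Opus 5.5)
Your proposal is correct and follows the paper's route. The paper derives the corollary from two ingredients: $\mathbb{T}_{\mathcal{M}_-}$ adds an infinitely equal real, and the Folklore Fact then yields a Cohen real over $V^{\mathbb{P}_\alpha}$ by stage $\alpha+2$; the usual reflection argument for countable support proper iterations then gives $cov(\mathcal{M})=\omega_2=\mathfrak{c}$, which is exactly your argument. One justification needs tidying: $CH$ at intermediate stages holds because each $\mathbb{P}_\alpha$ with $\alpha<\omega_2$ has a dense subset of size $\aleph_1$ (by induction, using $CH$ in the earlier $V^{\mathbb{P}_\beta}$), not because the individual iterands satisfy the $\aleph_2$-chain condition.
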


\begin{quest}
    Does for any $T\in \mathcal{M}_-$, $\mathbb{T}_{\mathcal{M}_-} \upharpoonright T$ adds a Cohen real?
\end{quest}

\begin{thm}
    $\mathbb{T}_{\mathcal{M}_-}$ does not have $c.c.c$
    
\end{thm}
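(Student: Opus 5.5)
We will exhibit an antichain in $\mathbb{T}_{\mathcal{M}_-}$ of size $\mathfrak{c}$. By the dense embedding from the tree dichotomy theorem, it suffices to find $\mathfrak{c}$ many Borel sets, each outside $\mathcal{M}_-$, whose pairwise intersections lie in $\mathcal{M}_-$. The dichotomy then gives trees $T_\alpha$ with $[T_\alpha]$ contained in these sets. Any common extension $S$ of $T_\alpha$ and $T_\beta$ would satisfy $[S]\subseteq [T_\alpha]\cap[T_\beta]\in\mathcal{M}_-$. This contradicts the fact, noted after the definition of $\mathbb{T}_{\mathcal{M}_-}$, that $[S]\notin\mathcal{M}_-$.

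\textbf{Construction.} Fix an almost disjoint family $\{A_\alpha:\alpha<\mathfrak{c}\}$ of infinite subsets of $\omega$, and let
$$B_\alpha=\{x\in\omega^\omega:\ \forall n\notin A_\alpha\;\; x(n)=0\}.$$
Each $B_\alpha$ is closed. For $\alpha\neq\beta$ we have $A_\alpha\cap A_\beta$ finite, so every $x\in B_\alpha\cap B_\beta$ is eventually $0$. Hence $B_\alpha\cap B_\beta$ is countable, and it lies in $\mathcal{M}_-$, since singletons are in $\mathcal{M}_-$ ($\{y\}\subseteq M(y+1,\bar I)$).

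\textbf{Main obstacle: showing $B_\alpha\notin\mathcal{M}_-$.} As written, this fails. Taking $M(x,\bar I)$ with $x\restriction I_n\neq 0$ on every interval covers $B_\alpha$ whenever some $n\notin A_\alpha$ lies in each $I_n$. So I would enlarge the sets: let
$$B_\alpha=\{x:\ \forall n\notin A_\alpha\;\; x(n)=0\}\cup\{\text{$x$ not eventually $0$ off } A_\alpha\}?$$
That breaks disjointness. The better repair is to code with blocks. Partition $\omega$ into intervals $K_j$ with $|K_j|\to\infty$, and for an infinite $A\subseteq\omega$ let
$$C_A=\{x:\ \forall j\ \forall m\in K_j\;\; x(m)\neq 0\iff j\in A\}.$$
Then $C_A\cap C_{A'}$ consists of points determined up to finitely many blocks. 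This is still not obviously small.

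So I would instead aim for a direct tree construction. Fix a bijective coding and build, for each $f\in 2^\omega$, a tree $T_f$ in which nodes above level $M$ are free on every interval; this makes $T_f\in\mathbb{T}_{\mathcal{M}_-}$. In addition, the value of $x$ at a sparse set of coordinates $d_k$ (with $d_k$ growing fast) encodes $f(k)$, via parity of $x(d_k)$, say. The trouble is that the freeness condition demands every $\rho\in\omega^I$ on every late interval $I$, and any $I$ containing $d_k$ then forces both parities to appear.

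This suggests the key step I expect to be hardest: choose an encoding that is preserved under all finite patterns. The one to try first is encoding $f$ in the \emph{branching}. That is, require that $T_f$ realizes all patterns on $I$ only above extensions $\tau$ whose restriction to an earlier, already fixed window encodes $f\restriction k$. Then $[T_f]\cap[T_g]$ would sit inside the set of $x$ avoiding some fixed pattern infinitely often along a partition, i.e.\ inside some $M(z,\bar J)$, giving incompatibility. Verifying that this intersection indeed lands in a single generator $M(z,\bar J)$ is the crux.
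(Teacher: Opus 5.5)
\textbf{There is a genuine gap: the proposal never produces the antichain.} Your reduction is sound. Continuum many $\mathcal{M}_-$-positive Borel sets with pairwise intersections in $\mathcal{M}_-$ do give an antichain, via the dichotomy and the fact that $[S]\notin\mathcal{M}_-$ for every condition $S$. But constructing such a family is the whole content of the theorem, and none of your candidates works.
\begin{itemize}
\item The sets $B_\alpha$ fail, as you note.
\item The block sets $C_A$ fail at positivity, not only at smallness. They forbid a fixed value at a fixed coordinate in infinitely many blocks. Any such set is covered by a single $M(z,\bar J)$: take intervals each containing one such coordinate, with $z$ equal to the forbidden value there.
\item The parity coding fails, as you note.
\item The final \emph{branching} idea cannot work as stated. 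Suppose $\sigma\in T\in\mathbb{T}_{\mathcal{M}_-}$ with witness $M$. Every finite set of coordinates above $M$ lies in one interval above $M$, so $T\upharpoonright\sigma$ realizes every joint pattern on any earlier window $W$ and later window $I$ once both lie above $M$. So no window above $M$ can be constrained, let alone correlated with later windows. Any coding must live in $[|\sigma|,M)$ and be relative to the node $\sigma$.
\item Your description of the target set is also off. The set of $x$ that \emph{avoid a fixed pattern infinitely often} along a partition is comeager. Membership in $M(z,\bar J)$ requires avoidance for almost all $n$.
\end{itemize}
The step you call the crux is left unverified. The paper's written argument does not supply it either. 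It codes $x$ by fixing every second level of $T_x$ to $x(n+1)\in\{0,1\}$, which is exactly the static coding you rejected. Those trees never take the value $2$ on infinitely many fixed levels, so $[T_x]\in\mathcal{M}_-$ and $T_x\notin\mathbb{T}_{\mathcal{M}_-}$.

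\textbf{The missing idea and a construction that works.} The code must depend on the whole history and be correctable from the single coordinate right after any node, using a separate binary digit for each later position. Write $w[k]$ for the $k$-th binary digit of $w\in\omega$. For $c\in 2^\omega$ with $c(0)=0$, let $T_c$ be the tree of
\[
X_c=\Bigl\{x\in\omega^\omega:\ \forall j\geq 1\ \ x(j)[0]\equiv c(j)+\sum_{i<j}x(i)[j] \pmod 2\Bigr\}.
\]
\emph{Incompatibility.} If $c(j)\neq c'(j)$, no node of length $j+1$ lies in $T_c\cap T_{c'}$. So $T_c$ and $T_{c'}$ are incompatible; their bodies are even disjoint, as the paper intended.

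\emph{Each $T_c$ is a condition, with witness $M=|\sigma|+1$.} Let $\sigma\in T_c$ have length $\ell$, let $I=[a,b]$ with $a>\ell$, and let $\rho\in\omega^I$. Build $\tau$ as follows.
\begin{itemize}
\item Put $\tau\upharpoonright I=\rho$.
\item Choose $w=\tau(\ell)$ with $w[0]$ given by the constraint at $\ell$ (if $\ell\geq 1$), and $w[k]=0$ for $\ell<k<a$.
\item Fill the coordinates in $(\ell,a)$, in increasing order, with values in $\{0,1\}$ that satisfy their constraints.
\item Finally, for each $j\in I$, choose the digit $w[j]$ to satisfy the constraint at $j$. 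It is the only undetermined term there, because the values $0,1$ have no nonzero digits at positions $\geq 1$.
\end{itemize}
Then $\tau\in T_c$, $\sigma\subseteq\tau$ and $\tau\upharpoonright I=\rho$. This gives an antichain of size $\mathfrak{c}$.
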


\begin{proof}
    For every $x\in 2^\omega$ we create $T_x\in\mathbb{T}_{\mathcal{M}_-}.$ Enumerate $\omega^{<\omega}$ as $\{\sigma_n:n<\omega\}$. Let $T_x=\bigcup_n F_n^x$ where $(F_n^x:n<\omega)$ is build inductively as follows:
    \begin{enumerate}
        \item $F_n^x\subseteq F^x_{n+1}$,
        \item $F_n$- well founded.
        
    \end{enumerate}
    If $F_n^x$ is done let: $$F^x_{n+1}=\bigcup_{\tau\in term(F_n^x)}\{\tau^\frown n^\frown x(n+1):n<\omega \}$$
Of course  $\forall x\neq y$ we have $[T_x]\cap [T_y]=\emptyset$
\end{proof}

\begin{fact}
    If $\mathcal{I}$ has an $F_\sigma $- base, then the idealized forcing notion $Bor/{\mathcal{I}}$ preserves Baire Category examined in \cite{zapletal}.
\end{fact}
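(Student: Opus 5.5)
The statement is a Fact attributed to Zapletal: if $\mathcal{I}$ is a $\sigma$-ideal with an $F_\sigma$ (i.e. $\sigma$-generated by closed sets) base, then the idealized forcing $Bor/\mathcal{I}$ preserves Baire category, meaning every nonmeager set of ground model reals remains nonmeager in the extension. In our setting it will be applied to $\mathcal{I}=\mathcal{M}_-$, which has such a base because each generator $M(x,\bar I)=\bigcup_k\{y:\forall n\ge k\; y\upharpoonright I_n\neq x\upharpoonright I_n\}$ is a countable union of closed sets.

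\textbf{Properness.} The plan is to follow the standard route through Zapletal's theory. First, by the general theorem for $\sigma$-ideals $\sigma$-generated by closed sets, $Bor/\mathcal{I}$ is proper. This is proved via a game: Player I plays a condition and Player II tries to build a countable elementary submodel's master condition. The required ingredient is that for a Borel $B\notin\mathcal{I}$ and a countable family of maximal antichains there is a compact/closed $\mathcal{I}$-positive subset of generic reals, which follows because $\mathcal{I}$ is generated by closed sets. For $\mathcal{M}_-$ this is concretely witnessed by the tree representation $\mathbb{T}_{\mathcal{M}_-}$ from the preceding dichotomy theorem.

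\textbf{Preservation of category.} Fix a condition $B$ and a name $\dot D$ for a dense open set coded in the extension; by properness we may assume $\dot D$ is given by a Borel function $f:B\to$ codes. We want $C\subseteq B$ positive and a ground model comeager set $G$ such that $C\Vdash G\cap \check V\subseteq \dot D$ fails to be avoided, i.e. no ground model nonmeager set becomes disjoint from $\dot D$'s complement. The approach is to use the Kuratowski--Ulam style argument. Consider $R=\{(x,y)\in B\times\omega^\omega: y\notin f(x)\}$, whose vertical sections are meager. Since $\mathcal{I}$ has an $F_\sigma$ base, Zapletal's "$\mathcal{I}$ is Fubini-compatible with $\mathcal{M}$" lemma applies. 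For each $y$, either the horizontal section $\{x: y\notin f(x)\}$ is in $\mathcal{I}$, or it contains a positive closed set. We then show the set of $y$ for which the horizontal section is $\mathcal{I}$-positive below every $C$ is meager. This is done by a fusion of closed sets: we shrink $B$ to a closed positive set on which $f$ is continuous, then use Baire category in the space of $y$'s.

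\textbf{Main obstacle.} The hardest step is this Fubini-type interchange, namely showing that $\{y: \{x\in C: y\notin f(x)\}\notin\mathcal{I}\}$ is meager for some positive $C$. Here the closed generators matter. A positive analytic set contains a closed positive set by the dichotomy, and membership of a closed set in a $\sigma$-ideal generated by closed sets is a $\Pi^1_1$-on-$\Sigma^1_1$ condition. This lets us apply the Kuratowski--Ulam theorem. Since this is the folklore result of \cite{zapletal}, I would ultimately cite it there, giving only the verification that $\mathcal{M}_-$ is $\sigma$-generated by closed sets.
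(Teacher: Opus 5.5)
The paper gives no proof of this Fact; it is quoted from \cite{zapletal}. Your final decision to cite Zapletal's theorem, together with your correct check that $\mathcal{M}_-$ is $\sigma$-generated by the closed sets $\{y:\forall n\ge k\; y\upharpoonright I_n\neq x\upharpoonright I_n\}$, is therefore exactly the paper's route. The sketch you wrap around the citation, however, would not work as a proof, and you should not present it as one.

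\textbf{Positive closed subsets.} You claim that every $\mathcal{I}$-positive Borel set contains an $\mathcal{I}$-positive closed set, and you use this for the horizontal sections and for shrinking $B$ to a closed set on which $f$ is continuous. At the generality of the Fact this is false. Take the $\sigma$-ideal $\mathcal{E}$ on $2^\omega$ generated by closed null sets. A dense $G_\delta$ null set is $\mathcal{E}$-positive, since it is comeager and every set in $\mathcal{E}$ is meager. Yet all its closed subsets are closed null sets, hence in $\mathcal{E}$. In general one only gets positive $G_\delta$ subsets (Solecki). For $\mathcal{M}_-$ you could invoke the tree dichotomy, but then you are no longer proving the Fact.

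\textbf{The Fubini-type interchange.} This is the more serious problem. Kuratowski--Ulam needs category on both coordinates. Knowing that $\{y: B\setminus R^y\in\mathcal{I}\}$ is definable, and so has the Baire property, says nothing about its size when the $x$-coordinate is measured by $\mathcal{I}$. The missing idea in Zapletal's proof is to move category to the $x$-side:
\begin{enumerate}
\item Take a countable elementary submodel $M$ and $r$ Cohen over $M$. The filters on $Bor/\mathcal{I}\cap M$ that contain $B$ and are generic over $M[r]$ form a nonempty Polish space with basic open sets $\{g: C\in g\}$, and $g\mapsto x_g$ is continuous.
\item If the image were in $\mathcal{I}$, it would be covered by countably many closed $F_n\in\mathcal{I}$. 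By Baire category, some preimage contains a basic set $\{g: C\in g\}$ with $C\le B$ in $M$.
\item Then $C\cap O\in\mathcal{I}$ for every basic open $O$ disjoint from $F_n$, so $C\in\mathcal{I}$, a contradiction.
\item By the product lemma, each $x_g$ is $M$-generic with $r$ Cohen over $M[x_g]$. So this positive set of reals yields the required master condition.
\end{enumerate}
The same argument over $M$ alone gives properness; no game is involved.

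\textbf{Minor points.} Handling a single dense open $\dot D$ only shows that ground-model nonmeager sets stay somewhere dense; you need a name for a dense $G_\delta$. Also, the phrase ``disjoint from $\dot D$'s complement'' should read ``disjoint from $\dot D$''.
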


\begin{fact}(\cite{alek})
    If we have countable support iteration  $(\mathbb{P}_\alpha, \dot{\mathbb{Q}}_\alpha; \alpha<\omega_2)$ and for all $\alpha<\omega_2$ we have $\mathbb{P}_\alpha \forces (\dot{\mathbb{Q}}_\alpha =Bor/{\dot{\mathcal{I}}_\alpha} \text{    and } \dot{\mathcal{I}}_\alpha$  has an  $F_\sigma$-basis $)$ then $\mathbb{P}_{\omega_2}$ preserves Baire Category Theorem. 
\end{fact}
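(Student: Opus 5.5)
The natural route is to combine Zapletal's one-step analysis of idealized forcings whose ideal has an $F_\sigma$ basis with a standard preservation theorem for countable support iterations. I will take ``preserves Baire Category'' in its iterable (strong) form. That form says: for every countable elementary submodel $N\prec H_\theta$ containing the relevant parameters, every real $c$ that is Cohen over $N$, and every condition $p\in N$, there is an $(N,\mathbb{P})$-generic $q\le p$ that forces ``$c$ is Cohen over $N[\dot G]$''. This property implies that every non-meager set of ground model reals stays non-meager. The argument then splits into a one-step claim, verification that the hypotheses persist in intermediate models, and the limit-stage preservation theorem.

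First, the one-step claim. If $\mathcal{I}$ is $\sigma$-generated by closed sets, then $Bor/\mathcal{I}$ is proper and has the strong preservation property above. This is the content of the previous Fact from \cite{zapletal}. I would still reprove the strong version, because the iteration theorem needs it and not merely preservation of non-meagerness. Given a Borel $\mathcal{I}$-positive $B\in N$ and $c$ Cohen over $N$, I would shrink $B$ to an $\mathcal{I}$-positive $G_\delta$, or even compact, set $C$ all of whose elements are $N$-generic. For each such element $x$, the pair $(x,c)$ should then be generic for the product of the quotient with Cohen forcing.

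The key point is that $F_\sigma$ generators let us run the positivity argument ``locally''. If $C$ were to meet no Cohen-over-$N[x]$ configuration, then the set of $x\in C$ for which $c$ lands in a given nowhere dense set coded in $N[x]$ would be covered by countably many closed sets from $\mathcal{I}$. This uses the fact that the set of $x$ forcing $c$ into a given closed nowhere dense set is closed relative to $C$, combined with the genericity of $c$ over $N$. I expect this to be the main obstacle. The delicate part is arranging the interaction between the Borel-code evaluation and the closed generators so that the bad set really is $F_\sigma$ in $\mathcal{I}$. As a first attempt I would use Zapletal's game characterization of the $F_\sigma$-basis case: Player II wins the game producing $\mathcal{I}$-positive compact sets, and I would interleave the Cohen real $c$ into the fusion.

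Second, persistence. In each intermediate model $V^{\mathbb{P}_\alpha}$, the ideal $\dot{\mathcal{I}}_\alpha$ has an $F_\sigma$ basis by hypothesis, so the one-step claim applies there. This gives $\mathbb{P}_\alpha\Vdash$ ``$\dot{\mathbb{Q}}_\alpha$ is proper and strongly preserves Baire Category''. Successor stages are then a two-step composition, which is straightforward. We take a generic condition for $\mathbb{P}_\alpha$ keeping $c$ Cohen over $N[G_\alpha]$, and then apply the property of $\dot{\mathbb{Q}}_\alpha$ in $V[G_\alpha]$ to the model $N[G_\alpha]$.

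Finally, limit stages $\delta\le\omega_2$. Here I would invoke the general preservation theorem for countable support iterations of proper forcings preserving $\sqsubseteq^{\mathrm{Cohen}}$, from Goldstern's ``Tools'' framework; in \cite{BBB} it appears as preservation of non-meager sets. The proof builds, along a cofinal $\omega$-sequence $\alpha_n\to\delta$, a sequence of $(N,\mathbb{P}_{\alpha_n})$-generic conditions. At each stage, finitely many additional dense open sets of $N[G_{\alpha_n}]$ are handled, and the Cohen genericity of $c$ lets us avoid the $n$-th nowhere dense set. Fusing these conditions gives an $(N,\mathbb{P}_\delta)$-generic condition forcing $c$ to be Cohen over $N[G_\delta]$.

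Applying this with $\delta=\omega_2$ shows that $\mathbb{P}_{\omega_2}$ preserves Baire Category. In particular, ground-model non-meager sets remain non-meager, and by properness no new reals appear at stage $\omega_2$ that were not added at some earlier stage.
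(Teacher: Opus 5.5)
The paper proves nothing here. The Fact is quoted from \cite{alek}, just as the one-step preservation is quoted from \cite{zapletal} in the preceding Fact. Your architecture is the standard one: a strong one-step Cohen-preservation property for $Bor/\mathcal{I}$, persistence of the hypothesis in $V^{\mathbb{P}_\alpha}$, and the countable support preservation theorem for $\sqsubseteq^{\mathrm{Cohen}}$. Your second and third steps are acceptable as citations.

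The gap is the step you flag yourself, and the mechanism you sketch for it does not work. Positive compact sets are not available in the intended application. A compact $K\subseteq\omega^\omega$ is pointwise bounded by some $g$, so $K\subseteq E_{g+1}\in\mathcal{ED}\subseteq\mathcal{M}_-$. It is also not automatic that $c$ stays Cohen over $N[x]$ for each $N$-generic $x$ in a positive set; already for Cohen forcing an $N$-generic $x$ can compute $c$. So genuine shrinking is required. Finally, the claim that the set of $x$ ``forcing'' $c$ into a nowhere dense set is relatively closed is unjustified. Even if you arranged it via continuous reading of names, it would not show that this set is in $\mathcal{I}$. The genericity of $c$ over $N$ can only be applied to dense sets that belong to $N$, whereas your bad sets are defined from $c$.

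The missing idea is a direct construction through conditions in $N$, and this is exactly where the $F_\sigma$ basis enters. Let $Y$ be the set of $x\in B$ satisfying two conditions. First, $x\in\bigcup(D\cap N)$ for every dense $D\in N$. Second, for every name $\dot O\in N$ for an open dense set, $x$ lies in some $B''\in N$ with $B''\forces[c\upharpoonright n]\subseteq\dot O$ for some $n$. Then $Y$ is Borel. Since $\mathcal{I}$ has an $F_\sigma$ basis, it suffices to show $Y\not\subseteq\bigcup_k F_k$ for closed $F_k\in\mathcal{I}$.

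Enumerate the dense sets $D_k\in N$, including those deciding $\dot{x}_{gen}\upharpoonright k$, and the names $\dot O_k$. Build $B=B_0\geq B_1\geq\cdots$ in $N$ as follows. Since $F_k$ is closed and in $\mathcal{I}$, $B_k\setminus F_k$ is the union of the sets $B_k\cap[s]$ with $[s]\cap F_k=\emptyset$. So one of these is positive, and it lies in $N$. The set $E=\{t:\exists B'\leq B_k\cap[s]\ B'\forces[t]\subseteq\dot O_k\}$ is dense and belongs to $N$. Hence $c\upharpoonright n\in E$ for some $n$, witnessed by some $B'\in N$. Let $B_{k+1}\leq B'$ be a member of $D_k\cap N$.

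The generic real of the filter generated by the $B_k$ lies in every $B_k$, by the forcing theorem in the collapse of $N$ and Borel absoluteness. It misses every $F_k$ and belongs to $Y$. Hence $Y$ is $\mathcal{I}$-positive. As a condition, $Y$ is $(N,Bor/\mathcal{I})$-generic and forces $c$ to be Cohen over $N[\dot G]$. This is precisely the strong form your iteration step requires, and no game or compactness is needed.
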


It follows that countable support iteration of $\mathbb{T}_{\mathcal{M}_-}$ forces the same constelation of Cichoń's diagram as finite support iteration of Cohen forcing.

\begin{cor}
    If $\mathbb{P}^{\omega_2} $ is countable support iteration  of $\mathbb{T}_{\mathcal{M}_-} $ of length $\omega_2$ over a model of $CH$ then $\mathbb{V}^{\mathbb{P}_{\omega_2}} \forces (non(\mathcal{M})=\omega_1)$.
\end{cor}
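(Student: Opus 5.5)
The plan is to show that after a countable support iteration of $\mathbb{T}_{\mathcal{M}_-}$ of length $\omega_2$ over a model of CH, the ground model reals $\omega^\omega\cap\mathbb{V}$ remain non-meager. Since CH holds in $\mathbb{V}$, this is a non-meager set of size $\omega_1$, which gives $non(\mathcal{M})=\omega_1$.

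First I note that $\mathcal{M}_-$ has an $F_\sigma$-basis. Each generator $M(x,\bar{I})$ is $F_\sigma$, being the union over $m$ of the closed sets $\{y:\forall n\geq m\;\; y\upharpoonright I_n\neq x\upharpoonright I_n\}$, and $\mathcal{M}_-$ is $\sigma$-generated by these sets. The definition of $\mathcal{M}_-$ is absolute, so in every intermediate model $\mathbb{V}^{\mathbb{P}_\alpha}$ the same holds. By the dichotomy theorem just proved, $\mathbb{T}_{\mathcal{M}_-}$ is forcing equivalent to $Bor/\mathcal{M}_-$, and this applies in every intermediate model as well. Hence for all $\alpha<\omega_2$ we have $\mathbb{P}_\alpha\forces(\dot{\mathbb{Q}}_\alpha=Bor/\dot{\mathcal{M}}_-$ and $\dot{\mathcal{M}}_-$ has an $F_\sigma$-basis$)$. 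This is exactly the hypothesis of the cited iteration Fact from \cite{alek}, so $\mathbb{P}_{\omega_2}$ preserves the Baire Category Theorem.

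The second step is to read "preserves Baire category" in the sense of \cite{zapletal}: every non-meager set of the ground model remains non-meager in the extension. Equivalently, every Borel meager set coded in the extension misses some ground model real's complement, i.e. cannot cover $\omega^\omega\cap\mathbb{V}$. Applying this to $\omega^\omega\cap\mathbb{V}$, which is non-meager in $\mathbb{V}$, it stays non-meager in $\mathbb{V}^{\mathbb{P}_{\omega_2}}$. Since $|\omega^\omega\cap\mathbb{V}|=\omega_1$ by CH and $\omega_1$ is preserved (the iteration is proper by the remark following the dichotomy), $non(\mathcal{M})\leq\omega_1$. The reverse inequality is trivial.

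The main obstacle is making sure the iteration Fact really gives preservation of non-meagerness of $\mathbb{V}\cap\omega^\omega$, and not only some weaker property. If the Fact is phrased only for single steps, I would fall back on the standard preservation theorem for "preserving $\sqsubseteq^{Cohen}$", that is, preservation of non-meager sets under countable support iterations of proper forcings. I would check it at each iterand via the Fact that $Bor/\mathcal{I}$ for $\mathcal{I}$ with an $F_\sigma$-basis preserves Baire category, and then invoke the limit-stage preservation theorem of Shelah in Bartoszyński–Judah form.
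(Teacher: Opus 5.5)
Your proposal is correct and follows the same route as the paper. The paper's argument, left implicit after the two Facts, is: $\mathcal{M}_-$ has an $F_\sigma$-basis, the dichotomy makes $\mathbb{T}_{\mathcal{M}_-}$ forcing-equivalent to $Bor/\mathcal{M}_-$ in every intermediate model, the iteration Fact then gives preservation of Baire category, and under CH the set $\omega^\omega\cap\mathbb{V}$ is a non-meager set of size $\omega_1$ in the extension. Your ``equivalently'' in the second paragraph is really only an implication, but that direction is the only one your argument uses.
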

\printbibliography
\end{document}